\DeclareMathAlphabet{\mathsf}{OT1}{\sfdefault}{m}{n}
\newcommand{\nocontentsline}[3]{}
\newcommand{\tocless}[2]{\bgroup\let\addcontentsline=\nocontentsline#1{#2}\egroup}
\def\dual#1{\expandafter\dual@aux#1\@nil}
\def\dual@aux#1/#2\@nil{\begin{tabular}{@{}c@{}}#1\\#2\end{tabular}}
\DeclareMathAlphabet{\amathbb}{U}{bbold}{m}{n}
\theoremstyle{plain}
\newtheorem{theorem}{Theorem}[section]
\newtheorem{lemma}[theorem]{Lemma}
\newtheorem{corollary}[theorem]{Corollary}
\newtheorem{proposition}[theorem]{Proposition}
\theoremstyle{definition}
\newtheorem{example}[theorem]{Example}
\newtheorem{definition}[theorem]{Definition}
\theoremstyle{remark}
\numberwithin{equation}{section}
 \def\QQ{\mathbb{Q}}
 \def\A{\mathcal{A}}
 \newcommand{\GF}[1]{\mathbb{F}_{#1}}
 \DeclareMathOperator\GL{GL}
 \DeclareMathOperator\PSL{PSL}
 \DeclareMathOperator\PGL{PGL}
 \DeclareMathOperator\SL{SL}
 \DeclareMathOperator{\GU}{GU}
 \DeclareMathOperator{\PGU}{PGU}
 \DeclareMathOperator{\PSU}{PSU}
 \DeclareMathOperator{\PSp}{PSp}
 \DeclareMathOperator{\GO}{GO}
 \DeclareMathOperator{\Spin}{Spin}
 \DeclareMathOperator{\HSpin}{HSpin}
 \DeclareMathOperator{\POm}{P\Omega}
 \DeclareMathOperator\Sz{Sz}
 \DeclareMathOperator{\Sym}{Sym}
 \DeclareMathOperator{\Alt}{Alt}
 \DeclareMathOperator{\Cyclic}{C}
 \DeclareMathOperator{\Dihedral}{D}
 \DeclareMathOperator\Aut{Aut}
 \DeclareMathOperator\Out{Out}
 \DeclareMathOperator\Inn{Inn}
 \DeclareMathOperator\Outdiag{Out diag}
 \DeclareMathOperator\Inndiag{Inn diag}
 \newcommand{\tq}{\mathrel{{\ensuremath{\: : \: }}}}
 \def\Sp{\mathcal{S}_p} 
 \def\Ap{\mathcal{A}_p}
 \DeclareMathOperator\Outposet{{O}}
 \def\HO{\hat{\Outposet}_2}
 \def\QD{(\mathcal{Q}\mathcal{D})}
 \newcommand{\normal}{\trianglelefteq}
 \def\groupiso{\cong}
 \newcommand\gen[1]{\left\langle#1\right\rangle}
\title{Maximal subgroups of exceptional groups and Quillen's dimension}
\author[Kevin I. Piterman]{Kevin I. Piterman*}
\address{Philipps-Universit\"at Marburg \\
   Fachbereich Mathematik und Informatik \\
   35032 Marburg, Germany}
\email{piterman@mathematik.uni-marburg.de}
\thanks{*Supported by a postdoctoral fellowship of the Alexander von Humboldt Foundation.}
\begin{document}

\begin{abstract}
Given a finite group $G$ and a prime $p$, let $\Ap(G)$ be the poset of nontrivial elementary abelian $p$-subgroups of $G$.
The group $G$ satisfies the Quillen dimension property at $p$
if $\Ap(G)$ has non-zero homology in the maximal possible degree, which is the $p$-rank of $G$ minus $1$.
For example, D. Quillen showed that solvable groups with trivial $p$-core satisfy this property, and later, M. Aschbacher and S.D. Smith provided a list of all $p$-extensions of simple groups that may fail this property if $p$ is odd.
In particular, a group $G$ with this property satisfies Quillen's conjecture: $G$ has trivial $p$-core and the poset $\Ap(G)$ is not contractible.

In this article, we focus on the prime $p = 2$ and prove that the $2$-extensions of the exceptional finite simple groups of Lie type in odd characteristic satisfy the Quillen dimension property, with only finitely many exceptions.
We achieve these conclusions by studying maximal subgroups and usually reducing the problem to the same question in small linear groups, where we establish this property via counting arguments.
As a corollary, we reduce the list of possible components in a minimal counterexample to Quillen's conjecture at $p = 2$.
\end{abstract}

\subjclass[2010]{20G41, 20D20, 20D30, 05E18.}

\keywords{$p$-subgroups, exceptional groups of Lie type, Quillen's conjecture.}

\maketitle


\section{Introduction}

Since the early 70s, there has been a growing interest in the $p$-subgroup posets and their connections with finite group theory, the classification of the finite simple groups, finite geometries, group cohomology and representation theory.
The poset $\Sp(G)$ of nontrivial $p$-subgroups of a group $G$ was introduced by Kenneth Brown in \cite{Brown}.
In that paper, Brown worked with the Euler characteristic $\upchi(G)$ of groups $G$ satisfying certain finiteness conditions and established connections between the $p$-fractional part of $\upchi(G)$ and the $p$-subgroup structure of $G$.
One of the consequences of his results is the commonly known ``Homological Sylow theorem'', which states that the Euler characteristic of $\Sp(G)$ is $1$ modulo $|G|_p$, the order of a Sylow $p$-subgroup of $G$.

Some years later, Daniel Quillen introduced the poset $\Ap(G)$ of nontrivial elementary abelian $p$-subgroups of a finite group $G$ and exhibited several applications of the topological properties of these posets \cite{Qui78}.
Indeed, the study of elementary abelian $p$-subgroups goes back to Quillen's earlier work on the Bredon cohomology of $G$-spaces and his proof of the Atiyah-Swan conjecture, that relates the Krull dimension of a ring to the dimension of $\Ap(G)$ (see \cite{Qui71}).

In \cite{Qui78}, Quillen showed that $\Sp(G)$ and $\Ap(G)$ are ($G$-equivariantly) homotopy equivalent, and provided a new proof of Brown's result.
In fact, when $G$ is the set of rational points of a semisimple algebraic group over a finite field of characteristic $p$, these posets are homotopy equivalent to the building of $G$ and, hence, they have the homotopy type of a wedge of spheres of dimension $l-1$, where $l$ is the rank of the underlying algebraic group.
Furthermore, in that case, the homology $\widetilde{H_*}(\Ap(G))$ affords the classical Steinberg module for $G$.

Quillen also exhibited other connections between intrinsic algebraic properties of $G$ and the topology of these posets.
For instance, he showed that $\Ap(G)$ is disconnected if and only if $G$ contains a strongly $p$-embedded subgroup.
Recall that the classification of the groups with this property is indeed one of the many important steps towards the classification of the finite simple groups (see for example Section 7.6 of \cite{GLS98}).

On the other hand, Quillen proved that if $G$ has a fixed point on $\Ap(G)$ (or, equivalently on $\Sp(G)$), then these posets are contractible.
Note that $G$ has a fixed point if and only if its $p$-core $O_p(G)$ is nontrivial.
In view of this and further evidence, Quillen conjectured that the reciprocal to this statement should hold.
That is, if $\Ap(G)$ is contractible then there is a fixed point, or, equivalently, $O_p(G)\neq 1$ (see Conjecture 2.9 of \cite{Qui78}).
In other words, Quillen's conjecture asserts that $\Ap(G)$ is contractible if and only if $O_p(G)\neq 1$.

A significant part of Quillen's article is devoted to proving the solvable case of this conjecture.
In \cite{Qui78} it is shown that for a $p$-nilpotent group $G$ with abelian Sylow $p$-subgroups and $O_p(G) = 1$, $\Ap(G)$ is homotopy equivalent to a nontrivial wedge of spheres of the maximal possible dimension, which is $m_p(G) - 1$, the $p$-rank of $G$ minus $1$.
Then, if $G$ is any solvable group with $O_p(G) = 1$, $G$ contains a $p$-nilpotent subgroup $O_{p'}(G)A$, with $A \in\Ap(G)$ of maximal $p$-rank and $O_p(O_{p'}(G)A) = 1$, and thus $\widetilde{H}_{m_p(G)-1}(\Ap(G))\neq 0$.

Later, Michael Aschbacher and Stephen D. Smith formalised this property and gave a name to it: an arbitrary group $G$ with $\widetilde{H}_{m_p(G)-1}(\Ap(G))\neq 0$ is said to satisfy the \textit{Quillen dimension property at $p$}, or $\QD_p$ for short (see \cite{AS93}).
Therefore, a solvable group $G$ with $O_p(G) = 1$ satisfies $\QD_p$ and thus Quillen's conjecture.
Furthermore, it was shown that $p$-solvable groups satisfy this property by using Quillen's techniques and, in addition, the CFSG (see \cite{AD,Smi}).
These results also suggest that a stronger statement of the conjecture may hold: if $O_p(G) = 1$ then $\widetilde{H}_*(\Ap(G);\QQ)\neq 0$.
Therefore, from now on, by Quillen's conjecture we will be referring to this stronger version.

It is not hard to see that not every group $G$ with $O_p(G) = 1$ satisfies $\QD_p$.
For example, we mentioned that finite groups of Lie type in characteristic $p$ satisfy the conjecture, but since the Lie rank is usually strictly smaller than the $p$-rank, they fail $\QD_p$.
This has led to the development of new methods to prove Quillen's conjecture.
One of the most notorious advances in the conjecture was achieved by Aschbacher-Smith in \cite{AS93}.
There, they established Quillen's conjecture for a group $G$ if $p > 5$ and in addition, roughly, all the $p$-extensions of finite unitary groups $\PSU_n(q)$, with $q$ odd and $p \mid q+1$, satisfy $\QD_p$ (see Main Theorem of \cite{AS93} for the precise statement). Here, a $p$-extension of a group $L$ is a split extension of $L$ by an elementary abelian $p$-subgroup of $\Out(L)$.
In \cite{AS93} it is not shown that the group $G$ satisfies $\QD_p$.
Instead, they proved that if every $p$-extension of a fixed component of $G$ satisfies $\QD_p$, then $\widetilde{H}_*(\Ap(G);\QQ)\neq 0$ if $O_p(G) = 1$ (under suitable inductive hypotheses).
This result restricts the possibilities of the components of a minimal counterexample to Quillen's conjecture: every component has a $p$-extension failing $\QD_p$.
In view of this result and the classification of the finite simple groups, Aschbacher and Smith described for $p\geq 3$, all the possible $p$-extensions of simple groups which may potentially fail $\QD_p$.
This is the $\QD$-List, Theorem 3.1, of \cite{AS93}.
Moreover, it is conjectured in \cite{AS93} that the unitary groups $\PSU_n(q)$ with $q$ odd and $p\mid q+1$ should not appear in this list, and so the extra hypothesis on the unitary groups in the main result of \cite{AS93} could be omitted.
Nevertheless, this problem remains open (see \cite{PW} for recent results in this direction).

In the last few years, there have been further developments in the Quillen conjecture \cite{P, PSV, PS1, PS2}.
Recently, in \cite{PS2}, new tools for the study of the conjecture have been provided.
For example, it is shown that the Aschbacher-Smith general approach to the conjecture can be extended to \textit{every} prime $p$ by reducing reliance on results of \cite{AS93} stated only for odd primes and invoking the Classification.
In particular, Theorem 1.1 of \cite{PS2} shows that the Main Theorem of \cite{AS93} extends to $p\geq 3$, keeping the additional constraint on the unitary groups.
On the other hand, for $p = 2$, one important obstruction for this extension is the lack of a $\QD$-List for this prime.
Roughly, Corollary 1.8 of \cite{PS2} concludes that a minimal counterexample to Quillen's conjecture contains a component of Lie type in characteristic $r \neq 3$, and every component of $G$ has a $2$-extension failing $\QD_2$.

In view of these results on Quillen's conjecture, in this article, we focus on showing that the $2$-extensions of the exceptional finite simple groups of Lie type in odd characteristic satisfy $\QD_2$, with a small number of exceptions.
This improves the conclusions of \cite{PS2} on Quillen's conjecture for $p = 2$, and allows us to conclude then that exceptional groups of Lie type in odd characteristic different from $3$ cannot be components of a minimal counterexample to the conjecture (see Corollary \ref{coro:QCprime2} below).

The main result of this article is the following theorem, whose proof is given in different propositions in Section \ref{sec:QDexcp}.

\begin{theorem}
\label{thm:main}
Let $L$ be an exceptional finite simple group of Lie type in odd characteristic.
That is, $L = {}^3D_4(q)$, $F_4(q)$, $G_2(q)$, ${}^2G_2(q)'$, $E_6(q)$, ${}^2E_6(q)$, $E_7(q)$ or $E_8(q)$, with $q$ odd.
Then every $2$-extension of $L$ satisfies the Quillen dimension property at $p = 2$, except possibly in the following cases:
\begin{center}
${}^3D_4(9) $ extended with field automorphisms;
$F_4(3)$; $F_4(9) $ extended with field automorphisms;\\
$2$-extensions of $G_2(3)$; $G_2(9)$ extended with field automorphisms; ${}^2G_2(3)'$;\\
$E_8(3)$; $E_8(9)$ extended with field automorphisms.
\end{center}
\end{theorem}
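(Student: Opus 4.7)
The plan is to verify $\QD_2$ family by family across the eight exceptional types via a uniform reduction. For each $2$-extension $G$ of $L$, the goal is to produce a subgroup $H = K \rtimes A \leq G$ with $A$ an elementary abelian $2$-subgroup of rank $m_2(G)$, with $K$ of odd order, and with no non-identity element of $A$ centralising all of $K$. By Quillen's theorem on solvable groups with trivial $p$-core applied to $H$, one has $\widetilde{H}_{m_2(H)-1}(\mathcal{A}_2(H)) \neq 0$; since $A$ attains the maximal $2$-rank of $G$, a top-dimensional class for $\mathcal{A}_2(H)$ survives to give a nonzero class in $\widetilde{H}_{m_2(G)-1}(\mathcal{A}_2(G))$.

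To carry this out, I would first invoke the known description of maximal subgroups of exceptional groups (Borel--de Siebenthal, together with the Liebeck--Seitz catalogue of reductive maximal-rank subgroups) to locate, for each simple $L$ and each $2$-extension $G$, a large subgroup $M \leq G$ containing an elementary abelian $2$-subgroup of full rank. The natural candidates are subsystem subgroups of type $A_1^n$ (for instance $(\SL_2(q))^4$ inside $F_4(q)$, $(\SL_2(q))^7$ inside $E_7(q)$, and $(\SL_2(q))^8$ inside $E_8(q)$), subgroups of type $D_n$ or $A_n$, and their twisted analogues for ${}^3D_4$ and ${}^2E_6$. The $2$-rank and centraliser-of-involution data compiled in Gorenstein--Lyons--Solomon guide the choice.

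Next, inside $M$ I would locate a small linear subquotient $L_0$ (typically $\GL_n(q)$, $\PGL_n(q)$ or a product of copies of $\PSL_2(q)$ for small $n$) in which $A$ sits together with an odd-order complement $K$ drawn from a suitable torus of $L_0$. Since the list of $L_0$ that can appear is finite and bounded in rank, the problem reduces to establishing $\QD_2$ in a bounded family of small linear groups. For each such $L_0$, the fixed-point condition can be checked by decomposing $K$ under the action of $A$ into cyclic pieces coming from the cyclotomic factors of $q \pm 1$, then counting that no $1 \neq a \in A$ lies in the kernel of any component; this is the ``counting argument in small linear groups'' alluded to in the abstract.

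The main obstacle is Step 2: producing, in each family, a small linear subquotient $L_0$ that simultaneously carries an elementary abelian $2$-subgroup of the full $2$-rank of $G$ and an odd-order complement of the required form. This is expected to fail exactly over the smallest fields, where $q - 1$ or $q + 1$ has too little odd content to supply a $K$ of the right size, producing the exceptions ${}^2G_2(3)'$, $G_2(3)$, $F_4(3)$, $E_8(3)$; analogously, adjoining a field involution to $L(9)$ raises $m_2(G)$ by one just enough that no single small $L_0$ inside $M$ can accommodate both $A$ and the complement, giving the remaining exceptions for $G_2(9)$, ${}^3D_4(9)$, $F_4(9)$, $E_8(9)$. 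Careful bookkeeping of outer automorphisms (graph, diagonal, and field), which can shift both $m_2(G)$ and the available complement inside $M$, is needed throughout, and is the step most likely to generate further case analysis.
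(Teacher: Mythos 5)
There is a genuine gap, and it lies in the core mechanism you propose rather than in the bookkeeping you defer. You want, for every $2$-extension $G$, a $2$-nilpotent subgroup $K\rtimes A$ with $K$ of \emph{odd order drawn from a torus} of a small linear subquotient and $A$ elementary abelian of full rank $m_2(G)$ acting faithfully. Such a configuration frequently does not exist, and in particular it does not exist in exactly the base cases your reduction needs. In ${}^2G_2(q)$ (one of the eight families of the theorem) the Sylow $2$-subgroup is elementary abelian of rank $3$, and inspecting the known maximal subgroups ($2\times\PSL_2(q)$, $(2^2\times\Dihedral_{(q+1)/2}){:}3$, the Borel, the torus normalisers) shows that any rank-$3$ elementary abelian $2$-subgroup centralises the odd part of every odd-order subgroup it normalises; no faithful full-rank $2$-nilpotent configuration exists at all. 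The same failure occurs in $\PSL_2(q)$ and $\PGL_2(q)$: by Dickson's classification, every subgroup containing a four-group is dihedral, $\Sym_4$-like, $\Alt_5$, or a smaller $\PSL_2/\PGL_2$, and in each of these the ``toral'' involution of the four-group centralises the odd part of the normalised cyclic group. This is the structural reason your cyclotomic counting cannot be made to work: when $A$ has full $2$-rank, much of that rank is toral, and toral involutions centralise the torus, so $C_A(K)\neq 1$ for any toral $K$; the obstruction is not the odd content of $q\pm 1$. Yet $\PGL_2(q)$ is indispensable for the paper's treatment of $F_4(q)$ and $E_8(q)$ (via factors $\PGL_2(q)\times G_2(q)$ and $\PGL_2(q)\times\Inndiag(E_6(q))\gen{\gamma}$), and graph extensions of $\PSL_3(q)$ and $\PSU_3(q)$ are indispensable for $G_2(q)$.

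What actually makes the argument go through in the paper is a combination of two things your sketch does not supply. First, when a $B$-invariant parabolic $P$ realises the $2$-rank (the $E_6$ and $E_7$ cases, and $\PSL_n(q)$ with $n$ odd plus field involutions), the odd-order kernel is the \emph{unipotent radical}, and faithfulness is free from the Borel--Tits corollary $C_{\Aut(L)}(O_r(P))\leq O_r(P)$; no torus and no counting of cyclotomic pieces is involved. Second, where no full-rank parabolic or full-rank maximal subgroup with known $\QD_2$ exists --- precisely the base cases $\PSL_2(q)$, $\PGL_2(q)$, graph and graph-field extensions of $\PSL_3(q)$, $\PSU_3(q)\gen{\gamma}$, ${}^2G_2(q)$, and $G_2(q)$ extended by a graph-field involution (where Kleidman's list shows every relevant maximal subgroup is $2$-local or of too small $2$-rank) --- the paper resorts to genuinely homological counting: explicit Euler characteristic formulas for $\A_2$ of these small groups together with the long exact sequence relating $\widetilde{H}_*(\A_2(L))$, $\widetilde{H}_*(\A_2(LB))$ and the centralisers $\widetilde{H}_*(\A_2(C_L(t)))$ of outer involutions, comparing dimensions to force a nonzero top class. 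Your proposal contains no substitute for this machinery, and without it neither the small linear base cases nor ${}^2G_2(q)$ nor the $G_2(3^{2a+1})\gen{t}$ graph-field extension can be handled, so the reduction strategy cannot be completed as stated.
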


Indeed, the extensions of $G_2(3)$, $G_2(9)$ and ${}^2G_2(3)'$ mentioned above do fail $\QD_2$ by Example \ref{ex:exceptionsG2} and Proposition \ref{prop:caseRee}.

To achieve the conclusions of Theorem \ref{thm:main}, in most cases we exhibit a maximal subgroup $M$ of a $2$-extension $LB$ of $L$ such that $m_2(M) = m_2(LB)$ and $M$ satisfies $\QD_2$.
Since there is an inclusion $\widetilde{H}_{m_2(LB)-1}(\A_2(M)) \hookrightarrow \widetilde{H}_{m_2(LB)-1}(\A_2(LB))$ in the top-degree homology groups, this establishes $\QD_2$ for $LB$ (see Lemma \ref{lm:inclusion}).
In some cases, the subgroup $M$ arises from suitable parabolic subgroups.
More concretely, when it is possible, we pick $P$ to be a maximal parabolic subgroup of $L$ which is stabilised by $B$ and such that $M := PB$ realises the $2$-rank of $LB$.
Then we get a $2$-nilpotent configuration $UA$, where $U$ is the unipotent radical of $P$, $A$ is an elementary abelian $2$-subgroup realising the $2$-rank of $PB$, and $O_2(UA) = C_A(U) = 1$ by one of the corollaries of the Borel-Tits theorem.
Hence, by Quillen's results on the solvable case, $UA$ satisfies $\QD_2$, and thus also $M$ and $LB$.

When the choice of such parabolic $P$ is not possible, we pick one of the maximal rank subgroups of $L$.
Here, the components of the maximal subgroup $M$ are usually smaller exceptional groups, low-dimensional linear group $A_1(q)$ and $A_2(q)$ or unitary groups ${}^A_2(q)$.
Therefore, we first prove that the $2$-extensions of simple linear and unitary groups in dimensions $2$ and $3$ satisfy $\QD_2$.

Although there is a large literature on maximal subgroups of exceptional groups of Lie type, we will only need the results from \cite{CLSS, KG2q, LSS, LS, LS2}.

Finally, from Theorem \ref{thm:main} and the results of \cite{PS2} for $p = 2$, we can conclude:

\begin{corollary}
\label{coro:QCprime2}
Let $G$ be a minimal counterexample to Quillen's conjecture for $p = 2$.
Then $G$ contains a component of Lie type in characteristic $r \neq 3$.
Moreover, every such component fails $\QD_2$ in some $2$-extension and belongs to one of the following families:
\[ \PSL_n(2^a) (n\geq 3), D_n(2^a) (n\geq 4), E_6(2^a),\]
\[ \PSL^{\pm}_{n} (q) (n\geq 4), B_n(q) (n\geq 2), C_n(q) (n\geq 3), D_n^{\pm}(q) (n\geq 4),\]
where $q = r^a$ and $r > 3$.
\end{corollary}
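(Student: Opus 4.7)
The plan is to package Theorem \ref{thm:main} together with the reduction already obtained in \cite{PS2} for the prime $p=2$. I will first invoke Corollary 1.8 of \cite{PS2}, which already guarantees that a minimal counterexample $G$ to Quillen's conjecture at $p=2$ must contain a component of Lie type in some characteristic $r \neq 3$, and moreover that \emph{every} component of $G$ admits some $2$-extension failing $\QD_2$. This delivers the opening sentence of the corollary for free and reduces the remaining task to enumerating which simple groups of Lie type can admit such a $2$-extension.

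Next, Theorem \ref{thm:main} eliminates all exceptional simple groups of Lie type in odd characteristic as potential components: its conclusion is that every $2$-extension of such a group satisfies $\QD_2$, apart from the short explicit list given there, and crucially every entry in that list is defined over a field of characteristic $3$. Since the defining characteristic of a component of $G$ is constrained to $r \neq 3$ by the first part of Corollary 1.8 of \cite{PS2}, none of these exceptions can arise. Thus any exceptional component of $G$ that survives must be defined over a field of characteristic $2$, and the exceptional families $^3D_4(q)$, $F_4(q)$, $G_2(q)$, $^2G_2(q)'$, $^2E_6(q)$, $E_7(q)$, $E_8(q)$ with $q$ odd are removed from consideration.

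The remaining work is to read off the precise families from the characteristic-$2$ and low-rank analysis carried out in \cite{PS2}: among exceptional characteristic-$2$ groups only $E_6(2^a)$ can survive, while the classical possibilities in characteristic $2$ reduce to $\PSL_n(2^a)$ with $n\geq 3$ and $D_n(2^a)$ with $n\geq 4$, and in odd characteristic $r>3$ to the families $\PSL^{\pm}_n(q)$, $B_n(q)$, $C_n(q)$, $D_n^{\pm}(q)$ with the stated rank restrictions; the low-rank cases $\PSL_2(q)$ and $\PSL_3^{\pm}(q)$, as well as the small-rank orthogonal and symplectic groups, are already known to satisfy $\QD_2$ in every $2$-extension and are thereby ruled out. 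The principal obstacle is therefore essentially bookkeeping: carefully aligning the short list of exceptions in Theorem \ref{thm:main} with the reduction catalogue of \cite{PS2}, rather than any new topological or group-theoretic argument, since all the analytic work has already been done in Theorem \ref{thm:main} and in \cite{PS2}.
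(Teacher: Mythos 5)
Your first two steps coincide with the paper's implicit argument: Corollary 1.8 of \cite{PS2} supplies the existence of a component of Lie type in characteristic $r\neq 3$ and the fact that every component fails $\QD_2$ in some $2$-extension, and Theorem \ref{thm:main} strikes the odd-characteristic exceptional groups from the candidate list because all of its exceptions are defined over fields of characteristic $3$. (One small imprecision: Corollary 1.8 of \cite{PS2} asserts the existence of \emph{one} component in characteristic $\neq 3$, not that all components have characteristic $\neq 3$; this is harmless here because the ``Moreover'' clause only concerns such components.)

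The genuine gap is in your final paragraph, where you dispose of the low-rank classical cases by asserting that $\PSL_2(q)$ and $\PSL_3^{\pm}(q)$ ``are already known to satisfy $\QD_2$ in every $2$-extension''. That statement is false, and this paper itself proves it is false: Propositions \ref{casePSL2}, \ref{casePSU3} and \ref{casePSLOddDim} establish (E-$\QD$) for these groups only with exceptions at $q=3,5,9$, and those exceptions genuinely fail $\QD_2$ (for instance $\PSL_2(5)$ itself, $\PSL_2(9)\gen{\phi}$, $\PSL_3(3)\gen{\gamma}$, $\PSU_3(3)$ extended by a graph involution). In particular $\PSL_2(5)$ has defining characteristic $5>3$, so it cannot be excluded from the odd-characteristic families by any $\QD$ argument; the paper removes it (and the characteristic-$3$ cases $q=3,9$) by invoking the separate elimination results of \cite{PS2}, as stated immediately after the corollary. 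Moreover, the restriction $n\geq 4$ in $\PSL^{\pm}_n(q)$ for $r>3$ is not something to be ``read off'' from \cite{PS2}: it is exactly the new content supplied by the Section 4 propositions together with that \cite{PS2} elimination, so attributing it to prior literature leaves part of the corollary unproven. Finally, your claim that ``small-rank orthogonal and symplectic groups'' are ruled out because they satisfy $\QD_2$ in all $2$-extensions is unsupported and not needed: the bounds $B_n$ $(n\geq 2)$, $C_n$ $(n\geq 3)$, $D_n^{\pm}$ $(n\geq 4)$ reflect the usual conventions avoiding repetitions of low-rank isomorphisms ($B_1\cong C_1\cong A_1$, $C_2\cong B_2$, $D_3\cong A_3$), not a QD computation you would have to supply.
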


The $2$-extensions of $\PSL_2(q)$, $\PSL_3(q)$ and $\PSU_3(q)$ satisfy $\QD_2$ by Propositions \ref{casePSL2}, \ref{casePSU3} and \ref{casePSLOddDim} respectively, with exceptions when $q = 3,5,9$.
Nevertheless, the results of \cite{PS2} eliminate these possibilities from a minimal counterexample.

Further results on the Quillen dimension property at $p = 2$ for the classical groups could be pursued by combining the methods presented in this article with the results of \cite{AD,DRM}.

The paper is organised as follows.
In Section 2, we set the notation and conventions that we will need to work with the finite groups of Lie type.
We also provide some useful properties to work out the $p$-extension and compute $p$-ranks.
In Section 3 we gather previous results on the Quillen dimension property and related tools that will help us establish this property.
Then in Section 4 we establish $\QD_2$ for some $2$-extensions of linear groups and recall the structure of the centralisers of graph automorphisms, following Table 4.5.1 of \cite{GLS98}.
In Section 5 we prove each case of Theorem \ref{thm:main}.

All groups considered in this article are finite.
We suppress the notation for the homology coefficients, and we assume that they are always taken over $\QQ$.
The interested reader may note that our results can be extended to homology with coefficients in other rings.
Finally, we emphasise that we adopt the language and conventions of \cite{GLS98}.
This is particularly important when we name the different types of automorphisms of groups of Lie type.
Computer calculations were performed with GAP \cite{GAP4}.

\bigskip

\textbf{Acknowledgements.}
The author thanks Stephen D. Smith for many helpful discussions concerning the algebraic properties of groups of Lie type.
He also thanks Volkmar Welker for his suggestions on a preliminary version of the article.

\section{Preliminaries}
\label{sec:preliminaries}

We assume that the reader is familiar with the construction of the finite groups of Lie type as fixed points of Steinberg endomorphisms, and the basic properties concerning root systems of reductive algebraic groups.
We will follow the language of \cite{GLS98}, which also contains the required background on finite groups of Lie type.
In this section, we will only recall some notations and names, and state results that will be used later.

We denote by $\Cyclic_n$, $\Dihedral_n$, $\Sym_n$ and $\Alt_n$ the cyclic group of order $n$, the dihedral group of order $n$, the symmetric group on $n$ points and the alternating group on $n$ points.

If $G$ is a group, then $\Aut(G)$, $\Inn(G)$ and $\Out(G)$ denote the automorphism group, the group of inner automorphisms and the outer automorphism group of $G$ respectively.
We denote by $Z(G)$ the centre of $G$.
We usually write $G:H$, or simply $GH$, for a split extension of $G$ by $H$.
When an extension of $G$ by $H$ may not split, we denote it by $G.H$.
By an element $g$ (resp. a subgroup $B$) of $G$ inducing outer automorphisms on $L\leq G$ we mean that $g$ embeds into $\Aut(L) \setminus \Inn(L)$ (resp. $B$ embeds in $\Aut(L)$ with $B\cap \Inn(L) = 1$).
Finally, $H \circ_m K$ denotes a central product of $H$ and $K$ by a central cyclic subgroup of order $m$.
That is, $H \circ_m K = (H\times K)/\Cyclic_m$ where $\Cyclic_m$ embeds into both $Z(H)$ and $Z(K)$.

We will usually use the notation $n$ in a group extension to denote a cyclic group of order $n$, and $n^m$ a direct product of $m$ copies of cyclic groups of order $n$.
A number between brackets $[n]$ in the structure description of an extension means some group of order $n$.

In this article, we are mainly interested in extensions by elementary abelian groups.
Below we recall the definition of $p$-extension given in the introduction and introduce some useful notation.

\begin{definition}
\label{def:pextensionAndPouters}
Let $L$ be a finite group and $p$ a prime number.
A $p$-extension of $L$ is a split extension $LB$ of $L$ by an elementary abelian $p$-group $B$ inducing outer automorphisms on $L$.

If $L\leq G$, we denote by $\Outposet_G(L)$ the poset of elements $B\in\Ap(G)$ such that $B\cap LC_G(L) = 1$.
Thus $\Outposet_G(L)$ is the set of $B\in\Ap(G)$ inducing outer automorphisms on $L$.
We write $\Outposet_2(L)$ for $\Outposet_{\Aut(L)}(L)$ at $p = 2$.
We also let $\hat{\Outposet}_G(L) = \Outposet_G(L)\cup \{1\}$ and $\HO(L) = \Outposet_2(L)\cup \{1\}$.
\end{definition}

\begin{definition}
\label{def:quillenDimension}
For a prime number $p$, we say that a group $G$ satisfies the \textit{Quillen dimension property at $p$} if $\Ap(G)$ has non-zero homology in dimension $m_p(G)-1$, where $m_p(G)$ denotes the $p$-rank of $G$:

\vspace{0.2cm}
$\QD_p$ \quad $\widetilde{H}_{m_p(G)-1}(\Ap(G))\neq 0$.
\end{definition}

A remarkable study of the Quillen dimension property for odd primes $p$ was carried out in Theorem 3.1 of \cite{AS93}.
This theorem contains a list of the potential $p$-extensions of simple groups that might fail $\QD_p$, for $p\geq 3$.
In particular, this list contains the $p$-extensions of unitary groups $\PSU_n(q)$ with $q$ odd and $p\mid q+1$.
However, Conjecture 4.1 of \cite{AS93} basically claims that these groups should not belong to this list.
In fact, it is shown there that if $n < q(q-1)$ then these $p$-extensions satisfy $\QD_p$.
Nevertheless, this problem remains open.

\medskip

The aim of this article is to achieve some progress on a similar list for the prime $p = 2$.
Therefore, we will focus on showing that $2$-extensions of certain simple groups satisfy $\QD_2$.
To that end, we introduce the following convenient definition.

\begin{definition}
A group $L$ satisfies (E-$\QD$) if every $2$-extension of $L$ satisfies $\QD_2$:

\vspace{0.2cm}

(E-$\QD$) \quad For every $B\in\HO(L)$, $LB$ satisfies $\QD_2$.
\end{definition}

In order to establish $\QD_p$ for $p$-extensions, it is crucial to be able to compute $p$-ranks of extensions.
The following result, extracted from Lemma 4.2 in \cite{PSV}, will be a useful tool to compute $p$-ranks of extensions.

\begin{lemma}
[$p$-rank of extensions]
\label{lm:prankExtensions}
Let $G = N.K$ be an extension of finite groups, and let $p$ be a prime number.
Then
\[m_p(G) =  \max_{A\in\mathcal{S}} \bigl( m_p(C_N(A)) + m_p(A) \bigr) ,\]
where $\mathcal{S} = \{A\in \Ap(G)\cup \{1\} \tq A\cap N = 1\}$.
In particular, $m_p(G)\leq m_p(N)+m_p(K)$.
\end{lemma}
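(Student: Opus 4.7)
The plan is to prove the two inequalities separately, with both directions being essentially elementary once one exploits that elementary abelian $p$-groups split over any subgroup.

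For the inequality $m_p(G) \geq \max_{A\in\mathcal{S}}(m_p(C_N(A))+m_p(A))$, I would fix $A\in\mathcal{S}$ and let $E$ be a maximal elementary abelian $p$-subgroup of $C_N(A)$, so that $|E| = p^{m_p(C_N(A))}$. Since $A$ and $E$ are elementary abelian $p$-groups with $[A,E] = 1$ and $A\cap E \leq A\cap N = 1$, the product $EA = E\times A$ is an elementary abelian $p$-subgroup of $G$ of rank $m_p(C_N(A)) + m_p(A)$, giving the bound.

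For the converse, I would take a maximal elementary abelian $p$-subgroup $F\leq G$ with $|F| = p^{m_p(G)}$, and set $F_0 := F\cap N$. Because $F$ is an $\FF_p$-vector space, $F_0$ admits an $\FF_p$-linear complement $A\leq F$, i.e.\ $F = F_0 \oplus A$. Then $A\cap N = A\cap F_0 = 1$, so $A\in \mathcal{S}$. Moreover, $F_0 \leq F$ is centralised by $A\leq F$ (as $F$ is abelian), so $F_0 \leq C_N(A)$, which yields $m_p(C_N(A)) \geq m_p(F_0)$. Combining with $m_p(A) = m_p(F) - m_p(F_0)$ gives
\[ m_p(C_N(A)) + m_p(A) \geq m_p(F_0) + m_p(F) - m_p(F_0) = m_p(G), \]
as desired.

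For the final inequality $m_p(G)\leq m_p(N)+m_p(K)$, I would simply note that $m_p(C_N(A))\leq m_p(N)$, and that since $A\cap N = 1$ the quotient map $G\twoheadrightarrow G/N \cong K$ embeds $A$ into $K$, so $m_p(A) \leq m_p(K)$. Applying these bounds to the $A\in\mathcal{S}$ realising the maximum yields the conclusion. There is no real obstacle here; the only subtlety is the use of the splitting $F = F_0 \oplus A$, which is why the statement is specific to elementary abelian rather than arbitrary abelian $p$-subgroups.
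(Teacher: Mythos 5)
Your proof is correct: both inequalities are established without gaps, the key point being the elementary observation that an elementary abelian group $F$ realising $m_p(G)$ splits as $F=(F\cap N)\oplus A$ with $A\cap N=1$ and $F\cap N\leq C_N(A)$, and the reverse direction via the product $E\times A$ with $E\leq C_N(A)$ maximal. The paper itself gives no proof of this lemma (it imports it as Lemma 4.2 of \cite{PSV}), and your argument is the standard one that proves the cited statement, so there is nothing of substance to compare.
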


We will implicitly use this result at many points of the proofs.
Note that, in order to apply this lemma, we should be able to compute centralisers of elementary abelian $2$-subgroups, usually inducing outer automorphisms.
We will often proceed as follows: if $LB$ is a $2$-extension of $L$, then take a suitable decomposition $B = B_0 \oplus B_1$, with $|B_1| = 2$.
Suppose that we can inductively compute the $2$-rank of $LB_0$.
Then, by Lemma \ref{lm:prankExtensions}, we have
\begin{equation}
    \label{eq:standardComputation2rank}
     m_2(LB) = \max \left\{ m_2(LB_0), 1 + m_2(C_{LB_0}(t)) \tq t \in LB\setminus LB_0 \text{ is an involution} \right\}.
\end{equation}
Moreover, this computation depends only on the conjugacy classes of the involutions $t$, and, in most of the cases that we are interested in, such classes are completely classified.

\medskip

Now we recall, rather informally, the names of the different \textit{types} of automorphisms of a simple group of Lie type $K$ defined over a field of odd characteristic, following Definition 2.5.13 of \cite{GLS98}.
We refer to \cite{GLS98} for the full details.
Let $t\in \Aut(K)$ be an involution and $K^* = \Inndiag(K)$.
Then we have the following names for $t$:
\begin{enumerate}
    \item inner-diagonal if $t\in K^*$;
    \item inner if $t\in \Inn(K)$;
    \item diagonal if $t\in K^* \setminus \Inn(K)$;
    \item field automorphism if $t \in \Aut(K)\setminus K^*$ is $\Aut(K)$-conjugated to a field automorphism of the ground field and $K$ is not ${}^2A_n(q)$, ${}^2D_n(q)$ or ${}^2E_6(q)$;
    \item graph if $t \in \Aut(K)\setminus K^*$, roughly, is $\Aut(K)$-conjugated to an involution arising as an automorphism of the underlying Dynkin diagram (except for $K = G_2(q)$), or else from a field automorphism in cases ${}^2A_n(q)$, ${}^2D_n(q)$ and ${}^2E_6(q)$; and
    \item graph-field automorphism if it can be expressed as a product $gf$ of a graph involution $g$ and a field automorphism $f$, or else  $K = G_2(q)$ and $t$ arises from a $\Aut(K)$-conjugated of an involution automorphism of the underlying Dynkin diagram.
\end{enumerate}

It follows from Proposition 4.9.1 of \cite{GLS98} that the centralisers of field involutions $t$ verify that $m_2(C_K(t)) = m_2(K)$ and $m_2(C_{K^*}(t)) = m_2(K^*)$.
By Eq. (\ref{eq:standardComputation2rank}), we see that $m_2(K\gen{t}) = m_2(K)+1$.
Below we reproduce a simplified version of this proposition.

\begin{proposition}
\label{prop:centFieldAut}
Let $K = {}^d\Sigma(q)$ be a group of Lie type in adjoint version in characteristic $r$, and let $x$ be a field or graph-field automorphism of prime order $p$.
Set $K_x = O^{r'}(C_K(x))$.
Then the following hold:
\begin{enumerate}
    \item If $x$ is a field automorphism then $K_x \groupiso {}^d\Sigma(q^{1/p})$.
    \item If $x$ is a graph-field automorphism then $d=1$, $p=2$ or $3$, and $K_x \groupiso {}^p\Sigma(q^{1/p})$.
    \item $K_x$ is adjoint and $C_{\Inndiag(K)}(x) \groupiso \Inndiag(K_x)$.
    \item Fields (resp. graph-field) automorphism are all $\Inndiag(K)$-conjugated, except for graph-fields for $K = D_4(q)$ and $p=3$.
\end{enumerate}
\end{proposition}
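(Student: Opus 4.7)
The plan is to pass to the ambient simple algebraic group and invoke the Lang-Steinberg formalism. Let $\bar K$ be the simple adjoint algebraic group over $\overline{\FF}_r$ of root-system type $\Sigma$, and let $\sigma$ be the Steinberg endomorphism of $\bar K$ whose fixed-point group is $\bar K^\sigma \groupiso \Inndiag(K)$; the twist $d$ records whether $\sigma$ incorporates a non-trivial diagram automorphism. First I would lift $x$ to an endomorphism $\hat x$ of $\bar K$ commuting with $\sigma$ and satisfying $\hat x^p = \sigma$: when $x$ is a field automorphism, $\hat x$ is the $q^{1/p}$-power Frobenius of the same twisted type as $\sigma$, and when $x$ is a graph-field automorphism, $\hat x$ combines this Frobenius with a non-trivial symmetry of the Dynkin diagram of $\Sigma$. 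Such a diagram symmetry of order $p$ exists only in the untwisted situation $d=1$ and forces $p\in\{2,3\}$, with $p=3$ only for $\Sigma = D_4$; this already yields the structural restrictions in (2).

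To establish (1) and (2), the key observation is that $C_K(x) = \bar K^{\hat x}$: every $\hat x$-fixed point is automatically $\sigma$-fixed because $\hat x^p = \sigma$. Since $\hat x$ is itself a Steinberg endomorphism of $\bar K$ of index $q^{1/p}$, its fixed-point group is a finite group of Lie type attached to $\Sigma$ with parameter $q^{1/p}$ and twist prescribed by $\hat x$. Reading off this twist produces ${}^d\Sigma(q^{1/p})$ in the field case and ${}^p\Sigma(q^{1/p})$ in the graph-field case, and applying $O^{r'}$ to both sides yields the asserted isomorphism for $K_x$.

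For (3), I would use that $\bar K$ is adjoint as an algebraic group, so $\bar K^{\hat x}$ is the adjoint finite group of Lie type of the identified type, whence $K_x$ itself is adjoint. The same argument applied to the larger group $\Inndiag(K) = \bar K^\sigma$ shows that the $\hat x$-centraliser inside $\bar K^\sigma$ is exactly $\bar K^{\hat x} \groupiso \Inndiag(K_x)$, which is (3). For (4), I would appeal to Lang-Steinberg on the action of $\bar K$ by conjugation on the set of endomorphisms of a fixed type: any two such $\hat x_1,\hat x_2$ differ by an inner automorphism of $\bar K$, and Lang-Steinberg lets me choose a conjugator that is $\sigma$-fixed, giving $\Inndiag(K)$-conjugacy of $x_1$ and $x_2$.

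The main obstacle, and the source of the $D_4$ exception in (4), is that the diagram-symmetry group of $D_4$ is $\Sym_3$, which contains two $\Sym_3$-conjugacy classes of elements of order $3$; the corresponding families of graph-field lifts are interchanged by an outer automorphism of $\bar K$ which does not lie in $\Inndiag(K)$, so one obtains two $\Inndiag(K)$-orbits. Careful bookkeeping is also required when $d>1$, where one must verify that the commutation of $\hat x$ with the pre-existing graph twist of $\sigma$ produces a consistent Steinberg endomorphism of the claimed twisted type; this is precisely the point at which one invokes the explicit classification of Steinberg endomorphisms in \cite{GLS98}.
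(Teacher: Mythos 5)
A preliminary remark for calibration: the paper does not prove this statement at all --- it is reproduced, in simplified form, from Proposition 4.9.1 of \cite{GLS98} --- so what you are really doing is reconstructing the argument behind the citation. Your overall strategy (lift $x$ to a Steinberg endomorphism $\hat x$ of the adjoint algebraic group $\bar K$ with $\hat x^p=\sigma$, observe $\bar K^{\hat x}\leq \bar K^{\sigma}$, identify the centraliser with $\bar K^{\hat x}$, and apply $O^{r'}$) is indeed the standard Lang--Steinberg route underlying that reference, and parts (1)--(3) are essentially right up to two small points: the fixed-point identification should read $C_{\Inndiag(K)}(x)=\bar K^{\hat x}$ rather than $C_K(x)=\bar K^{\hat x}$, since $K=O^{r'}(\bar K^{\sigma})$ may be proper in $\bar K^{\sigma}$ (the discrepancy disappears only after applying $O^{r'}$); and for a non-standard $x$ the existence of a commuting lift with $\hat x^p=\sigma$ should be reduced to the standard representative by noting that (1)--(3) are invariant under $\Aut(K)$-conjugacy.

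The genuine gap is in part (4). The pivotal step ``any two such $\hat x_1,\hat x_2$ differ by an inner automorphism of $\bar K$'' is unjustified and, taken literally, cannot be right: conjugation by $\Inndiag(K)$ (indeed by all of $\Aut(K)$) preserves the image of $x$ in $\Out(K)/\Outdiag(K)$, so before any Lang--Steinberg argument one must show that the automorphisms in question lie in a common coset of $\Inndiag(K)$ --- automatic for order-$2$ field automorphisms, which is the case the paper actually uses, but this reduction, together with the verification that the conjugating element produced by Lang's theorem can be taken $\sigma$-fixed (i.e.\ in $\Inndiag(K)$), is precisely the technical heart of \cite[\S 4.9]{GLS98} and is dispatched in one sentence in your proposal. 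Moreover, your explanation of the $D_4$, $p=3$ exception is wrong on both counts: $\Sym_3$ has a \emph{single} conjugacy class of order-$3$ elements (the two $3$-cycles are conjugate by a transposition), and in any case the observation that two families of lifts are interchanged by an automorphism outside $\Inndiag(K)$ is perfectly consistent with their forming a single $\Inndiag(K)$-orbit, so it does not establish that there are two classes. The correct mechanism is the coset invariance just described: the lifts built from $\rho$ and $\rho^{2}$ have distinct images in $\Out(K)/\Outdiag(K)\groupiso\Sym_3\times\Cyclic_a$, hence cannot be fused by $\Inndiag(K)$, while Lang--Steinberg yields a single class over each image; this is what produces exactly two classes for $D_4(q)$ with $p=3$. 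As written, then, your proposal is a correct outline of the strategy of the cited result, but part (4) --- both the general fusion statement and the $D_4$ exception --- is not actually proved.
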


The previous proposition does not determine, a priori, the structure of $C_K(x)$, but just of the centraliser taken over the inner-diagonal automorphism group.
Since we are interested in computing $m_2(C_K(x))$, it will be crucial for us to decide when a diagonal involution can centralise a field or graph-field automorphism $x$.
We recall below Lemma 12.8 of \cite[Ch. 17]{GLS8}, which provides a partial solution to this problem.

\begin{lemma}
\label{lm:fieldAndDiag}
Let $K \groupiso \PSL_2(q), \POm_{2n+1}(q), \PSp_{2n}(q)$ or $E_7(q)$, where $q$ is a power of an odd prime $r$.
Let $\phi$ be a field automorphism of order $2$, and let $K_{\phi} = O^{r'}(C_K(\phi))$.
Then $\Inndiag(K_{\phi}) = C_{\Inndiag(K)}(\phi) = C_{\Inn(K)}(\phi)$.
In particular, $\phi$ does not commute with diagonal involutions of $\Inndiag(K)$.
\end{lemma}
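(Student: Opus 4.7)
The identification $\Inndiag(K_\phi) \cong C_{\Inndiag(K)}(\phi)$ is already supplied by Proposition \ref{prop:centFieldAut}(3), so the nontrivial content is the second equality $C_{\Inndiag(K)}(\phi) = C_{\Inn(K)}(\phi)$, i.e. $C_{\Inndiag(K)}(\phi) \leq \Inn(K) = K$. Writing $q = q_0^2$, this amounts to showing that the natural embedding of $\Inndiag(K_\phi) \cong C_{\Inndiag(K)}(\phi)$ into $\Inndiag(K)$ factors through $K$. Since the four families all have $|Z(\tilde{K})| = \gcd(2, q_0-1) = 2$, the group $\Outdiag(K_\phi)$ is cyclic of order $2$, and it suffices to realise a single outer diagonal involution $d \in \Inndiag(K_\phi) \setminus K_\phi$ as an inner automorphism of $K$.

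For each family I would use the standard algebraic/matrix description of $d$: a coset of a non-square-determinant scalar for $\PSL_2$; a nontrivial spinor-norm coset in $\SO_{2n+1}$ for $\POm_{2n+1}$; a symplectic similitude with non-square similitude factor for $\PSp_{2n}$; and, for $E_7$, a representative of the generator of $H^1(F_0, Z(\mathbf{E}_7^{sc}))$. In every case the obstruction to $d$ lifting to $\tilde{K}_\phi$ (equivalently, to $d$ being inner) is a class in $\FF_{q_0}^\times/(\FF_{q_0}^\times)^2$. The key field-theoretic ingredient is the elementary identity $\FF_{q_0}^\times \subseteq (\FF_{q_0^2}^\times)^2$: for $a \in \FF_{q_0}^\times$ one has $a^{(q_0^2-1)/2} = (a^{q_0-1})^{(q_0+1)/2} = 1$, since $q_0$ is odd. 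Applied to $d$ viewed inside the ambient group over $\FF_{q_0^2}$, the obstruction class collapses, so $d$ lifts to $\tilde{K}$ and therefore acts on $K$ as an inner automorphism. This gives $\Inndiag(K_\phi) \leq K$ and hence both equalities. The final ``in particular'' clause is then immediate: a diagonal involution $d \in \Inndiag(K)\setminus \Inn(K)$ centralising $\phi$ would lie in $C_{\Inndiag(K)}(\phi) \setminus \Inn(K)$, contradicting the equality just established.

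The main obstacle will be the $E_7$ case, since there is no workable matrix realisation of the outer diagonal. I would handle it cohomologically, via the Lang--Steinberg exact sequence
\[ 1 \to Z(\mathbf{G}_{sc})^{F_0} \to \mathbf{G}_{sc}^{F_0} \to \mathbf{G}_{ad}^{F_0} \to H^1(F_0, Z(\mathbf{G}_{sc})) \to 1 \]
and the functoriality of this sequence with respect to the inclusion $\mathbf{G}^{F_0} \hookrightarrow \mathbf{G}^{F}$ with $F = F_0^2$, tracking the connecting class under the norm map on $\mu_2 \cong Z(\mathbf{E}_7^{sc})$. Alternatively, one can simply read off the precise structure of $C_K(\phi)$ directly from Table 4.5.1 and Proposition 4.9.1 of \cite{GLS98}. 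The three classical families admit transparent matrix verifications following exactly the template of the $\PSL_2$ computation.
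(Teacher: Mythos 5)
Your plan is essentially correct, but it follows a genuinely different route from the paper, because the paper gives no proof at all: the statement is quoted verbatim as Lemma 12.8 of \cite[Ch.~17]{GLS8}, just as Proposition \ref{prop:centFieldAut} is quoted from \cite{GLS98}. Your sketch supplies a self-contained argument in its place: after using Proposition \ref{prop:centFieldAut}(3) (and the $\Inndiag(K)$-conjugacy of order-$2$ field automorphisms, so that $C_{\Inndiag(K)}(\phi)$ may be identified with the naturally embedded copy of $\Inndiag(K_\phi)$), the whole content is that the unique outer-diagonal coset of $\Inndiag(K_\phi)$ becomes inner over the quadratic extension. Your two justifications of this are both valid and really the same fact: concretely, the obstruction is a class in $\GF{q^{1/2}}^{\times}$ modulo squares and $\GF{q^{1/2}}^{\times}\subseteq(\GF{q}^{\times})^{2}$ since $a^{(q-1)/2}=(a^{q^{1/2}-1})^{(q^{1/2}+1)/2}=1$; cohomologically, $Z(\mathbf{G}_{sc})\groupiso\mu_2$ with trivial Frobenius action in all four types, so the connecting class of an $F_0$-fixed element under $F=F_0^{2}$ is $z\,F_0(z)=z^{2}=1$ and the element lifts to $\mathbf{G}_{sc}^{F}$, hence is inner. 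This uniform argument even explains why the hypothesis list is exactly these types and why, say, $\PSL_n(q)$ with $n\geq 3$ must be excluded (compare item (ii) in the proof of Proposition \ref{casePSLOddDim}); what the paper's citation buys is brevity and no need to set up the Lang--Steinberg sequence. Two small cautions: the element $d$ need only be a coset representative, not an involution; and your fallback for $E_7$ --- ``read off $C_K(\phi)$ from Table 4.5.1 and Proposition 4.9.1 of \cite{GLS98}'' --- does not work, since those sources determine only $C_{K^*}(\phi)$ and not $C_K(\phi)$; closing exactly that gap is the point of the lemma, as the paper notes immediately before stating it. Stick with the norm/Lang--Steinberg (or explicit square-class) argument.
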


We will mainly work with Table 4.5.1 of \cite{GLS98} to compute the $2$-ranks of extensions by diagonal and graph involutions, mostly for the groups of type $A^{\pm}_m(q)$ and the exceptional groups.
In the next paragraph, we briefly and informally describe how to read such table.
See \cite[pp. 171-182]{GLS98} for a complete and accurate description of Table 4.5.1.

This table records the $K^*$-conjugacy classes of inner-diagonal and graph involutions $t$ of a finite group of Lie type $K$ in adjoint version, and the structure of their centralisers when taken over $K^* = \Inndiag(K)$.
The centraliser of an involution $t$ is denoted by $C^* = C_{K^*}(t)$.
The first column of Table 4.5.1 denotes the family for which the involutions are listed ($A_n$, $B_n$, $C_n$, etc.)
The second column indicates the restrictions for these classes to exist, while the third column is a label for the conjugacy class of that involution.
For the purposes of this article, we will not need to interpret the fourth column.
In the fifth column, it is indicated when such classes are of inner type (denoted by $1$), diagonal type (denoted by $d$) or graph type (several notations like $g,g'$).
The notation $1/d$ indicates that it is inner if the condition inside the parentheses at the right holds, and it is diagonal otherwise.
From the sixth column to the end, the structure of the centraliser $C^*$ is described.
Roughly, $C^*$ is an extension of a central product of groups of Lie type $L^* = O^{r'}(C^*)$ (column six), whose versions are specified in the column ``version'' and whose centres can be recovered from the column $Z(L^*)$.
An extra part centralising this product can be computed from the column $C_{C^{*\circ}}(L^*)$.
Here $C^{*\circ} = L^*T^*$ is the connected-centraliser, where $T^*$ is a certain $r'$-subgroup arising from a torus $T$ normalised by $t$ and inducing inner-diagonal automorphisms on $L^*$.
From the columns $L^*$, version, $Z(L^*)$ and $C_{C^{*\circ}}(L^*)$, one can compute the ``inner-part'' of $C^{*\circ}$.
Finally, from the last two columns we can recover the outer automorphisms of $L^*$ arising in $C^{*\circ}$ (in general of diagonal type) and the remaining part of $C^*/C^{*\circ}$, which is often an involution acting on the components of $L^*$ (as field or graph automorphism, or by switching two components) and on the central part $C_{C^*}(L^*)$ (which is usually cyclic and the involution acts by inversion).
To recover the action of the last column, the symbols $i$, $\leftrightarrow$, $\phi$, $\gamma$, $1$ mean, respectively, an action by inversion, a swap of two components, a field automorphism of order $2$, a graph automorphism of order $2$, and an inner action on a component or trivial action on $C_{C^{*\circ}}(L^*)$.

\section{Tools to achieve \texorpdfstring{$\QD_p$}{QDp}}

In this section, we provide tools and collect results that will help us to establish $\QD_2$ on certain $2$-extensions.
Many of these tools were introduced and exploded by Aschbacher-Smith to determine the $\QD$-list in \cite{AS93}.

The following proposition is an easy consequence of the K{\"u}nneth formula for the join of spaces and the fact that $\Ap(H\times K) \simeq \Ap(H) * \Ap(K)$ (see \cite[Prop. 2.6]{Qui78}).

\begin{proposition}
\label{prop:productJoinQDp}
If $p$ is a prime and $H,K$ satisfy $\QD_p$, then $H\times K$ satisfies $\QD_p$.
\end{proposition}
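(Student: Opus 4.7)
The plan is to combine two standard ingredients: Quillen's identification $\Ap(H\times K)\homotequiv \Ap(H)\join\Ap(K)$ already cited in the excerpt, and the K\"unneth formula for the join. Working with rational coefficients (as the paper does throughout), the K\"unneth formula takes the particularly clean form
\[
\widetilde{H}_{n}\bigl(X\join Y\bigr)\;\cong\;\bigoplus_{i+j=n-1}\widetilde{H}_{i}(X)\otimes_{\QQ}\widetilde{H}_{j}(Y),
\]
with no Tor correction since we are over a field.

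First I would record the elementary fact that $p$-ranks are additive on direct products, namely $m_p(H\times K)=m_p(H)+m_p(K)$; this is immediate because an elementary abelian $p$-subgroup of $H\times K$ has the form $A\times B$ with $A\in\Ap(H)\cup\{1\}$ and $B\in\Ap(K)\cup\{1\}$. Set $h=m_p(H)$, $k=m_p(K)$, and $n=h+k-1$, which is then the target dimension for $\QD_p$ on $H\times K$.

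Next I would apply Quillen's homotopy equivalence to rewrite $\widetilde{H}_{n}(\Ap(H\times K))$ as $\widetilde{H}_{n}(\Ap(H)\join\Ap(K))$, and then use the K\"unneth formula above. Choosing the summand with $i=h-1$ and $j=k-1$ (which satisfies $i+j=n-1$), one finds that
\[
\widetilde{H}_{h-1}(\Ap(H))\otimes_{\QQ}\widetilde{H}_{k-1}(\Ap(K))
\]
appears as a direct summand of $\widetilde{H}_{n}(\Ap(H\times K))$. By the hypothesis that $H$ and $K$ both satisfy $\QD_p$, each tensor factor is nonzero, so the tensor product (over a field) is nonzero, which gives $\widetilde{H}_{m_p(H\times K)-1}(\Ap(H\times K))\neq 0$, as required.

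There is no real obstacle here: the only thing to be careful about is the bookkeeping of indices in the K\"unneth formula for the join (one often forgets the $-1$ shift) and the explicit use of field coefficients so that the K\"unneth sequence splits without Tor terms. For other coefficient rings one would keep the same strategy but invoke K\"unneth with its Tor term, which is the observation alluded to in the introduction when the author notes that the results extend to coefficients in other rings.
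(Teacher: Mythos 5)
Your argument is correct and is essentially the paper's own: the paper derives this proposition exactly from Quillen's equivalence $\Ap(H\times K)\homotequiv \Ap(H)\join\Ap(K)$ together with the K\"unneth formula for the join, and your write-up merely supplies the routine details (additivity of $p$-ranks on direct products and the degree bookkeeping) that the paper leaves implicit. Nothing further is needed.
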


The following lemma corresponds to Lemmas 0.11 and 0.12 of \cite{AS93}.

\begin{lemma}
\label{lm:pprimequotient}
Let $N\leq G$ be such that $N\leq O_{p'}(G)$.
Then there is an inclusion
\[\widetilde{H}_*(\Ap(G/N)) \subseteq \widetilde{H}_*(\Ap(G)).\]
In particular, $m_2(G) = m_2(G/N)$, and if $G/N$ satisfies $\QD_p$ then so does $G$.

If $ N \leq Z(G)$, then indeed $\Ap(G) \equiv \Ap(G/N)$.
\end{lemma}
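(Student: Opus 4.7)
The plan is to study the quotient map $\pi\colon G \to G/N$ at the level of elementary abelian $p$-posets, using Schur--Zassenhaus to control its fibres and then the rational transfer for finite group actions to pass to homology. I would define the order-preserving map $\pi_*\colon \Ap(G) \to \Ap(G/N)$ by $A \mapsto AN/N$; since $N$ is a $p'$-subgroup of $G$ and $A$ a $p$-group, one has $A \cap N = 1$, so $AN/N \groupiso A$ and $\pi_*$ preserves rank.

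Next, applying Schur--Zassenhaus to the extension $N \trianglelefteq \pi^{-1}(\bar A) \twoheadrightarrow \bar A$ (a $p'$-group by a $p$-group) yields two crucial properties: (i) every $\bar A \in \Ap(G/N)$ admits a lift $A \in \Ap(G)$, so $\pi_*$ is surjective on objects; and (ii) any two lifts of $\bar A$ are $N$-conjugate, so the fibres of $\pi_*$ coincide with the $N$-orbits on $\Ap(G)$ under the conjugation action. Chains lift too: if $\bar A \leq \bar B$ and a lift $B$ is chosen, then $A := B \cap \pi^{-1}(\bar A)$ is a lift of $\bar A$ sitting inside $B$. Combining these facts, $\pi_*$ descends to a poset isomorphism $\Ap(G)/N \groupiso \Ap(G/N)$. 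The homology statement then follows from the classical averaging argument: whenever a finite group $N$ acts simplicially on a finite simplicial complex $X$ with $|N|$ invertible in $\mathbb{Q}$, the projection $X \to X/N$ induces a natural direct-summand inclusion $\widetilde H_*(X/N) \hookrightarrow \widetilde H_*(X)$, identifying the left side with the $N$-invariants. Applied to $X = \Ap(G)$, this produces the desired inclusion $\widetilde H_*(\Ap(G/N)) \hookrightarrow \widetilde H_*(\Ap(G))$. The equality $m_p(G) = m_p(G/N)$ is immediate from rank-preservation of $\pi_*$ together with the existence of lifts, and the $\QD_p$ implication follows by reading the inclusion in the top degree $m_p(G)-1 = m_p(G/N)-1$.

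For the central case $N \leq Z(G)$, the conjugation action of $N$ on $G$, and hence on $\Ap(G)$, is trivial, so $\Ap(G)/N = \Ap(G)$ literally as posets; combined with the isomorphism above, this refines to $\Ap(G) \equiv \Ap(G/N)$. The only real subtlety in the general argument is matching $N$-orbits with fibres of $\pi_*$, which is precisely the uniqueness half of Schur--Zassenhaus; everything else is formal once the poset isomorphism $\Ap(G)/N \groupiso \Ap(G/N)$ is in place.
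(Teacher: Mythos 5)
Your overall route---lift with Schur--Zassenhaus, identify the fibres of $A \mapsto AN/N$ with the $N$-orbits, then average over $N$ with rational coefficients---is the standard argument behind this lemma; note the paper itself gives no proof and simply cites Lemmas 0.11 and 0.12 of Aschbacher--Smith, which run along essentially these lines. The purely group-theoretic steps you carry out are correct: rank preservation, existence and $N$-conjugacy of the lifts of a fixed $\bar{A}$, lifting of chains by intersecting with a chosen lift of the top term, the resulting poset isomorphism $\Ap(G)/N \groupiso \Ap(G/N)$, the central case, and the deductions about $m_p$ and $\QD_p$.

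There is, however, one genuine gap in the passage to homology. The averaging/transfer statement you invoke computes $\widetilde{H}_*(|\Ap(G)|/N;\QQ)$, the homology of the \emph{orbit space} of the order complex, as the $N$-invariants; what you need is the homology of the order complex of the \emph{orbit poset} $\Ap(G)/N \groupiso \Ap(G/N)$, and these two spaces do not coincide for a general action on a finite poset. For instance, let $P$ be the face poset of the triangulated circle (boundary of a triangle) with $C_3$ acting by rotation: every chain of $P/C_3$ lifts and every element-fibre is a single orbit, yet $|P|/C_3 \simeq S^1$ while $|P/C_3|$ is a single edge, so ``orbit-poset isomorphism plus averaging'' would give the wrong $H_1$. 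What rescues your argument in the present situation is a chain-level statement you never record: two chains $A_0 < \dots < A_k$ and $A_0' < \dots < A_k'$ with the same image in $\Ap(G/N)$ are conjugate by a \emph{single} element of $N$. This follows from your own ingredients: conjugate the top terms by some $n \in N$, and then use that the lift of $\bar{A_i}$ inside the fixed complement $A_k'$ is unique (it is your intersection $A_k' \cap \pi^{-1}(\bar{A_i})$, since $\pi$ is injective on $A_k'$), so $A_i^{\,n} = A_i'$ for all $i$. Once this is in place, the simplex fibres are single $N$-orbits, so $|\Ap(G)|/N$ is identified with $|\Ap(G/N)|$ (equivalently, the chain-level transfer $\bar{\sigma} \mapsto \tfrac{1}{|N|}\sum_{n \in N}\sigma^n$ is well defined and splits $\pi_*$ over $\QQ$), and the rest of your proof goes through as written.
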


The following observation is an easy consequence of the inclusion between the homology groups of top-degree.

\begin{lemma}
\label{lm:inclusion}
Let $H\leq G$ be such that $m_p(H) = m_p(G)$.
If $H$ satisfies $\QD_p$, then so does $G$.
\end{lemma}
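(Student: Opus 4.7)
The plan is to exploit the fact that top-degree cycles in a simplicial complex of dimension $n$ are automatically in homology (there are no $(n+1)$-chains to provide boundaries), combined with the naturality of the inclusion $\Ap(H)\hookrightarrow\Ap(G)$.

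First, I would note that since every nontrivial elementary abelian $p$-subgroup of $H$ is also one of $G$, the inclusion of groups induces an inclusion of posets $\Ap(H)\hookrightarrow\Ap(G)$, hence an inclusion of their order complexes.

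Next, set $d=m_p(H)=m_p(G)$. By definition of the $p$-rank, the longest chain in $\Ap(H)$ (resp.\ $\Ap(G)$) has length $d$, so both order complexes are simplicial complexes of dimension exactly $d-1$. Now for any simplicial complex $X$ of dimension $n$, the reduced chain complex has $\widetilde{C}_{n+1}(X)=0$, so
\[
\widetilde{H}_{n}(X) \;=\; \ker\!\bigl(\partial_{n}\colon \widetilde{C}_{n}(X)\to \widetilde{C}_{n-1}(X)\bigr),
\]
i.e.\ top homology coincides with top cycles.

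Applying this with $n=d-1$ to both $X=\Ap(H)$ and $Y=\Ap(G)$, the chain-level inclusion $\widetilde{C}_{d-1}(\Ap(H))\hookrightarrow\widetilde{C}_{d-1}(\Ap(G))$ commutes with $\partial_{d-1}$, so it restricts to an injection on kernels. Therefore the induced map
\[
\widetilde{H}_{d-1}(\Ap(H)) \;\hookrightarrow\; \widetilde{H}_{d-1}(\Ap(G))
\]
is injective. If $H$ satisfies $\QD_p$, the left-hand side is nonzero, hence so is the right-hand side, which is precisely $\QD_p$ for $G$.

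There is no real obstacle here; the only subtle point is recognising that the equality of $p$-ranks forces both complexes to have the same dimension, which is what makes the top-degree inclusion automatic. (If $m_p(H)<m_p(G)$, top cycles in $\Ap(H)$ could in principle bound in $\Ap(G)$, so the hypothesis on ranks is essential.)
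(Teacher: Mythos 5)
Your argument is correct and is exactly the one the paper has in mind: the lemma is stated there as an immediate consequence of the fact that, since both order complexes have the same dimension $d-1$, top-degree homology is just top-degree cycles, and the chain-level inclusion $\Ap(H)\hookrightarrow\Ap(G)$ therefore injects $\widetilde{H}_{d-1}(\Ap(H))$ into $\widetilde{H}_{d-1}(\Ap(G))$. Your remark that the equality of $p$-ranks is what prevents top cycles of $\Ap(H)$ from bounding in $\Ap(G)$ correctly identifies the only point of substance.
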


Next, we recall one of the essential results on the Quillen dimension property.

\begin{theorem}
[Quillen]
\label{thm:solvable}
If $G$ is a solvable group with $O_p(G) = 1$, then $G$ satisfies $\QD_p$.
\end{theorem}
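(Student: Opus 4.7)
The plan is to reduce to the $p$-nilpotent subcase, which Quillen settles directly in \cite{Qui78}: for any $p$-nilpotent group $H$ with abelian Sylow $p$-subgroups and $O_p(H)=1$, $\Ap(H)$ is homotopy equivalent to a nontrivial wedge of spheres of dimension $m_p(H)-1$. Granted this, it is enough to exhibit inside $G$ a subgroup $H$ that satisfies those hypotheses and realises $m_p(G)$, for then Lemma \ref{lm:inclusion} transfers $\QD_p$ from $H$ up to $G$.

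To build such $H$, I would set $N := O_{p'}(G)$ and pick $A \in \Ap(G)$ with $m_p(A) = m_p(G)$. Since $N \trianglelefteq G$, the subgroup $H := NA$ is well defined, and because $N$ is a normal $p'$-subgroup of $H$ meeting the $p$-group $A$ trivially, we get $H = N \rtimes A$: a $p$-nilpotent group with elementary abelian Sylow $p$-subgroup $A$ of rank $m_p(G)$. The crucial verification is $O_p(H) = 1$. Since $G$ is solvable with $O_p(G)=1$, the Fitting subgroup is precisely $N$, and the classical self-centralising property $C_G(F(G)) \leq F(G)$ of solvable groups gives $C_G(N) \leq N$. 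If $K := O_p(H)$ were nontrivial, then $K \cap N = 1$ for order reasons, so $[K,N] \leq K \cap N = 1$, forcing $K \leq C_H(N) \leq C_G(N) \leq N$; this contradicts $K$ being a nontrivial $p$-subgroup of the $p'$-group $N$.

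With $O_p(H)=1$ in hand, the $p$-nilpotent result applies to $H$ and yields $\widetilde{H}_{m_p(H)-1}(\Ap(H)) \neq 0$. Since $m_p(H) = m_p(A) = m_p(G)$, Lemma \ref{lm:inclusion} then concludes that $G$ satisfies $\QD_p$. The main obstacle is not this reduction, which is formal once the Fitting structure of a solvable group is exploited, but the $p$-nilpotent subcase itself: producing a nonvanishing top-degree class in $\widetilde{H}_*(\Ap(H))$ requires genuine topological input, typically a fibre-type argument that uses the coprime action of $A$ on $N$ to model $\Ap(H)$ by an $A$-invariant poset of $p'$-subgroups of $N$.
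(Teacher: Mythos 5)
Your reduction is exactly the one the paper relies on: the paper does not reprove this theorem but cites \cite{Qui78} (Thm.\ 12.1), and its introduction sketches precisely your argument — pass to the $p$-nilpotent subgroup $O_{p'}(G)A$ with $A\in\Ap(G)$ of maximal rank, check $O_p(O_{p'}(G)A)=1$, and invoke Quillen's wedge-of-spheres result for $p$-nilpotent groups with abelian Sylow $p$-subgroups, then push the top-degree class into $\Ap(G)$. One small correction: $F(G)$ need not equal $O_{p'}(G)$ (the latter need not be nilpotent), only $F(G)\leq O_{p'}(G)$ when $O_p(G)=1$; but since $C_G(O_{p'}(G))\leq C_G(F(G))\leq F(G)\leq O_{p'}(G)$, the containment $C_G(N)\leq N$ you actually use still holds and the rest of your argument goes through unchanged.
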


This theorem settles the solvable case of Quillen's conjecture (see \cite[Thm. 12.1]{Qui78}).
Later, it was extended to the family of $p$-solvable groups by using the CFSG if $p$ is odd.
We refer to Chapter 8 of \cite{Smi} for further details on Quillen's conjecture and the Quillen dimension property.

\bigskip

In view of Theorem \ref{thm:solvable} and the Inclusion Lemma \ref{lm:inclusion}, it is convenient to look for solvable subgroups of $G$ with maximal $p$-rank.
Some standard solvable subgroups in a group of Lie type $L$ arise by taking extensions of unipotent radicals by elementary abelian subgroups of their normalisers.
These extensions lie then inside parabolic subgroups.
The following result on parabolic subgroups will help us to achieve (E-$\QD$) for arbitrary groups of Lie type (cf. Step v at p.506 of \cite{AS93}).

\begin{lemma}
\label{lm:QDfromParabolicWithMxlRank}
Let $L$ be a simple group of Lie type, and $p$ a prime not dividing the characteristic of $L$.
Suppose that $LB$ is a $p$-extension of $L$ and that there exists a $B$-invariant parabolic subgroup $P\leq L$ such that $m_p(LB) = m_p(PB)$.
Then $LB$ satisfies $\QD_p$.
\end{lemma}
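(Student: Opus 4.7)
The plan is to exhibit a solvable subgroup $H \leq LB$ with $m_p(H) = m_p(LB)$ and $O_p(H) = 1$; Quillen's Theorem~\ref{thm:solvable} then yields $\QD_p$ for $H$, and the Inclusion Lemma~\ref{lm:inclusion} transfers the conclusion to $LB$.

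Write $r$ for the defining characteristic of $L$ and set $U = O_r(P)$, the unipotent radical of $P$. First I would choose $A \in \Ap(PB)$ realising the maximal $p$-rank, so that $m_p(A) = m_p(PB) = m_p(LB)$. Since $B$ normalises $P$ by hypothesis and $U$ is characteristic in $P$, the subgroup $A \leq PB$ normalises $U$, so $H := U \rtimes A$ is a well-defined solvable subgroup of $LB$. Because $p \neq r$, coprimality gives $m_p(H) = m_p(A) = m_p(LB)$.

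The substantive step is to verify $O_p(H) = 1$. A short coprime-action calculation inside $U \rtimes A$ will identify $O_p(H)$ with $C_A(U)$: writing $q \in O_p(H) \leq C_H(U)$ uniquely as $q = ua$ with $u \in U$, $a \in A$, the condition that $ua$ centralises $U$ forces $a$ to induce on $U$ the same automorphism as conjugation by $u^{-1}$, and comparing the $p$-power order (coming from $a$) with the $r$-power order (coming from $u$) of this common automorphism forces both to be trivial, whence $a \in C_A(U)$ and $u \in Z(U)$; the $Z(U)$-part is then absorbed since $O_p(H)$ is a $p$-group. The task therefore reduces to showing $C_A(U) = 1$, and this is where I expect the main subtlety to lie: it is a standard corollary of the Borel-Tits theorem, applied in the ambient $\Aut(L)$ rather than inside $L$ itself, that $C_{\Aut(L)}(U)$ contains no nontrivial elements of order prime to $r$. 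Applied to the elementary abelian $p$-group $A$ (with $p \neq r$), this yields $C_A(U) = 1$, hence $O_p(H) = 1$, and the plan above finishes the proof.
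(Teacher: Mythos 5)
Your proposal is correct and follows essentially the same route as the paper: take an elementary abelian subgroup $A\leq PB$ of maximal rank, form the solvable group $U\rtimes A$, use the Borel--Tits corollary in $\Aut(L)$ (the paper cites $C_{\Aut(L)}(U)\leq U$, Corollary 3.1.4 of \cite{GLS98}) to conclude $O_p(UA)=1$, and finish with Quillen's solvable theorem and the inclusion lemma. The only difference is cosmetic: the paper quotes $C_{\Aut(L)}(U)\leq U$ directly and states $O_p(UA)=C_A(U)=1$ without the coprime-order calculation you spell out.
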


\begin{proof}
Let $R:=O_r(P)$, where $r$ is the characteristic of the ground field.
Then, as a consequence of the Borel-Tits theorem, $C_{\Aut(L)}(R)\leq R$  (see Corollary 3.1.4 of \cite{GLS98}).
In particular, if $T\leq PB$ realises the $p$-rank of $PB$, then $T$ normalises $R$, and $C_T(R) \leq R\cap T = 1$.
This means that $T$ is faithful on $R$, i.e. $O_p(RT) = 1$, and $m_p(RT) = m_p(PB) = m_p(LB)$.
Then $RT$ is a solvable group with trivial $p$-core, and by Theorem \ref{thm:solvable}, $RT$ satisfies $\QD_p$.
By Lemma \ref{lm:inclusion}, $LB$ satisfies $\QD_p$.
\end{proof}

\begin{lemma}
\label{lm:QDfromParabolicWith2Sylow}
Let $L$ be a simple group of Lie type defined in odd characteristic.
Suppose that $P$ is a proper parabolic subgroup of $L$ containing a Sylow $2$-subgroup of $L$ (that is, $|L:P|$ is odd).
Then $L$ and the extension of $L$ by a field automorphism of order $2$ satisfy $\QD_2$.
\end{lemma}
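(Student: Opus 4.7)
The plan is to deduce both assertions from Lemma \ref{lm:QDfromParabolicWithMxlRank}, exploiting that $|L:P|$ odd forces $P$ (or a conjugate) to realise the $2$-rank of $L$ and of its field extension.

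\textbf{Handling $L$ itself.} The argument of Lemma \ref{lm:QDfromParabolicWithMxlRank} applies with $B = 1$: odd index gives $m_2(P) = m_2(L)$, so pick an elementary abelian $T \leq P$ of maximal $2$-rank and let $R = O_r(P)$ be the unipotent radical, where $r$ is the defining odd characteristic. The Borel-Tits corollary \cite[Corollary 3.1.4]{GLS98} yields $C_T(R) \leq R \cap T = 1$, so $RT$ is a solvable subgroup of $L$ with $O_2(RT) = 1$ and $m_2(RT) = m_2(L)$. Theorem \ref{thm:solvable} together with Lemma \ref{lm:inclusion} then delivers $\QD_2$ for $L$.

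\textbf{Handling $L\langle \phi \rangle$.} I want to apply Lemma \ref{lm:QDfromParabolicWithMxlRank} with $B = \langle\phi\rangle$, so I must find a $\phi$-invariant parabolic of odd index that realises the $2$-rank of $L\langle\phi\rangle$. Since parabolic subgroups of $L$ are self-normalising, the $L$-conjugacy class of $P$ has cardinality $|L:N_L(P)| = |L:P|$, which is odd. The involution $\phi$ acts on this class with orbits of size $1$ or $2$, so the number of $\phi$-fixed conjugates is congruent to $|L:P| \equiv 1 \pmod 2$ and in particular is positive; replacing $P$ by such a $\phi$-invariant conjugate, I may assume $\phi$ normalises $P$.

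Next, applying Sylow's theorem to the action of $\phi$ on the set of Sylow $2$-subgroups of $P$ (which is odd in number) produces a $\phi$-invariant Sylow $2$-subgroup $S$ of $P$. Because $|L:P|$ is odd, $S$ is also a Sylow $2$-subgroup of $L$; and because $|L\langle\phi\rangle : P\langle\phi\rangle| = |L:P|$ is odd, $S\langle\phi\rangle$ is a Sylow $2$-subgroup of $L\langle\phi\rangle$. Hence
\[ m_2(L\langle\phi\rangle) \;=\; m_2(S\langle\phi\rangle) \;\leq\; m_2(P\langle\phi\rangle) \;\leq\; m_2(L\langle\phi\rangle), \]
so equality holds, and Lemma \ref{lm:QDfromParabolicWithMxlRank} yields $\QD_2$ for $L\langle\phi\rangle$.

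The one substantive point is the orbit-counting step producing a $\phi$-invariant parabolic, which pleasantly sidesteps any case-by-case analysis of how $\phi$ acts on the Dynkin diagram; everything else is a clean combination of Borel-Tits, Sylow's theorem, and the preceding lemmas.
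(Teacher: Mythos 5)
Your proof is correct and follows the same overall skeleton as the paper (reduce everything to Lemma \ref{lm:QDfromParabolicWithMxlRank} after checking that an odd-index parabolic realises the $2$-rank), but it differs in the one step the paper leaves vague: producing a $\phi$-invariant conjugate of $P$. The paper simply asserts this "by passing through algebraic groups and root systems", whereas you obtain it by a parity count on the $L$-class of $P$, which is self-normalising and hence of odd cardinality $|L:P|$, so the involution $\phi$ must fix a member of the class. That is a nice, more elementary replacement — but be aware it is not quite free of structural input: for the counting to start you need $\phi$ to map the $L$-class of $P$ to itself, i.e.\ that a field automorphism preserves the type of a parabolic subgroup. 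This is true because field automorphisms (in the sense of \cite{GLS98}) are $\Aut(L)$-conjugate to standard ones acting trivially on the Dynkin diagram, so they stabilise each class of parabolics; but it is exactly the kind of fact the paper's root-system remark is covering, and it genuinely can fail for other outer involutions (a graph automorphism of $\PSL_3(q)$ swaps the two classes of maximal parabolics), so you should state and justify it rather than silently assuming "$\phi$ acts on this class". Note also that once you invoke the standard-position description of $\phi$ you essentially recover the paper's argument directly, since a standard field automorphism already normalises the standard parabolic of the given type. The remaining steps — the $B=1$ case via Borel--Tits and Quillen's solvable theorem, and the Sylow argument giving $m_2(P\gen{\phi}) = m_2(L\gen{\phi})$ (your construction of a $\phi$-invariant Sylow $2$-subgroup is more than needed: odd index of $P\gen{\phi}$ in $L\gen{\phi}$ already suffices, as in the paper) — are fine.
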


\begin{proof}
Let $L$ and $P$ be as in the hypotheses of the lemma.
Since $P$ has odd index in $L$, it contains a Sylow $2$-subgroup of $L$.
Therefore, $m_2(P) = m_2(L)$ and by Lemma \ref{lm:QDfromParabolicWithMxlRank}, $L$ satisfies $\QD_2$.

Next, let $B\in \HO(L)$ be cyclic inducing field automorphisms.
By passing through algebraic groups and root systems, it can be shown that $B$ normalises some conjugate of $P$, which we may assume is $P$ itself.
Thus, after conjugation, we suppose that $B\leq N_{\Aut(L)}(P)$.
Note that a Sylow $2$-subgroup of $PB$ is a Sylow $2$-subgroup of $LB$, so $m_2(PB) = m_2(LB)$.
By Lemma \ref{lm:QDfromParabolicWithMxlRank}, $LB$ satisfies $\QD_2$.
\end{proof}

We close this section with a few more results on low $p$-ranks.
The following lemma follows from the $p$-rank $2$ case of Quillen's conjecture.
See \cite[Prop. 2.10]{Qui78}.

\begin{lemma}
\label{lm:connectedDim1}
If $\Ap(G)$ is connected, $m_p(G) = 2$ and $O_p(G) = 1$, then $G$ satisfies $\QD_p$.
\end{lemma}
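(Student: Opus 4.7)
The plan is to reduce the statement to the rank-$2$ case of Quillen's conjecture (cited as Proposition 2.10 of \cite{Qui78}, which is the reference the lemma itself points to) combined with an elementary observation about the homology of a one-dimensional complex.

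First I would note that since $m_p(G) = 2$, every chain of nontrivial elementary abelian $p$-subgroups of $G$ has length at most $1$, so the order complex $\Ap(G)$ is a simplicial complex of dimension at most one, i.e.\ a graph. By hypothesis this graph is connected, so its reduced homology is concentrated in degrees $0$ and $1$, with $\widetilde{H}_0(\Ap(G)) = 0$; in particular $\Ap(G)$ is contractible if and only if it is a tree, if and only if $\widetilde{H}_1(\Ap(G)) = 0$.

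Next I would invoke Quillen's Proposition 2.10 of \cite{Qui78}, which establishes the rank-$2$ case of Quillen's conjecture: if $m_p(G) = 2$ and $O_p(G) = 1$, then $\Ap(G)$ is not contractible. Combining this with the previous paragraph, $\Ap(G)$ is a connected graph that is not a tree, so $\widetilde{H}_1(\Ap(G)) \neq 0$. Since $m_p(G) - 1 = 1$, this is exactly the property $\QD_p$.

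There is no real obstacle here: the entire content is packaged in the rank-$2$ result of Quillen that is already assumed, and the remaining step is the general fact that a connected one-dimensional CW complex is homotopy equivalent to a wedge of circles (possibly empty), whose top homology vanishes precisely when the wedge is trivial (a tree). The only thing to be careful about is to record that $\widetilde{H}_0 = 0$ by connectedness so that non-contractibility is detected in degree one and not zero.
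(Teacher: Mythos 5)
Your argument is correct and is essentially the paper's own: the lemma is justified there by citing Quillen's Proposition 2.10 (the $p$-rank $\leq 2$ case of the conjecture), and your additional step — that a connected one-dimensional complex which is not contractible must have $\widetilde{H}_1 \neq 0$ — is exactly the elementary observation left implicit in the paper. No gap; the only point worth recording, as you do, is that connectedness forces non-contractibility to be detected in degree one.
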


It will be convenient to recall the classification of groups with a strongly $2$-embedded subgroup, that is, those groups with disconnected $2$-subgroup poset.
See \cite[Thm. 7.6.1]{GLS98} and \cite[Prop. 5.2]{Qui78}.

\begin{theorem}
\label{thm:connectedA_2list}
Let $p = 2$ and $G$ be a finite group.
Then $\A_2(G)$ is disconnected if and only if $O_2(G)=1$ and one of the following holds:
\begin{enumerate}
\item $m_2(G) = 1$;
\item $\Omega_1(G)/O_{p'}(\Omega_1(G))) \groupiso \PSL_2(2^n), \PSU_3(2^n)$ or $\Sz(2^{2n-1})$ for some $n\geq 2$.
\end{enumerate}
In particular, from the isomorphisms among the simple groups, we see that
\[ \Alt_5 \groupiso  \PSL_2(5) \groupiso  \PSL_2(2^2), {}^2G_2(3)' \groupiso \PSL_2(2^3),\]
are included in the list of item (2).
\end{theorem}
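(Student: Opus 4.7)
The plan is to split the statement into two essentially independent ingredients: Quillen's topological criterion, which equates disconnectedness of $\A_p(G)$ with the existence of a strongly $p$-embedded subgroup in $G$, and Bender's classification of the groups admitting such a subgroup when $p = 2$. Recall that a proper subgroup $M \leq G$ is \emph{strongly $p$-embedded} if $p$ divides $|M|$ while $p$ does not divide $|M \cap M^g|$ for any $g \in G \setminus M$. Before anything else, I would dispose of the case $O_2(G) \neq 1$: as recalled in the introduction, a $G$-fixed point on $\A_p(G)$ forces this poset to be contractible, in particular connected, and $O_p(G)$ provides such a fixed point whenever it is nontrivial. So throughout we may and do assume $O_2(G) = 1$, and the claim becomes an ``if and only if'' between disconnectedness of $\A_2(G)$ and the two alternatives (1)--(2).

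First I would establish the equivalence ``$\A_p(G)$ disconnected $\iff$ $G$ has a strongly $p$-embedded subgroup,'' following \cite[Prop.~5.2]{Qui78}. Given a connected component $\mathcal{C}$ of $\A_p(G)$, let $M$ be its setwise stabiliser in $G$; then $M$ contains the normaliser of every member of $\mathcal{C}$, so $p \mid |M|$, and any $p$-element $x \in M \cap M^g$ forces $\mathcal{C}^g$ to meet $\mathcal{C}$ through $\gen{x}$, whence $g \in M$. Conversely, given a strongly $p$-embedded $M$, the key lemma is that any $A \in \A_p(G)$ with $A \cap M \neq 1$ already lies in $M$: pick $1 \neq a \in A \cap M$, observe $A \leq C_G(a) \leq N_G(\gen{a})$, and rule out $g \in N_G(\gen{a}) \setminus M$ via $\gen{a} \leq M \cap M^g$ combined with strong embedding. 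A one-line induction along comparability chains then shows that the connected component of any $A \in \A_p(M)$ is entirely contained in $M$, while $A^g$ lies outside $M$ for $g \notin M$, so $\A_p(G)$ is manifestly disconnected.

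With the equivalence in hand I would invoke the classification of groups $G$ with $O_2(G) = 1$ admitting a strongly $2$-embedded subgroup --- completed by Bender with contributions from Suzuki, Shult and Aschbacher, and packaged as \cite[Thm.~7.6.1]{GLS98}: such a $G$ either has $m_2(G) = 1$, or else the group $\Omega_1(G)/O_{2'}(\Omega_1(G))$ is isomorphic to one of $\PSL_2(2^n)$, $\PSU_3(2^n)$, or $\Sz(2^{2n-1})$ with $n \geq 2$. Substituting this into the topological equivalence yields exactly the stated dichotomy, and the ``In particular'' clause merely records the standard accidental isomorphisms $\Alt_5 \cong \PSL_2(5) \cong \PSL_2(4)$ and ${}^2G_2(3)' \cong \PSL_2(8)$ from the classification of the finite simple groups.

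The main obstacle is of course Bender's classification itself, a deep result in $2$-local analysis: strong $2$-embedding produces very tight control of both a Sylow $2$-subgroup and the centraliser of any involution, and a long chain of reductions pins $G$ down to the rank-one simple groups of Lie type in characteristic $2$. In a self-contained write-up this step would dominate the length, but for our purposes it is imported as a black box from \cite{GLS98}; the remaining pieces --- the reduction to $O_2(G) = 1$ and Quillen's disconnectedness criterion --- are then essentially formal.
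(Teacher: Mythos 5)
Your proposal matches the paper's treatment: the paper gives no independent proof of this statement, but simply cites Quillen's equivalence between disconnectedness of $\A_p(G)$ and the existence of a strongly $p$-embedded subgroup (\cite[Prop.~5.2]{Qui78}) together with the Bender--Suzuki classification as packaged in \cite[Thm.~7.6.1]{GLS98}, exactly the two ingredients you combine. Your additional sketch of Quillen's criterion and the reduction via $O_2(G)\neq 1$ implying contractibility is correct and consistent with how the paper uses these facts.
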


Indeed, sometimes in low dimensions, we will be able to conclude $\QD_p$ by computing the sign of the Euler characteristic of $\Ap(G)$.
Therefore, we will use the following well-known expression of this invariant.

\begin{proposition}
\label{propEulerCharQuillenPoset}
The reduced Euler characteristic of $\Ap(G)$ is:
\begin{align*}
\widetilde{\upchi}(\Ap(G)) & = \sum_{\overline{E}\in \Ap(G)/G\cup \{1\}} (-1)^{m_p(E)-1} p^{\binom{m_p(E)}{2}} |G:N_G(E)|.
\end{align*}
\end{proposition}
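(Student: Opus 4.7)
The plan is to compute $\widetilde{\upchi}(\A_p(G))$ by stratifying the order complex $\Delta(\A_p(G))$ according to the maximum element of each chain. Recall that an $i$-simplex of $\Delta(\A_p(G))$ is a chain $E_0 < E_1 < \cdots < E_i$ of nontrivial elementary abelian $p$-subgroups, and that
\[
\widetilde{\upchi}(\A_p(G)) = -1 + \sum_{i \geq 0} (-1)^i f_i,
\]
where $f_i$ counts the $i$-simplices. Since every nontrivial subspace of an $E \in \A_p(G)$ again lies in $\A_p(G)$, the open interval $\A_p(G)_{<E}$ equals $\A_p(E)\setminus\{E\}$, i.e.\ the poset of proper nontrivial $\FF_p$-subspaces of $E$.

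The first key step is to record the reduced Euler characteristic of this auxiliary poset. For $E$ of $p$-rank $r$, the poset $\A_p(E)\setminus\{E\}$ is the spherical Tits building of type $A_{r-1}$ over $\FF_p$, and by the Solomon--Tits theorem its order complex has the homotopy type of a wedge of $p^{\binom{r}{2}}$ spheres of dimension $r-2$, giving
\[
\widetilde{\upchi}(\A_p(E)\setminus\{E\}) = (-1)^{r-2}\, p^{\binom{r}{2}}.
\]
(Alternatively this is a direct Möbius-function computation in the subspace lattice that can also be checked by induction on $r$, the base $r=1,2$ being immediate.)

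Next I would assemble the pieces. Every simplex of $\Delta(\A_p(G))$ has a unique maximum $E$, and chains of length $k+1$ with top $E$ are in bijection with chains of length $k$ in $\A_p(E)\setminus\{E\}$ for $k \geq 1$ (with the vertex $\{E\}$ itself corresponding to $k=0$). Grouping by $E$ and reindexing therefore gives
\[
\widetilde{\upchi}(\A_p(G)) \;=\; -1 \,-\, \sum_{E\in\A_p(G)} \widetilde{\upchi}(\A_p(E)\setminus\{E\}) \;=\; -1 \,+\, \sum_{E\in\A_p(G)} (-1)^{m_p(E)-1}\, p^{\binom{m_p(E)}{2}}.
\]
Finally, the sum over $E$ is $G$-invariant, and each conjugacy class $\overline{E}$ contributes $|G:N_G(E)|$ copies of the same summand; and the leading $-1$ is precisely the term $\overline{E}=\{1\}$, since $(-1)^{0-1}p^{\binom{0}{2}}|G:N_G(1)| = -1$. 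Reindexing over $\A_p(G)/G \cup \{1\}$ yields the stated formula. There is no real obstacle here beyond correctly tracking signs and confirming the building's Euler characteristic; the argument is purely combinatorial and does not require any group-theoretic input beyond the action of $G$ on $\A_p(G)$ by conjugation.
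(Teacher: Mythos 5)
Your argument is correct. Note that the paper does not prove this proposition at all: it is quoted as a well-known expression (it goes back essentially to Quillen's and Brown's Euler-characteristic computations), so there is no in-paper proof to compare with. Your derivation is the standard one: stratifying the chains of $\Ap(G)$ by their maximal element, observing that $\Ap(G)_{<E}$ is the poset of proper nontrivial subspaces of $E$, i.e.\ the $A_{r-1}$-building over $\FF_p$ with $\widetilde{\upchi}=(-1)^{r}p^{\binom{r}{2}}$ by Solomon--Tits (equivalently, a M\"obius-function computation in the subspace lattice), and then grouping the resulting sum $-1+\sum_{E}(-1)^{m_p(E)-1}p^{\binom{m_p(E)}{2}}$ into $G$-conjugacy classes, with the trivial class absorbing the $-1$. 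All signs and edge cases ($r=1$, the empty building with $\widetilde{\upchi}=-1$; the term $E=1$ giving $-1$) check out, so the proposal is a complete and valid proof of the stated formula.
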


Finally, the next lemma will help us to produce non-zero homology by inductively looking into the homology of the Quillen poset of a certain normal subgroup and centralisers of outer elements acting on it.
The main reference for this lemma is \cite{SW}.

\begin{lemma}
\label{lm:MVLES}
Let $G$ be a finite group and $p$ a prime number.
Suppose that $L\normal G$ is a normal subgroup such that $\Outposet_G(L)$ consists only of cyclic subgroups.
Then we have a long sequence
\[ \ldots\to \widetilde{H}_{m+1}(\Ap(G)) \to \bigoplus_{B\in \Outposet_G(L)} \widetilde{H}_m(\Ap(C_L(B))) \overset{i_*}{\to} \widetilde{H}_m(\Ap(L)) \overset{j_*}{\to} \widetilde{H}_m(\Ap(G))\to\ldots \]
where $i_*$ and $j_*$ are the natural maps induced by the inclusions $\Ap(C_L(B)) \subseteq \Ap(L)$ and $\Ap(L)\subseteq \Ap(G)$, respectively.

In particular, the following hold:
\begin{enumerate}
\item Let $X$ be the union of the subposets $\Ap(C_N(B))$ for $B\in \Outposet_G(L)$.
We have indeed a factorisation
\begin{equation}
\label{eq:factorisationMV}
\xymatrix{
 \bigoplus_{B\in \Outposet_G(L)} \widetilde{H}_m(\Ap(C_L(B))) \ar[rr]^{i_*} \ar[rd]^{i'_*} & & \widetilde{H}_m(\Ap(L))\\
  & \widetilde{H}_m(X) \ar[ru]^{k_*} &
}
\end{equation}
where also $i'_*$ and $k_*$ are induced by the inclusions $\Ap(C_L(B))\subseteq X$ and $X \subseteq \Ap(L)$, respectively.
\item $m_p(G) \leq m_p(L)+1$.
\item If $\widetilde{H}_{m_p(L)-1}(\Ap(C_L(B))) = 0$ for all $B\in \Outposet_G(L)$, then $H_{m_p(L)}(\Ap(G)) = 0$.
\item We have a bound
\begin{align*}
\dim H_{m_p(L)}(\Ap(G)) & \geq \sum_{B\in \Outposet_G(L)}\dim \widetilde{H}_{m_p(L)-1}(\Ap(C_L(B))) - \dim \widetilde{H}_{m_p(L)-1}(X)\\
& \geq  \sum_{B\in \Outposet_G(L)}\dim \widetilde{H}_{m_p(L)-1}(\Ap(C_L(B))) - \dim \widetilde{H}_{m_p(L)-1}(\Ap(L)).
\end{align*}
\item If $m_p(G) = m_p(L)+1$ and $G$ fails $\QD_p$, then, for $m = m_p(L)-1$, we get inclusions
\[ \bigoplus_{B\in \Outposet_G(L)} \widetilde{H}_m(\Ap(C_L(B))) \hookrightarrow \widetilde{H}_m(X) \hookrightarrow \widetilde{H}_m(\Ap(L)) .\]
\end{enumerate}
\end{lemma}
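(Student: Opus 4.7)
The plan is to produce the long exact sequence from a Mayer--Vietoris sequence attached to a specific cover of $\Ap(G)$, and then to derive (1)--(5) almost formally from it. Following the method of \cite{SW}, I would cover $\Ap(G)$ by the subposet $U = \Ap(L)$ together with the stars $V_B = \{A\in\Ap(G) \tq A\geq B\}$ of the cyclic outer subgroups $B \in \Outposet_G(L)$. The cyclicity hypothesis on $\Outposet_G(L)$ is exactly what makes the pieces fit together cleanly: any $A \in \Ap(G)\setminus U$ projects to a nontrivial elementary abelian subgroup of $G/LC_G(L)$ which, by hypothesis, must be cyclic of order $p$, so there is a distinguished $B \in \Outposet_G(L)$ with $B \leq A$, and the $V_B$'s are pairwise disjoint away from $U$.

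Next I would verify the two geometric inputs to Mayer--Vietoris. First, each $V_B$ is conically contractible with cone point $B$, hence has vanishing reduced homology. Second, $V_B \cap U = \Ap(C_L(B))$: any elementary abelian $A \leq L$ with $A \geq B$ must centralise $B$ and thus lie in $C_L(B)$. With these two observations the Mayer--Vietoris sequence for the cover $\{U\} \cup \{V_B\}_B$ collapses (the $V_B$ pieces contribute nothing in positive degree) to precisely the long exact sequence asserted in the lemma, with $i_*$ induced by the inclusions $\Ap(C_L(B)) \hookrightarrow \Ap(L)$ and $j_*$ by $\Ap(L) \hookrightarrow \Ap(G)$.

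The five consequences then follow by unwinding this sequence. Item (1) is the remark that each $\Ap(C_L(B)) \hookrightarrow \Ap(L)$ factors through the union $X = \bigcup_B \Ap(C_L(B))$, which is immediate from the very definition of $X$. Item (2) is a rank count: for any $A \in \Ap(G)$ the quotient $A/(A\cap LC_G(L))$ is cyclic, so $\rk{A} \leq \rk{A\cap L}+1 \leq m_p(L)+1$. Items (3) and (5) come from reading the long exact sequence in degree $m_p(L)$: the source group for $\widetilde{H}_{m_p(L)}(\Ap(G))$ sits between $\widetilde{H}_{m_p(L)}(\Ap(L))$ (which vanishes by dimension) and $\bigoplus_B \widetilde{H}_{m_p(L)}(\Ap(C_L(B)))$, yielding both the vanishing criterion of (3) and, in the failure case of (5), the claimed inclusions. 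Item (4) is then the standard dimension estimate extracted from exactness, refined by factoring through $X$ as in (1). The main obstacle I expect is book-keeping around the central factor $C_G(L)$ when certifying the cover---in particular, treating elements of $\Ap(G)$ lying in $LC_G(L)\setminus L$---but this reduces, after quotienting by the relevant central $p$-subgroup via Lemma \ref{lm:pprimequotient}, to the same combinatorics already taking place inside $\Ap(L)$, so it should not genuinely obstruct the argument.
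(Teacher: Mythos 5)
Your derivation of items (1)--(5) from the long exact sequence, and the rank count giving (2), are fine and essentially what the paper does (the paper quotes Lemma \ref{lm:prankExtensions} for (2)). The gap is in the construction of the sequence itself, which the paper does not attempt: it simply invokes the main result of \cite{SW}. Your Mayer--Vietoris set-up does not work as described. First, with $V_B=\{A\in\Ap(G)\tq A\geq B\}$ one has $V_B\cap \Ap(L)=\emptyset$, since no $A\leq L$ can contain $B$ (as $B\cap L=1$); so the asserted identification $V_B\cap U=\Ap(C_L(B))$ is incoherent as stated --- what you would need is the larger contractible piece $\Ap(B\times C_L(B))$. Second, and more seriously, the claim that the $V_B$ are ``pairwise disjoint away from $U$'' (and that the $B\leq A$ is distinguished) is false exactly in the cases of interest: if $B=\gen{t}\in\Outposet_G(L)$ and $x\in C_L(t)$ has order $p$, then $A=\gen{x,t}$ contains the two distinct members $\gen{t}$ and $\gen{xt}$ of $\Outposet_G(L)$, so $A$ lies in two of your stars and not in $\Ap(L)$. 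This happens whenever $\Ap(C_L(B))\neq\emptyset$, i.e.\ whenever the sequence has any content. Consequently the cover does not ``collapse'' by ordinary Mayer--Vietoris: grouping the pieces into $\Ap(L)$ and the union of the outer pieces yields at best a sequence involving $\widetilde{H}_m(X)$ and the homology of that union, not the direct sum $\bigoplus_B\widetilde{H}_m(\Ap(C_L(B)))$. The direct-sum form is precisely the finer content of \cite{SW} (equivalent to identifying $H_{m+1}(\Ap(G),\Ap(L))$ with $\bigoplus_B\widetilde{H}_m(\Ap(C_L(B)))$), and it is what makes item (1), the first inequality in (4) and the first inclusion in (5) meaningful; your argument, even repaired, would only deliver the weaker $X$-version.

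The book-keeping around $C_G(L)$ is also not resolved by your appeal to Lemma \ref{lm:pprimequotient}: that lemma concerns subgroups inside $O_{p'}(G)$ (or central $p'$-subgroups), while $C_G(L)$ need not be of that kind, and elements of $\Ap(LC_G(L))$ not lying in $\Ap(L)$ contain no member of $\Outposet_G(L)$ at all, so they are genuinely uncovered. In fact some hypothesis killing the $p$-part of $LC_G(L)/L$ is needed for the sequence (and even for item (2)) to hold, and it is satisfied in all of the paper's applications because there $C_G(L)=1$; but your proposed reduction does not supply it. So the missing ingredient is a correct proof of the exact sequence in its direct-sum form --- the step the paper outsources to \cite{SW} --- rather than the formal deductions that follow it.
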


\begin{proof}
The long exact sequence arises from the main result of \cite{SW}.
Then Eq. (\ref{eq:factorisationMV}) in item (1) is an immediate consequence of this sequence.
Item (2) holds by Lemma \ref{lm:prankExtensions}.
Items (3-5) follow by looking into the last terms of the long exact sequence, at $m = m_p(L)$.
\end{proof}

\section{Some linear groups satisfy \texorpdfstring{$\QD_2$}{QD2}}

In this section, we prove that the linear groups $\PSL_2(q)$ and $\PSL_{3}(q)$ satisfy (E-$\QD$) for every $q$, with a few exceptions for $q=3,5,9$.
These cases will serve as basic cases for the exceptional groups, where we will occasionally find linear groups as direct factors in some of their maximal subgroups.

From \cite[Prop. 4.10.5]{GLS98}, we recall the $2$-ranks of the small dimensional linear groups:

\begin{proposition}
\label{prop:2ranksSmallLinearGroups}
If $q$ is a power of an odd prime and $n=2,3$, then $\PSL_n^\pm(q)$ and $\PGL_n^\pm(q)$ have $2$-rank $2$.
\end{proposition}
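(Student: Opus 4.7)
The plan is to verify each of the eight cases by a direct analysis using the classical structure of Sylow $2$-subgroups together with a centraliser computation in dimension $3$.

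For $n=2$, since $\SU_2(q)\cong\SL_2(q)$ as algebraic groups, the unitary and linear cases coincide, and it suffices to treat $\PSL_2(q)$ and $\PGL_2(q)$. For $q$ odd, a classical computation (from the normaliser of a split or non-split maximal torus) shows that the Sylow $2$-subgroups of $\PGL_2(q)$ and $\PSL_2(q)$ are dihedral groups of orders $2(q^2-1)_2$ and $(q^2-1)_2$, respectively. Since $q$ odd implies $8\mid q^2-1$, these dihedral groups have order at least $4$, and every dihedral group of order $\geq 4$ has $2$-rank exactly $2$.

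For $n=3$, I first obtain the lower bound $m_2\geq 2$ by exhibiting an explicit Klein four subgroup. Take $s_1=\diag(1,-1,-1)$ and $s_2=\diag(-1,1,-1)$; since their entries lie in the prime field, both matrices belong to $\SL_3^{\pm}(q)$ (the unitary condition being trivially satisfied by $\pm 1\in\F_q$). They commute, square to $I$, and remain independent modulo the scalar centre, so they project to a Klein four subgroup in $\PSL_3^{\pm}(q)\leq\PGL_3^{\pm}(q)$.

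For the upper bound $m_2\leq 2$, it suffices to show that the centraliser in $\PGL_3^{\pm}(q)$ of every involution has $2$-rank at most $2$, since any elementary abelian $2$-subgroup is contained in the centraliser of any of its involutions. Lifting an involution $\bar g\in\PGL_3^{\pm}(q)$ to $g\in\GL_3^{\pm}(q)$ yields $g^2=cI\in Z$; because $3$ is odd, the constraint $\det g\in\F_q$ (respectively, $\det g$ norm-$1$ in $\F_{q^2}^{\times}$) forces $c$ to be a square in the relevant scalar group, so after rescaling we may assume $g^2=I$. Then $g$ has eigenvalues $\pm 1$ and is $\GL_3^{\pm}(q)$-conjugate to $\diag(1,1,-1)$ (or $\diag(1,-1,-1)$, which collapses to the same $\PGL$-class after rescaling by $-1$ and permuting). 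Hence every involution in $\PGL_3^{\pm}(q)$ is conjugate to $t=[\diag(1,1,-1)]$, whose centraliser is a central quotient of $\GL_2^{\pm}(q)\times\GL_1^{\pm}(q)$. The $n=2$ case gives $m_2(\GL_2^{\pm}(q))=2$, the cyclic factor $\GL_1^{\pm}(q)$ contributes rank $1$ in the product, and the quotient by the diagonal scalar identification collapses this sum of $3$ back to $2$; therefore $m_2(\PGL_3^{\pm}(q))\leq 2$, and the same bound descends to the simple subgroup $\PSL_3^{\pm}(q)$.

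The principal technical point is the arithmetic bookkeeping in the unitary case during the rescaling step, namely verifying that a norm-$1$ scalar $c$ with $g^2=cI$ admits a norm-$1$ square root in the cyclic subgroup of $\F_{q^2}^{\times}$; the odd-dimensional determinant constraint on $g\in\GU_3(q)$ is precisely what supplies this, and it is the one place where the dimension being $3$ (rather than a general $n$) plays a decisive role.
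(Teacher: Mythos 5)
Your argument is correct in substance, but it takes a genuinely different route from the paper, which does not prove this proposition at all: it simply quotes it from Proposition 4.10.5 of \cite{GLS98}. You instead give a self-contained elementary proof (dihedral Sylow $2$-subgroups for $n=2$; for $n=3$, the rescaling of a lift of any involution to a genuine involution of $\GL_3^{\pm}(q)$, which uses that $3$ is odd, followed by a centraliser bound). What your approach buys is independence from the GLS tables; what the citation buys is uniformity over all $n$ and both signs without matrix bookkeeping. Two points to tighten. First, the Sylow orders you quote are each off by a factor of $2$: they are $(q^2-1)_2$ for $\PGL_2(q)$ and $\tfrac{1}{2}(q^2-1)_2$ for $\PSL_2(q)$ (e.g.\ $\PGL_2(3)\groupiso\Sym_4$ has Sylow $2$-subgroup $\Dihedral_8$); this is harmless, since a dihedral $2$-group of order at least $4$ has $2$-rank exactly $2$ (your blanket statement about dihedral groups of order $\geq 4$ fails for $\Dihedral_6$, but the groups here are $2$-groups). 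Second, the final step ``the quotient collapses the sum of $3$ back to $2$'' should not be left as bookkeeping, since central quotients can raise $2$-rank (e.g.\ $Q_8/Z(Q_8)$): the clean fix is the isomorphism $(\GL_2^{\pm}(q)\times\GL_1^{\pm}(q))/\Delta \groupiso \GL_2^{\pm}(q)$ induced by $(A,\lambda)\mapsto \lambda^{-1}A$, where $\Delta$ is the diagonally embedded scalar group, together with $m_2(\GL_2^{\pm}(q))=2$ (commuting involutions are semisimple with eigenvalues $\pm 1$ in the ground field, hence simultaneously diagonalisable over a suitable orthogonal/arbitrary basis). You should also record why $C_{\PGL_3^{\pm}(q)}(t)$ is exactly the image of $C_{\GL_3^{\pm}(q)}(g)$ rather than something larger: an element conjugating $g$ to $\lambda g$ forces $\lambda^3=1$ by determinants and $\lambda=\pm1$ by comparing eigenvalue multiplicities, so $\lambda=1$; this is a second place where the oddness of the dimension $3$ is decisive, besides the norm-one square-root extraction you already single out.
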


We begin by studying the linear group of dimension $2$.

\begin{proposition}
\label{casePSL2}
Let $L\groupiso \PSL_2(q)$ with $q$ odd and $q\neq 3$.
Then every $2$-extension $LB$ of $L$ satisfies $\QD_2$, with the following exceptions:
\begin{enumerate}
\item $L \groupiso \PSL_2(5)$, $B=1$;
\item $L \groupiso \PSL_2(9)$, $B$ induces field automorphisms of order $2$.
\end{enumerate}
Moreover, every $2$-extension of $\Inndiag(L)\groupiso \PGL_2(q)$ satisfies $\QD_2$, except in case (2).
\end{proposition}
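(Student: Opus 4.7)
The plan is to stratify the $2$-extensions of $L = \PSL_2(q)$ by the image of $B$ in $\Out(L) \cong \Outdiag(L) \times \Gal(\FF_q/\FF_r) \cong \Cyclic_2 \times \Cyclic_a$ (with $q = r^a$, $r$ odd), where $\Outdiag(L) = \langle\delta\rangle$ and a field involution $\phi$ exists precisely when $a$ is even. Up to $\Inndiag(L)$-conjugacy within each $\Out(L)$-class (Proposition \ref{prop:centFieldAut}(4)), the relevant possibilities are $B \in \{1, \langle\delta\rangle, \langle\phi\rangle, \langle\delta\phi\rangle, \langle\delta,\phi\rangle\}$. I split the analysis by the $2$-rank of $LB$.

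For $B = 1$ or $B = \langle\delta\rangle$, one has $m_2(LB) = m_2(\PGL_2(q)) = 2$ by Proposition \ref{prop:2ranksSmallLinearGroups}, and $O_2(LB) = 1$ since $L$ is simple and $B$ acts faithfully. Lemma \ref{lm:connectedDim1} then reduces $\QD_2$ to the connectedness of $\A_2(LB)$, and Theorem \ref{thm:connectedA_2list} identifies the disconnected members of this family: only $LB \cong \PSL_2(5) \cong \PSL_2(2^2)$ falls in the list, yielding exception~(1). The group $\PGL_2(q)$ with $q$ odd is never on the list, so $\QD_2$ always holds for $\Inndiag(L)$.

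For $B$ containing a field-type involution $t$, Lemma \ref{lm:fieldAndDiag} applied to $\PSL_2$ gives $C_L(t) \cong \PGL_2(q_0)$ with $q_0 := \sqrt{q}$, and hence $m_2(LB) = 1 + m_2(\PGL_2(q_0)) = 3$. The main tool is Lemma \ref{lm:MVLES}, applied with normal subgroup $L$ when $|B| = 2$ and normal subgroup $\PGL_2(q)$ when $|B| = 4$, so that $\Outposet$ is cyclic of order $2$ in both cases, with every centralizer conjugate to $\PGL_2(q_0)$. For $q_0 = 3$ (i.e., $q = 9$), Proposition \ref{propEulerCharQuillenPoset} gives $\widetilde{\upchi}(\A_2(\Sym_4)) = 0$, and since $\A_2(\Sym_4)$ is connected (Theorem \ref{thm:connectedA_2list}), $\widetilde{H}_1(\A_2(\Sym_4)) = 0$. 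Lemma \ref{lm:MVLES}(3) then forces $\widetilde{H}_2(\A_2(LB)) = 0$, giving exception~(2) for both $L\langle\phi\rangle$ and $\PGL_2(9)\langle\phi\rangle$. For $q_0 \geq 5$, the previous paragraph applied to $q_0$ shows $\PGL_2(q_0)$ satisfies $\QD_2$, so $\widetilde{H}_1(\A_2(\PGL_2(q_0))) > 0$; a counting argument then verifies the strict inequality of Lemma \ref{lm:MVLES}(4), by noting that each of the $q_0(q_0^2+1)/2$ field-type involutions of $LB \setminus L$ contributes a copy of $\widetilde{H}_1(\A_2(\PGL_2(q_0)))$, a sum which dominates $\dim \widetilde{H}_1(\A_2(L))$ computable via Proposition \ref{propEulerCharQuillenPoset} from the classes of involutions and $V_4$-subgroups of $\PSL_2(q)$. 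This yields $\widetilde{H}_2(\A_2(LB)) \neq 0$. The case $B = \langle\delta,\phi\rangle$ for $q_0 \geq 5$ follows either by the same argument at the $\PGL_2$-level, or from Lemma \ref{lm:inclusion} since $L\langle\phi\rangle \leq LB$ has equal $2$-rank.

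The main obstacle will be the counting step for $q_0 \geq 5$: computing $\dim \widetilde{H}_1(\A_2(\PSL_2(q)))$ with enough precision to establish the strict inequality in Lemma \ref{lm:MVLES}(4). Fortunately, $q = q_0^2$ with $q_0$ odd forces $q \equiv 1 \pmod 8$, so the $V_4$-normalizer in $\PSL_2(q)$ is uniformly $\Sym_4$, which streamlines the Euler characteristic bookkeeping. The asymptotic comparison is favourable: the centralizer sum grows as $\sim q_0^3 \cdot \dim\widetilde{H}_1(\A_2(\PGL_2(q_0)))$, whereas $\dim\widetilde{H}_1(\A_2(\PSL_2(q)))$ is polynomially bounded in $q_0$, so the inequality holds uniformly for all $q_0 \geq 5$.
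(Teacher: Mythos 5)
Your overall route is the same as the paper's: rank-$2$ cases via connectivity (Theorem \ref{thm:connectedA_2list} plus Lemma \ref{lm:connectedDim1}), the field case via the long exact sequence of Lemma \ref{lm:MVLES} with all centralisers isomorphic to $\PGL_2(q^{1/2})$, the $q=9$ degeneration detected through $C_L(\phi)\groupiso\Sym_4$, and $\PGL_2(q)\gen{\phi}$ handled by Lemma \ref{lm:inclusion} from $L\gen{\phi}$. One remark on your case list: by Lemma \ref{lm:fieldAndDiag} no diagonal involution commutes with $\phi$, and every involution of $\Aut(L)\setminus\PGL_2(q)$ is a field automorphism whose image in $\Out(L)$ is $\bar\phi$; hence $\gen{\delta\phi}$ and $\gen{\delta,\phi}$ do not occur as $2$-extensions of $L$ at all. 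The group you treat under the label $B=\gen{\delta,\phi}$ is really $\PGL_2(q)\gen{\phi}$, which is only needed for the ``Moreover'' clause, and there your inclusion argument is exactly the paper's.

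The gap is in the quantitative step you defer, and your stated reason for believing it is not valid. First, the count of outer involutions is off by a factor of $2$: all $q^{1/2}(q+1)=q_0(q_0^2+1)$ $\PGL_2(q)$-conjugates of $\phi$ lie in the coset $L\phi$ (they split into two $L$-classes; for $q=9$ these are the $30$ involutions of $\Sym_6\setminus\Alt_6$), so the sum in Lemma \ref{lm:MVLES}(4) has $q_0(q_0^2+1)$ terms, not $q_0(q_0^2+1)/2$. Second, the asymptotic justification fails: both sides of the inequality are polynomials of the same degree $6$ in $q_0$, since the centraliser sum is roughly $q_0^3\cdot\tfrac13 q_0^3$ while $\dim \widetilde{H}_1(\A_2(\PSL_2(q_0^2)))$ is roughly $\tfrac1{12}q_0^6$; so ``polynomially bounded'' gives no domination, and the conclusion rests on comparing leading coefficients and checking small $q_0$, i.e.\ on the explicit Euler-characteristic computation you postpone. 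That computation (Dickson-type counts: $\PSL_2(q)$ has $\tfrac{q(q+\epsilon)}2$ involutions and $\tfrac{q(q^2-1)}{24}$ four-subgroups, $\PGL_2(q)$ has $q^2$ and $\tfrac{q(q^2-1)}{6}$) is precisely the core of the paper's proof, yielding the bound $\tfrac14(q^{1/2}-1)(q-1)(q^{3/2}-3q-4)>0$ for $q\geq 13$; with your halved involution count one still gets a positive, though tighter, bound $\tfrac{q^2-1}{12}\bigl(q_0^4-6q_0^3+7q_0^2-6q_0+12\bigr)$, which is $32$ times a positive factor at $q_0=5$. So your plan does close, but only after this computation is actually carried out; as written, the decisive inequality is asserted rather than proved.
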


\begin{proof}
We consider the possible $2$-extensions of $L$.
In any case, we know that $L$ is simple and that $\Out(L) = \Cyclic_2\times \Cyclic_a$, where $\Cyclic_2\groupiso \Outdiag(L)$ and $\Cyclic_a$ is the group of field automorphisms of $\GF{q}$.
Suppose that $\phi$ is an order $2$-field automorphisms of $\GF{q}$ (if it exists), and that $d\in \Inndiag(L) \setminus L$ is a diagonal involution.
Then the $2$-extensions of $L$ are given in Table \ref{tab:2extPSL}.
\begin{table}[ht]
\begin{tabular}{|c|c|c|}
\hline
$2$-extension $LB$ & $C_L(B)$ & $m_2(LB)$ \\
\hline
$B = 1$ & $L$ & $2$\\
$B = \gen{\phi}$ & $\PGL_2(q^{1/2})$ & $3$\\
$B = \gen{d}$ & $\Dihedral_{q+\epsilon}$ & $2$\\
\hline
\end{tabular}
\caption{$2$-extensions of $\PSL_2(q)$, $q\geq 5$ odd. Here $q\equiv \epsilon \pmod{4}$, $\epsilon\in \{1,-1\}$.}
\label{tab:2extPSL}
\end{table}
This table follows since every involution of $\Aut(\PSL_2(q)) - \PGL_2(q)$ is a field automorphism.
Recall also that field and diagonal automorphisms of order $2$ do not commute by Lemma \ref{lm:fieldAndDiag}
The structure of the centraliser for $d$ follows from the first row of Table 4.5.1 of \cite{GLS98}.
Finally, observe that $L\gen{d} = \Inndiag(L)$ and $m_2(\Inndiag(L)\gen{\phi}) = 3$ since $m_2(L) = m_2(\Inndiag(L)) = 2$.

We prove that each $2$-extension of $L$ satisfies $\QD_2$ by computing the Euler characteristic.
First, $2$-extensions $LB$ and $\Inndiag(L)\gen{\phi}$ have connected $\A_2$-poset by Theorem \ref{thm:connectedA_2list}, except for $L = \PSL_2(5)$, $B=1$.
Therefore, by Lemma \ref{lm:connectedDim1}, $L$ and $\Inndiag(L)$ satisfy $\QD_2$, except for $L =\PSL_2(5)$.
Note that $\A_2(\PSL_2(5)) = \A_2(\Alt_5) = \A_2(\PSL_2(4))$ is homotopically discrete with $5$ points, and the $2$-extension $\PGL_2(5)\groupiso \Sym_5$ does satisfy $\QD_2$.
This yields the conclusions of the statement for the case $q = 5$.

Next we show $\QD_2$ for the $2$-extensions $L\gen{\phi}$ and $\Inndiag(L)\gen{\phi}$, both of $2$-rank $3$ by Lemma \ref{lm:inclusion}.
Thus, it is enough to show that $L\gen{\phi}$ satisfies $\QD_2$.
In order to do this, we compute the dimensions of $H_1(\A_2(L))$ and $H_1(\A_2(C_L(\phi)))$.

Since in this situation $q$ is a square, $q\neq 5$.
Second, if $q = 25$, $C_L(\phi) = \PGL_2(5)$.
Hence, in any case, the dimension of these degree $1$ homology groups can be computed from the reduced Euler characteristic of the underlying $\A_2$-poset.
Here we use the formula given in Proposition \ref{propEulerCharQuillenPoset}.
Thus, for $K = L$ or $C_L(\phi)$,
\begin{align}
\begin{split}
\label{eq:formulaEulerPSL}
\dim H_1(\A_2(K)) & = -\widetilde{\upchi}(\A_2(K))\\
& = 1 - \text{\# of involutions in }K + 2 \cdot \text{\# of $4$-subgroups of } K.    
\end{split}
\end{align}
In Table \ref{tab:invAnd4sbgrps} we describe these numbers:
\begin{table}[ht]
\begin{tabular}{|c|c|c|}
 \hline
 Group & Number of involutions & Number of $4$-subgroups \\
 \hline
 $\PSL_2(q)$ & $\frac{q(q+\epsilon)}{2}$ & $\frac{q(q^2-1)}{24}$ \\
  $\PGL_2(q)$ & $q^2$ & $\frac{q(q^2-1)}{6}$\\
  \hline
\end{tabular}
\caption{Here $q\equiv \epsilon\pmod{4}$, $\epsilon\in \{1,-1\}$.}
\label{tab:invAnd4sbgrps}
\end{table}

\begin{proof}
[Proof of Table \ref{tab:invAnd4sbgrps}]
The number of involutions and $4$-subgroups of $\PSL_2(q)$ follows from Dickson's classification of the subgroups of $\PSL_2(q)$ (see also Theorem 6.5.1 of \cite{GLS98}).

The number of involutions of $\PGL_2(q)$ follows since there is a unique conjugacy class of diagonal involutions $d$ by Table 4.5.1 of \cite{GLS98}.
Thus, the number of elements in such conjugacy class is equal to $\frac{q(q-\epsilon)}{2}$, which gives $q^2$ after adding the number of involutions in $\PSL_2(q)$.

Finally, to compute the number of four-subgroups of $\PGL_2(q)$ we proceed as follows: each four-subgroup of $\PGL_2(q)$ is either contained in $\PSL_2(q)$ or else it contains a unique involution of $\PSL_2(q)$ and $2$ diagonal involutions.
Therefore, for a given diagonal involution $d$, there is a one-to-one correspondence between $4$-subgroups containing $d$ and involutions in $C_L(d) \groupiso \Dihedral_{q+\epsilon}$.
This shows that each diagonal involution is contained in $(q+\epsilon)/2$ $4$-subgroups.
Since we have $\frac{q(q-\epsilon)}{2}$ diagonal involutions, the total number of $4$-subgroups in $\PGL_2(q)$ containing diagonal involutions is
\[ \frac{q(q-\epsilon)}{2}\cdot \frac{(q+\epsilon)}{2} \cdot \frac{1}{2} = \frac{q(q^2-1)}{8}.\]
Thus the total number of $4$-subgroups in $\PGL_2(q)$ is
\[ \frac{q(q^2-1)}{24} + \frac{q(q^2-1)}{8} = \frac{q(q^2-1)}{6}.\]
This completes the proof of Table \ref{tab:invAnd4sbgrps}.
\end{proof}

Indeed, by Table \ref{tab:invAnd4sbgrps}, we get concrete values for the dimensions of the degree $1$ homology groups of $\A_2(\PSL_2(q))$ and $\A_2(\PGL_2(q))$:
\begin{equation}
    \label{eq:dimH1PSL2}
    \dim H_1(\A_2(\PSL_2(q))) = - \widetilde{\upchi}(\A_2(\PSL_2(q))) =
     \frac{1}{12} (q - \epsilon) (q^2 - (6-\epsilon) q - \epsilon 12),
\end{equation}
\begin{equation}
    \label{eq:dimH1PGL2}
    \dim H_1(\A_2(\PGL_2(q))) = - \widetilde{\upchi}(\A_2(\PGL_2(q))) = \frac{1}{3} (q - 3) (q^2 - 1).
\end{equation}
Now we need to describe the number of field automorphisms in $\PSL_2(q)\gen{\phi}$ and in $\PGL_2(q)\gen{\phi}$.

Recall that the field automorphisms of $\PSL_2(q)\gen{\phi}$ are all $\PGL_2(q)$-conjugated, with centraliser $C_{\PGL_2(q)}(\phi) = C_{\PSL_2(q)}(\phi)$.
Thus, the number of field automorphisms of order $2$ in $\PSL_2(q)\gen{\phi}$ is exactly
\[ \frac{|\PGL_2(q)|}{|C_{\PSL_2(q)}(q)|} = \frac{q(q^2-1)}{q^{1/2}(q-1)} = q^{1/2}(q+1).\]
This gives $q^{1/2}(q+1)$ involutions in $\PSL_2(q)\gen{\phi} \setminus \PSL_2(q)$.
Let $L=\PSL_2(q)$, $B = \gen{\phi}$.
By Lemma \ref{lm:MVLES}, the values in Table \ref{tab:invAnd4sbgrps} and formula (\ref{eq:formulaEulerPSL}), we conclude that:
\begin{align*}
\dim H_2(\A_2(LB)) & \geq q^{1/2}(q+1)\dim H_1(\A_2(\PGL_2(q^{1/2}))) - \dim H_1(\A_2(\PSL_2(q)))\\
& = q^{1/2}(q+1)\frac{1}{3}(q^{1/2}-3)(q-1) - \frac{1}{12}(q-1)(q^2-5q-12)\\
& = \frac{1}{4} (q^{1/2} - 1) (q - 1) (q^{3/2} - 3 q - 4).
\end{align*}
Note that $q\equiv 1 \pmod{4}$.
The above number is positive for all $q\geq 13$, which is our case since $q$ is an even power of an odd prime and $q\neq 9$ by hypothesis.
We conclude that $LB = \PSL_2(q)\gen{\phi}$ satisfies $\QD_2$.
Then also $\PGL_2(q)\gen{\phi}$ satisfies $\QD_2$.
Moreover,
\begin{align}
\begin{split}
\label{eq:dimH2PGL2andPSL2withFields}
\dim H_2(\A_2(\PGL_2(q)\gen{\phi})) & \geq \dim H_2(\A_2(\PSL_2(q)\gen{\phi}))\\
& \geq \frac{1}{4} (q^{1/2} - 1) (q - 1) (q^{3/2} - 3 q - 4).
\end{split}
\end{align}

We have shown that every possible $2$-extension of $\PSL_2(q)$ and $\PGL_2(q)$ satisfies $\QD_2$, except for the cases described in the statement of the theorem.
\end{proof}

We note that the excepted cases in Proposition \ref{casePSL2} actually fail $\QD_2$.
Indeed, $\PSL_2(5)$ fails $\QD_2$ since it has $2$-rank $2$ and $\A_2(\PSL_2(5)) = \A_2(\PSL_2(4))$ is homotopically discrete.
The following example provides the details that show that $\PSL_2(9)\gen{\phi}$ and $\PGL_2(9)\gen{\phi}$ fail $\QD_2$, where $\phi$ is a field automorphism of order $2$.

\begin{example}
\label{casePSL29}
Let $L = \PSL_2(9)$ and let $A = \Aut(L)$.
Then $A/L\groupiso \Cyclic_2\times \Cyclic_2$, so every $2$-extension of $L$ is a nontrivial normal subgroup of $A$.
This gives $3$ possible $2$-extensions of $L$, but not $4$.
Let $\phi$ be a field automorphism of $L$ and $d$ a diagonal automorphism of $L$, both of order $2$.
Then the possible $2$-extensions of $L$ are:
\begin{enumerate}
\item $L$, with $2$-rank $2$, satisfies $\QD_2$ with $H_1(\A_2(L))$ of rank $16$;
\item $L \gen{\phi}$, with $2$-rank $3$, fails $\QD_2$ since $C_L(\phi)\groupiso \Sym_4$, which has nontrivial $2$-core $O_2(C_L(\phi)) \groupiso \Cyclic_2\times \Cyclic_2\neq 1$;
\item $L\gen{d}=\PGL_2(9)$, with $2$-rank $2$, satisfies $\QD_2$ with $H_1(\A_2(L)\gen{d})$ of rank $160$ and $C_L(d) \groupiso \Dihedral_{10}$.
\end{enumerate}
Note that $\Aut(L)$ has $2$-rank $3$ and does not satisfy $\QD_2$, and it is not a $2$-extension of $L$ since diagonal and field automorphisms do not commute in $\Aut(L)$.
Also $\PGL_2(9)\gen{\phi}$ fails $\QD_2$ since $C_{\PGL_2(9)}(\phi) = C_L(\phi)$ has nontrivial $2$-core.

There is also a remaining almost simple group $N$ with $L<N<\Aut(L)$, not contained in the previous cases.
This is the extension $N = \PSL_2(9).2 \groupiso \Alt_6.2$, and it satisfies that $\A_2(N) = \A_2(L)$.
Therefore, although this group $N$ is not a $2$-extension of $L$, it is a ``non-split $2$-extension'', and it does satisfy $\QD_2$.

Finally, these computations show that $\A_2(L) \hookrightarrow \A_2(\Aut(L))$ induces an inclusion in homology, and hence a non-zero map.
By the main result of \cite{PS1}, $\PSL_2(9)$ is not a component of a minimal counterexample to Quillen's conjecture.
\end{example}

Our next aim is to show that $2$-extensions of $\PSL_3(q)$ satisfy $\QD_2$, with only a few exceptions.
We will need the following lemma which records the values of the Euler characteristic of the Quillen poset of some linear groups and the unitary groups in dimension $3$.

\begin{lemma}
\label{eulerChar}
For $L = \PSL_n(q)$ and $n$ odd, we have
\[ \widetilde{\upchi}(\A_2(L)) = \widetilde{\upchi}(\A_2(\PGL_n(q))) = \frac{(-1)^n}{n} \prod_{i=1}^{n-1} (q^i-1) f_n(q),\]
where $f_n(q)$ denotes a polynomial as described in \cite{Welker}. For instance, $f_3(q) = q^3+3q^2+3q+3$.
Moreover, since $\A_2(L)$ is Cohen-Macaulay of dimension $n-2$, the above Euler characteristic computes the dimension of $H_{n-2}( \A_2(L) )$.

If $L = \PSU_3(q)$, then
\[ \widetilde{\upchi}(\A_2(L)) = \widetilde{\upchi}(\A_2(\PGU_3(q))) = \frac{1}{3}(q^6-2q^5-q^4+2q^3-3q^2+3). \]
\end{lemma}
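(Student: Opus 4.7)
The plan is to split the proof into three pieces: (i) identify the $\PSL$ and $\PGL$ (resp.\ $\PSU_3$ and $\PGU_3$) Quillen posets, (ii) invoke \cite{Welker} for the $\PSL_n(q)$ formula, and (iii) compute the $\PSU_3(q)$ Euler characteristic directly by counting.

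For step (i), I would observe that $[\PGL_n(q):\PSL_n(q)] = \gcd(n,q-1)$ divides $n$ and so is odd when $n$ is odd, while $[\PGU_3(q):\PSU_3(q)] = \gcd(3,q+1) \in \{1,3\}$ is also odd. Any $2$-subgroup of the larger group projects trivially to the odd-order quotient and therefore lies inside the normal subgroup; hence $\A_2(\PSL_n(q)) = \A_2(\PGL_n(q))$ and $\A_2(\PSU_3(q)) = \A_2(\PGU_3(q))$ literally as subposets. This yields the first equality in each display, and reduces everything to the $\PGL$ / $\PGU$ computation.

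For step (ii), I would cite \cite{Welker}, where for $n$ odd the poset $\A_2(\PGL_n(q))$ is shown to be Cohen--Macaulay of top dimension $n-2 = m_2(\PSL_n(q))-1$, with reduced Euler characteristic given by the stated polynomial identity involving $f_n(q)$. The Cohen--Macaulay property concentrates the reduced homology in top degree, so $\dim H_{n-2}(\A_2(L)) = (-1)^{n-2}\,\widetilde{\upchi}(\A_2(L))$, matching the assertion in the lemma.

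For step (iii), I would apply Proposition~\ref{propEulerCharQuillenPoset}. Since $m_2(\PSU_3(q)) = 2$ by Proposition~\ref{prop:2ranksSmallLinearGroups}, only involutions and four-subgroups contribute, giving
\[
\widetilde{\upchi}(\A_2(L)) \;=\; -1 \;+\; \#\{\text{involutions of } L\} \;-\; 2 \cdot \#\{V_4\text{-subgroups of } L\}.
\]
Table~4.5.1 of \cite{GLS98} shows that there is a single $L$-class of involutions $t$ (lifting to $\diag(1,-1,-1)\in \SU_3(q)$), whose centraliser in $\SU_3(q)$ is isomorphic to $\GU_2(q)$ and descends to a quotient by the odd centre $Z(\SU_3(q))$ in $L$. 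So the number of involutions is $|L|/|C_L(t)|$, computable in closed form. For the four-subgroups I would use a double count on pairs $(t, V)$ with $t\in V \in V_4$: each $V_4$ contains three involutions, while a fixed $t$ lies in $\tfrac{1}{2}(i(C_L(t))-1)$ four-subgroups, where $i(H)$ denotes the number of involutions in $H$; and $i(C_L(t))$ consists of $t$ itself plus the $|\GU_2(q)|/|\GU_1(q)|^2$ non-central involutions conjugate to $\diag(1,-1)$ inside $\GU_2(q)$. Substituting into the displayed formula produces a closed polynomial in $q$, matching the stated expression up to the sign convention. The main obstacle I anticipate is in this last step: the orders of $|L|$ and $|C_L(t)|$ depend on $\gcd(3,q+1)$, and one must verify that these factors cancel cleanly so that the same polynomial identity holds uniformly for $q \equiv 1 \pmod 3$ and $q \equiv 2 \pmod 3$.
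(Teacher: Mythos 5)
Your proposal is correct, and it diverges from the paper's proof in two places. For the $\PSL_n(q)$ part you do essentially what the paper does: quote \cite{Welker} for the Euler characteristic and the Cohen--Macaulay property; the paper additionally routes the identification $\A_2(\PSL_n(q)) = \A_2(\PGL_n(q))$ (and $= \A_2(\GL_n(q))_{>Z}$) through Proposition 7.5 of \cite{PW}, whereas your odd-index argument ($[\PGL_n(q):\PSL_n(q)]$ and $[\PGU_3(q):\PSU_3(q)]$ are odd, so every $2$-subgroup of the larger group lies in the smaller) is an elementary and perfectly adequate substitute for the equalities actually used in the lemma. The genuine difference is the $\PSU_3(q)$ formula: the paper simply cites Example 7.6 of \cite{PW}, while you recompute it from Proposition \ref{propEulerCharQuillenPoset} using the single class of involutions with centraliser $\GU_2(q)/Z(\SU_3(q))$ and a double count of pairs $(t,V)$. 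This is self-contained and it works: one gets $q^2(q^2-q+1)$ involutions and $q^3(q-1)(q^2-q+1)/6$ four-subgroups (the $\gcd(3,q+1)$ factors cancel in $|L:C_L(t)|$, as you anticipated), hence
\[ \widetilde{\upchi}(\A_2(\PSU_3(q))) = -1 + q^2(q^2-q+1) - \tfrac{1}{3}\,q^3(q-1)(q^2-q+1) = -\tfrac{1}{3}\bigl(q^6-2q^5-q^4+2q^3-3q^2+3\bigr),\]
which is the negative of the value displayed in the lemma but agrees with how the lemma is applied in Proposition \ref{casePSU3}, where $\dim H_1(\A_2(L)) = -\widetilde{\upchi}(\A_2(L))$ is taken to be this positive polynomial; so your ``up to sign convention'' caveat is justified and in fact pinpoints a sign slip in the lemma as stated. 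The trade-off: the paper's citation is shorter, while your counting argument makes the $\PSU_3$ case independent of \cite{PW} and verifiable by hand (e.g.\ for $q=3$ it gives $63$ involutions, $63$ four-subgroups, $\widetilde{\upchi} = -64$).
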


\begin{proof}
The value of the Euler characteristic for $\PGL_n(q)$ follows from Proposition 4.1 and Theorem 4.4 of \cite{Welker} (note that there is a typo in the formula of Theorem 4.4, and the product over $i$ should be up to $r-1$).
Also, since $n$ is odd, by Proposition 7.5 of \cite{PW}, $\A_2(\PSL_n(q)) = \A_2(\PGL_n(q)) = \A_2(\GL_n(q))_{>Z}$ where $Z$ is the cyclic subgroup of order $2$ of $Z(\GL_n(q))$.
By \cite{Qui78} (see also \cite{Welker}), $\A_2(\PSL_n(q))$ is Cohen-Macaulay of dimension $n-2$.

The formula for $\PGU_3(q)$ follows from Example 7.6 of \cite{PW}.
\end{proof}

Next, we show that the $2$-extensions of $\PSU_3(q)$ satisfy $\QD_2$, except for $q = 3$.
These cases will be important during our analysis for $\PSL_3(q)$, especially when working with $2$-extensions by graph-field automorphisms.

\begin{proposition}
\label{casePSU3}
Let $L = \PSU_3(q)$. Then $L$ satisfies (E-$\QD$) if $q\neq 3$.
Moreover, let $\phi$ be a graph automorphism of order $2$ of $L$.
Then we have
\begin{align*}
\dim H_2(\A_2(&\PGU_3(q)\gen{\phi})) \geq \dim H_2(\A_2(\PSU_3(q) \gen{\phi}))\\
& \geq \frac{1}{3}(q^2-1)(q+1)\left( \frac{q^2(q^2-q+1)}{(3,q+1)}(q-3) - (q^3-3q^2+3q-3) \right),
\end{align*}
which is a positive polynomial for $q > 3$.
Finally, for $q = 3$, $\PSU_3(3)$ satisfies $\QD_2$ but $\PSU_3(3)\gen{\phi}$ fails $\QD_2$.
\end{proposition}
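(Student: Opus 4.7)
The plan is to mirror the proof of Proposition \ref{casePSL2}: classify the $2$-extensions of $L = \PSU_3(q)$, handle the simple group directly via an Euler-characteristic computation, and treat the unique proper $2$-extension by the Mayer--Vietoris-type long exact sequence of Lemma \ref{lm:MVLES}.

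\textbf{Classifying the $2$-extensions.} Writing $q = r^a$, the group $\Out(L)$ is an extension of $\Outdiag(L) \cong \Cyclic_{(3,q+1)}$ (of odd order) by the cyclic group $\Cyclic_{2a}$ of Galois automorphisms of $\GF{q^2}/\GF{r}$. The only involution of $\Out(L)$ is therefore the unique element of order $2$ in $\Cyclic_{2a}$, namely the graph automorphism $\phi$ in the sense of Section \ref{sec:preliminaries}. Thus $\HO(L) = \{1,\gen{\phi}\}$, and it suffices to verify $\QD_2$ for $L$ and for $L\gen{\phi}$.

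\textbf{The simple group.} By Proposition \ref{prop:2ranksSmallLinearGroups}, $m_2(L) = 2$, so $\A_2(L)$ is one-dimensional. The Euler characteristic from Lemma \ref{eulerChar} factors as $\widetilde\upchi(\A_2(L)) = \tfrac{1}{3}(q-1)(q+1)^2(q^3-3q^2+3q-3)$, which is strictly positive for every odd $q \geq 3$. Since $L$ does not appear in the list of Theorem \ref{thm:connectedA_2list}, the poset $\A_2(L)$ is connected, and hence $\dim H_1(\A_2(L)) = \widetilde\upchi(\A_2(L)) > 0$, proving $\QD_2$ for $L$.

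\textbf{The extension $L\gen{\phi}$.} Reading the graph-involution row for the family $A_2(\epsilon)$ in Table 4.5.1 of \cite{GLS98} yields $C_L(\phi) \cong \PGL_2(q)$, of $2$-rank $2$. Lemma \ref{lm:prankExtensions} then gives $m_2(L\gen{\phi}) = 1 + m_2(\PGL_2(q)) = 3$. Since $L$ has index $2$ in $L\gen\phi$, every $B \in \Outposet_{L\gen\phi}(L)$ is cyclic of order $2$, so Lemma \ref{lm:MVLES} applies. Proposition \ref{prop:centFieldAut}(4), together with the fact that $(3,q+1)$ is odd, implies that the involutions of $L\phi$ form a single $L$-conjugacy class of cardinality $|L:C_L(\phi)| = q^2(q+1)(q^2-q+1)/(3,q+1)$. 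Substituting this count, the value $\dim H_1(\A_2(\PGL_2(q))) = \tfrac{1}{3}(q-3)(q^2-1)$ from Eq. \eqref{eq:dimH1PGL2}, and $\dim H_1(\A_2(L))$ from the previous step into Lemma \ref{lm:MVLES}(4), and factoring $(q^2-1)(q+1)/3$ out, produces exactly the bound stated in the proposition.

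\textbf{Positivity and the case $q = 3$.} A routine polynomial inspection (the dominant term behaves like $\tfrac{1}{9}q^8$) shows the bound is strictly positive for all odd $q > 3$, settling (E-$\QD$) for $L$ whenever $q \neq 3$. For $q = 3$ the factor $(q-3)$ collapses the positive contribution, in agreement with $\PGL_2(3) \cong \Sym_4$ having $\dim H_1(\A_2) = 0$; I will verify directly—using the identification $\PSU_3(3)\gen\phi \cong G_2(2)$ and a GAP computation, or the observation that $C_L(\phi) \cong \Sym_4$ has $O_2 \neq 1$ making the whole group $2$-constrained at the relevant level—that $H_2(\A_2(G_2(2))) = 0$, establishing the failure of $\QD_2$. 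The main technical point is the identification of $C_L(\phi)$ and the single-class structure of graph involutions in $L\phi$; once these are in hand the argument reduces to Lemma \ref{lm:MVLES} and elementary polynomial checks.
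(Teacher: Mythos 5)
Your proposal is correct and follows essentially the same route as the paper: the Euler-characteristic value for $\A_2(\PSU_3(q))$, the identification $C_L(\phi)\cong\PGL_2(q)$ from Table 4.5.1 of \cite{GLS98}, the count $q^2(q^3+1)/(3,q+1)$ of graph involutions in the coset $L\phi$, the lower bound from Lemma \ref{lm:MVLES}, and the collapse at $q=3$ coming from $O_2(\PGL_2(3))\neq 1$ via Lemma \ref{lm:MVLES}(3). One small repair: the single-class statement for the graph involutions should be taken from Table 4.5.1 (unique $\Inndiag(L)$-class of graph involutions, acting by inversion on $\Outdiag(L)$), not from Proposition \ref{prop:centFieldAut}(4), which only covers field and graph-field automorphisms; with that citation fixed, your count and the rest of the argument agree with the paper's.
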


\begin{proof}
We have that $\A_2(L)$ is connected by Theorem \ref{thm:connectedA_2list}, and $m_2(L) = 2$ by Proposition \ref{prop:2ranksSmallLinearGroups}.
Thus $L$ satisfies $\QD_2$ by Lemma \ref{lm:connectedDim1}.
Moreover, by Lemma \ref{eulerChar},
\begin{equation}
\label{eq:dimH1PSU3}
\dim H_1(\A_2(L)) = -\widetilde{\upchi}(\A_2(L)) = \frac{1}{3}(q^6-2q^5-q^4+2q^3-3q^2+3).
\end{equation}

Next, the only possible nontrivial $2$-extension of $L$ is by a graph automorphism $\phi$ of order $2$ (which indeed arises from the field automorphism $x\mapsto x^q$).
Let $L_1 = L\gen{\phi}$ be such extension.
By Table 4.5.1 of \cite{GLS98},
\[ C_{\PGU_3(q)}(\phi) \groupiso \Inndiag(\Omega_3(q)) = \PGL_2(q).\]
This implies that $C_L(\phi) = \PGL_2(q)$.
Moreover, there is a unique $\PGU_3(q)$-conjugacy class of graph automorphisms, and such elements act by inversion on $\Outdiag(L) = (3,q+1)$.
Thus the conjugacy class of $\phi$ in $\Out(L)$ has size $(3,q+1)$, and this gives rise to exactly $(3,q+1)$ extensions $L\gen{\phi'}\leq \Aut(L)$ of $L$ by a conjugate $\phi'$ of $\phi$, and these extensions are $\Aut(L)$-conjugated.
We conclude then that the number of graph automorphisms contained in $L_1$ is
\[ n_g := \frac{|\PGU_3(q)|}{|\PGL_2(q)| (3,q+1)} = \frac{q^2(q^3+1)}{(3,q+1)}.\]

Finally, by Lemma \ref{lm:MVLES}, we conclude that
\begin{align*}
\dim H_2(\A_2(&\PGU_3(q)\gen{\phi})) \geq \dim H_2(\A_2(\PSU_3(q) \gen{\phi}))\\
&\quad \geq n_g \dim H_1(\A_2(\PGL_2(q))) - \dim H_1(\A_2(\PSU_3(q)))\\
&\quad = \frac{q^2(q^3+1)}{(3,q+1)}\frac{1}{3}(q-3)(q^2-1)   - \frac{1}{3}(q^6-2q^5-q^4+2q^3-3q^2+3)\\
&\quad = \frac{1}{3}(q^2-1)(q+1)\left( \frac{q^2(q^2-q+1)}{(3,q+1)}(q-3) - (q^3-3q^2+3q-3) \right).
\end{align*}
This polynomial is positive for all $q > 3$.
Therefore, $L_1$ satisfies $\QD_2$ if $q\neq 3$.

When $q = 3$, $C_L(\phi) = \PGL_2(3)$ has nontrivial $2$-core, so $H_1(\A_2(C_L(\phi))) = 0$, and by Lemma \ref{lm:MVLES}, $H_2(\A_2(L_1)) = 0$.
\end{proof}

Now we have the necessary background to prove that $\PSL_3(q)$ satisfies (E-$\QD$), except for a small number of cases.

\begin{proposition}
\label{casePSLOddDim}
Let $L = \PSL_n(q)$ with $n,q$ odd.
The following assertions hold:
\begin{enumerate}
\item $L$ and $L$ extended by a field involution satisfy $\QD_2$.
\item If $n = 3$, then every $2$-extension of $L$ satisfies $\QD_2$, with the following exceptions that fail $\QD_2$:
\begin{itemize}
\item $L = \PSL_3(3)$ extended by a graph automorphism, and
\item $L = \PSL_3(9)$ extended by a group generated by a field involution and a graph automorphism.
\end{itemize}
\end{enumerate}

\end{proposition}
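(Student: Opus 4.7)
The plan is as follows. For part (1), I invoke Lemma \ref{lm:QDfromParabolicWith2Sylow} with the maximal parabolic $P_1 \leq L$ stabilising a $1$-dimensional subspace of the natural module $\FF_q^n$. Its index is
\[|L:P_1| = \frac{q^n-1}{q-1} = 1 + q + \cdots + q^{n-1},\]
a sum of $n$ odd summands, hence odd since $n$ is odd. So $P_1$ contains a Sylow $2$-subgroup of $L$, and the lemma immediately yields $\QD_2$ for $L$ and for its extension by a field involution.

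For part (2), since $\Outdiag(\PSL_3(q))$ has odd order $\gcd(3, q-1)$, every outer involution of $L$ is of field, graph, or graph-field type. In view of part (1), the remaining $B \in \HO(L)$ to consider are: (A) $\gen{\gamma}$ with $\gamma$ a graph involution; (B) $\gen{\gamma\phi}$ with $\gamma\phi$ a graph-field involution (only if $q$ is a square); and (C) the rank-$2$ subgroup $\gen{\phi, \gamma}$ (only if $q$ is a square). Each is handled via Lemma \ref{lm:MVLES} after choosing a normal subgroup whose quotient in $LB$ is cyclic of order $2$, so that $\Outposet$ consists of cyclic groups.

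For (A) and (B), I take $L$ itself as the normal subgroup. By Table 4.5.1 of \cite{GLS98}, $C_L(\gamma) \cong \PGL_2(q)$ and $C_L(\gamma\phi) \cong \PSU_3(q^{1/2})$, and in both situations $m_2(LB) = 3$. Lemma \ref{lm:MVLES} then gives a bound
\[\dim H_2(\A_2(L\gen{\gamma})) \geq \frac{q^2(q^3-1)}{\gcd(3,q-1)} \dim H_1(\A_2(\PGL_2(q))) - \dim H_1(\A_2(L)),\]
and an analogous inequality for $L\gen{\gamma\phi}$. Inserting the dimension formulas from Lemma \ref{eulerChar}, equation (\ref{eq:dimH1PGL2}), and Proposition \ref{casePSU3}, these bounds are positive for all $q \geq 5$ in case (A) and for all squares $q \geq 9$ in case (B). At $q = 3$, however, $C_L(\gamma) \cong \Sym_4$ has non-trivial $2$-core, so $H_1(\A_2(\PGL_2(3))) = 0$; Lemma \ref{lm:MVLES}(3) then forces $H_2(\A_2(\PSL_3(3)\gen{\gamma})) = 0$, which is the first exception.

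For the rank-$2$ case (C), Lemma \ref{lm:prankExtensions} gives $m_2(LB) = 4$, realised on $B$ together with $C_L(B) \cong \PGL_2(q^{1/2})$. I apply Lemma \ref{lm:MVLES} once more, now with $L\gen{\gamma}$ as the normal subgroup and cyclic quotient $\gen{\phi}$. The outer involutions of $LB \setminus L\gen{\gamma}$ split into two $L\gen{\gamma}$-classes, of field and graph-field type, with centralisers $\PSL_3(q^{1/2})\gen{\gamma}$ and $\PSU_3(q^{1/2})\gen{\gamma}$ respectively. For $q \geq 25$ both centralisers satisfy $\QD_2$ at rank $3$, by case (A) applied at $q^{1/2} \geq 5$ and by Proposition \ref{casePSU3}, and the resulting counting bound is positive. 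For $q = 9$ these centralisers collapse to $\PSL_3(3)\gen{\gamma}$ and $\PSU_3(3)\gen{\gamma}$, which both fail $\QD_2$ (by case (A) at $q = 3$ and by Proposition \ref{casePSU3}); Lemma \ref{lm:MVLES}(3) then forces $H_3(\A_2(LB)) = 0$, giving the second exception. The main obstacle throughout is the explicit polynomial positivity check in these counting bounds, which requires the Euler-characteristic formulas from Lemma \ref{eulerChar}, equation (\ref{eq:dimH1PGL2}), and Proposition \ref{casePSU3}, together with the sizes of $L$-conjugacy classes of outer involutions from Table 4.5.1 of \cite{GLS98}.
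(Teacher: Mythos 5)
Your proposal is correct and follows the paper's proof almost step for step: part (1) via the odd-index point-stabiliser parabolic and Lemma \ref{lm:QDfromParabolicWith2Sylow}, the graph and graph-field extensions via Lemma \ref{lm:MVLES} with $N=L$ and the same class-size and Euler-characteristic data (including the $q=3$ exception from $C_L(\gamma)\groupiso\PGL_2(3)$ having nontrivial $2$-core). The one genuine deviation is in the rank-two case $B=\gen{\phi,\gamma}$: the paper runs the long exact sequence over the normal subgroup $L\gen{\phi}$, so its input data are the bounds for $H_2(\A_2(\PGL_2(q)\gen{\phi}))$ from Eq.~(\ref{eq:dimH2PGL2andPSL2withFields}) and for $H_2(\A_2(\PGU_3(q^{1/2})\gen{\phi}))$ from Proposition \ref{casePSU3}, together with an upper bound on $\dim H_2(\A_2(L\gen{\phi}))$ obtained from the same sequence; you instead take $N=L\gen{\gamma}$, which lets you recycle your own case (A) at $q^{1/2}$ and Proposition \ref{casePSU3} for the centralisers $\PSL_3(q^{1/2})\gen{\gamma}$ and $\PSU_3(q^{1/2})\gen{\gamma}$, and it makes the $q=9$ exception fall out cleanly from Lemma \ref{lm:MVLES}(3). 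The symmetric cost, which you leave implicit, is that your counting bound still requires an explicit upper bound on $\dim H_2(\A_2(L\gen{\gamma}))$; this is available by exactly the same device the paper uses for $d_f$ (namely $\dim H_2(\A_2(L\gen{\gamma}))\leq n_g\dim H_1(\A_2(\PGL_2(q)))$, since $H_2(\A_2(L))=0$), and with it a rough leading-term comparison shows your positivity claim for large square $q$ is plausible, though the polynomial check for all $q\geq 25$ should be carried out explicitly, just as the paper does for its version of the inequality. So: same method, with a swapped choice of corank-one normal subgroup in the last case and correspondingly different (but equally available) auxiliary bounds; no genuine gap beyond the routine verifications you already flag.
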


\begin{proof}
Let $L = \PSL_n(q)$, with $n$ odd, and consider the stabiliser $P$ of a $1$-dimensional subspace of the underlying module $V = \GF{q}^n$.
Then $P$ is a parabolic subgroup with structure $P \groupiso [q^{n-1}]L_P$, where $L_P$, a Levi complement for $P$, has structure $\SL_{n-1}(q) \circ_{(n,q-1)} \Cyclic_{q-1}$.
Thus $|L_P| = |\GL_{n-1}(q)| / (n,q-1)$ and the index of $P$ in $L$ is:
\[ |L:P| = \frac{q^{n(n-1)/2}\prod_{i=2}^{n}(q^i-1)}{q^{n-1}\cdot  q^{(n-1)(n-2)/2}\prod_{i=1}^{n-1}(q^i-1)} = \frac{q^n-1}{q-1} = q^{n-1} + q^{n-2} + \ldots + q + 1.\]
Since $n$ is odd, the index of $P$ in $\PSL_n(q)$ is odd.
By Lemma \ref{lm:QDfromParabolicWith2Sylow}, $L=\PSL_n(q)$ and $L$ extended by a field involution satisfy $\QD_2$.
This proves item (1).

Before moving to the case $n = 3$, we list all the possible $2$-extensions of $L$.
Denote by $\phi$, $\gamma$ and $\delta$ a field automorphism of order $2$, a graph automorphism and a graph-field automorphism of $L$, respectively, such that $[\phi,\gamma] = 1$ and $\delta = \phi \gamma$.
Let also $L^* = \PGL_n(q)$.
Then the $2$-extensions of $L$ are:
\begin{enumerate}[label=(\roman*)]
\item $L$;
\item $L\gen{\phi}$, with $C_{L^*}(\phi) \groupiso \PGL_n(q^{1/2})$ by Proposition \ref{prop:centFieldAut};
\item $L\gen{\gamma}$, with $C_{L}(\gamma) \groupiso \Inndiag(\Omega_n(q))$ by Table 4.5.1 of \cite{GLS98};
\item $L\gen{\delta}$, with $C_{L^*}(\delta) \groupiso \PGU_n(q^{1/2})$ by Proposition \ref{prop:centFieldAut};
\item $L\gen{\phi,\gamma}$, with $C_{L}(\phi,\gamma) \groupiso \Inndiag(\Omega_n(q^{1/2}))$ by (iii) and Proposition \ref{prop:centFieldAut}.
\end{enumerate}
Now suppose that $n = 3$, that is $L = \PSL_3(q)$.
We know that the extensions of cases (i) and (ii) above satisfy $\QD_2$ by the parabolic argument.
So it remains to show that the $2$-extensions by graph, graph-field and both graph and field automorphisms, satisfy $\QD_2$.
To that end, we compute the dimensions of the top-degree homology groups, similar to what we did for $\PSL_2(q)$ in the proof of Proposition \ref{casePSL2}.

First, recall that we have the following number of involutions of each type.
Let $B = \gen{\phi,\gamma}$.
\begin{align*}
n_f & := \# \text{ field involutions in }L\gen{\phi} = \#\text{ field involutions in }LB\\
& = \frac{|\PGL_3(q)|}{|\PGL_3(q^{1/2})| (3,q^{1/2}+1)},\\
n_g & := \#\text{ graph involutions in }L\gen{\gamma} = \#\text{ graph involutions in }LB\\
& = \frac{|\PGL_3(q)|}{|\PGL_2(q)| (3,q-1)},\\
n_{gf} & := \#\text{ graph-field involutions in }L\gen{\delta} = \#\text{ graph-field involutions in }LB \\
& = \frac{|\PGL_3(q)|}{|\PGU_3(q^{1/2})| (3,q^{1/2}-1)}.
\end{align*}
To compute these numbers, we have used the structure of the centraliser in each case, the fact that there is a unique $L^*$-conjugacy class for each type of involution, and the structure of $\Out(L) = (3,q-1) : \gen{\phi,\gamma}$ (cf. Theorem 2.5.12 of \cite{GLS98}).

Let $t$ be a field, graph or graph-field involution of $L$, and let $L_1 = L\gen{t}$.
Then the number $n_1$ of involutions in $L_1 \setminus L$ is $n_f,n_g$ or $n_{gf}$, accordingly to the type of $t$.
Note also that $m_2(L_1) = m_2(L) + 1 = 3$.

By Lemma \ref{lm:MVLES},
\begin{equation}
\label{eq:boundDimH2}
\dim H_2(\A_2(L_1)) \geq n_1 \cdot \dim H_1(\A_2(C_L(t))) - \dim H_1(\A_2(L)).
\end{equation}
We compute $d(t) := \dim H_1(\A_2(C_L(t)))$ in each case, by using Lemma \ref{eulerChar} and Eq. (\ref{eq:dimH1PGL2}).
Note that $\Omega_1(C_L(\phi)) = \PSL_3(q^{1/2})$ by item (ii) above.
Also $C_L(\gamma) = \PGL_2(q)$ by the classical isomorphism $\Inndiag(\Omega_3(q)) \groupiso \PGL_2(q)$.
By Lemma \ref{eulerChar}, we have:
\begin{align*}
  d(\phi) & = \dim H_1(\A_2(\PSL_3(q^{1/2}))) = \frac{1}{3}(q^{1/2}-1)(q-1)(q^{3/2}+3q+3q^{1/2}+3),\\
d(\gamma) & = \dim H_1(\A_2(\PGL_2(q)))       = \frac{1}{3}(q-3)(q^2-1),\\
d(\delta) & = \dim H_1(\A_2(\PGU_3(q^{1/2}))) = \frac{1}{3}(q^3-2q^{5/2}-q^2+2q^{3/2}-3q+3).\\
\end{align*}
Let $d := \dim H_1(\A_2(L))$.
By Eq. (\ref{eq:dimH1PSL2}), this dimension is
\[ d = \frac{1}{12}(q-\epsilon)(q^2-(6-\epsilon)q-\epsilon 12),\]
with $q\equiv \epsilon\pmod{4}$ and $\epsilon \in \{\pm 1\}$.

Now it is routine to verify that $n_1 d(t) > d$ if $t = \gamma$ or $t = \delta$, if and only if $(t,q) \neq (\gamma,3)$.
Indeed, for $q = 3$, $C_L(\gamma) = \PGL_2(3) \groupiso \Sym_4$ has non-trivial $2$-core, so $d(\gamma) = 0$ and in consequence, $H_2(L\gen{\gamma}) = 0$.
This shows that $L\gen{\gamma}$ fails $\QD_2$ if $q = 3$.
Therefore, a $2$-extension of $L$ by a field, graph or graph-field involution satisfies $\QD_2$ if and only if $q \neq 3$ when $L$ is extended by a graph involution.

It remains to show that $LB = L\gen{\phi,\gamma}$ verifies $\QD_2$.
For this case, we take $L_f = L\gen{\phi}$, $L_2 = LB$ and consider the long exact sequence of Lemma \ref{lm:MVLES} at $m = 2$ there (since $m_2(L_2) = 4$).
That is, we need to show that $H_3(\A_2(L_2)) \neq 0$.

Note that the set of involutions $t\in L_2\setminus L_1$ is exactly the set of all graph and graph-field automorphisms of the extension $L_2 = LB$.
Let $d_g := \dim  H_2(\A_2(\PGL_2(q)\gen{\phi}))$, $d_{gf} :=  \dim H_2(\A_2(\PGU_3(q^{1/2})\gen{\phi}))$ and $d_f := \dim H_2(\A_2(L_f))$.
Therefore, by Lemma \ref{lm:MVLES},
\begin{equation}
\dim H_3( \A_2 (L_2)) \geq n_g d_g  + n_{gf}d_{gf} - d_f.
\end{equation}
We show that the right-hand side of this equation is positive if $q\neq 9$ by providing proper bounds of the dimensions $d_g$, $d_{gf}$ and $d_f$.

By Eq. (\ref{eq:dimH2PGL2andPSL2withFields}),
\begin{equation}
d_g = \dim H_2(\A_2(\PGL_2(q)\gen{\phi})) \geq \frac{1}{4}(q^{1/2}-1)(q-1)(q^{3/2}-3q-4).
\end{equation}
Next, by Proposition \ref{casePSU3},
\begin{equation}
d_{gf} \geq \frac{1}{3}(q-1)(q^{1/2}+1)\left( \frac{q(q-q^{1/2}+1)}{(3,q^{1/2}+1)}(q^{1/2}-3) - (q^{3/2}-3q+3q^{1/2}-3) \right),
\end{equation}
which is positive for all $q > 9$.

Finally, we need to bound  $d_f$ from above.
Indeed, by Lemma \ref{lm:MVLES} at $m = 2$, we have
\begin{align*}
d_f & = \dim H_2(\A_2(L_f)) = \dim H_2(\A_2(\PSL_3(q)\gen{\phi}))\\
& \leq n_f \dim H_1(\A_2(\PSL_3(q^{1/2})))\\
& = \frac{q^{3/2}(q+1)(q^{3/2}+1)}{(3,q^{1/2}+1)}.
\end{align*}
Now we check with the given bounds that $n_g d_g + n_{gf} d_{gf} - d_f$ is positive if and only if $q > 9$.
Indeed, if $q = 9$, similar arguments show $H_3(\A_2(LB)) = 0$ since $d_g = 0$ by Example \ref{casePSL29} and $d_{gf} = 0$ by Proposition \ref{casePSU3}.

We conclude that every $2$-extension of $\PSL_3(q)$ satisfies $\QD_2$, except for $\PSL_3(3)$ extended by a graph automorphism and for $\PSL_3(9)$ extended by field and graph automorphisms, which actually fail $\QD_2$.
\end{proof}

\section{The Quillen dimension property on exceptional groups of Lie type}
\label{sec:QDexcp}

We use the results of the preceding sections to show that, with only finite exceptions, the $2$-extensions of the exceptional groups of Lie type satisfy $\QD_2$.
For that purpose, it will be convenient to recall first which $2$-extension can arise in each case.
Table \ref{tab:outerExceptionals} records the $2$-ranks of the exceptional groups of Lie type in adjoint version and the structure of the outer automorphism group.
From this, we can compute the possible $2$-extensions in each case.
Recall that we follow the terminology of \cite{GLS98}.
The $2$-ranks were extracted from \cite{CSeitz} and \cite[Prop. 4.10.5]{GLS98}.

\begin{table}[ht]
    \centering
    \begin{tabular}{|c|c|c|c|}
        \hline
        Group & $2$-rank & $\Outdiag$ & $\Out / \Outdiag$\\
        \hline
        ${}^3D_4(q)$ & $3$& $1$ & $3\Phi$ \\
        $G_2(q)$ & $3$ & $1$ & \pbox{8.5cm}{\centering $\Phi\Gamma$, where $|\Phi\Gamma:\Gamma|=2$ if $q=3^a$, and $\Gamma = 1$ otherwise}
        \\
        ${}^2G_2(q)$ & $3$ & $1$ & $\Phi$ (odd order) \\
        $F_4(q)$ & $5$ & $1$ & $\Phi$ \\
        $E_6(q)$ & $6$ & $(3,q-1)$ & $\Phi\times \Gamma$, $\Gamma \groupiso \Cyclic_2$ \\
        ${}^2E_6(q)$ & $6$ & $(3,q+1)$ & $2\Phi$  \\
        $E_7(q)$ & $8$ & $2$ & $\Phi$ \\
        $E_8(q)$ & $9$ & $1$ & $\Phi$ \\
        \hline
    \end{tabular}
    \caption{$\Out/\Outdiag$ is cyclic unless specified; $\Phi = \Aut(\GF{q})\groupiso \Cyclic_a$, where $q = r^a$, $r$ is an odd prime, and the usual conventions for the twisted types hold. Also, $\Gamma$ is a set of graph automorphisms.}
    \label{tab:outerExceptionals}
\end{table}

\subsection{Cases \texorpdfstring{$G_2(q)$}{G2(q)} and \texorpdfstring{${}^2G_2(q)$}{2G2(q)}.}

We start by proving that the Ree groups ${}^2G_2(q)$ satisfy $\QD_2$ if and only if $q \neq 3$.
Note that, by Table \ref{tab:outerExceptionals} for example, ${}^2G_2(q)$ has no non-trivial $2$-extension.

\begin{proposition}
\label{prop:caseRee}
Let $L$ be the Ree group ${}^2G_2(q)$, where $q$ is a power of $3$ by an odd positive integer.
Then the following hold:
\begin{enumerate}
    \item $L$ has no non-trivial $2$-extensions.
    \item A Sylow $2$-subgroup of $L$ is an elementary abelian group of order $8$, so $m_2(L) = 3$.
    \item $2$-subgroups of equal order of $L$ are conjugated.
    \item $L$ satisfies $\QD_2$ if and only if $q \neq 3$.
    Moreover, if $q >3$ then
    \begin{equation}
    \label{eq:dimH2case2G2}
    \dim H_2(\A_2(L)) \geq  \widetilde{\upchi}(\A_2(L)) = \frac{1}{21} (q^2 - 1) (q^5 - 8 q^4 + 15 q^3 + 21) > 0.
\end{equation}
    \item For $q = 3$, $\A_2(L) = \A_2(\PSL_2(8))$ is homotopy equivalent to a discrete space of $8$ points.
\end{enumerate}
\end{proposition}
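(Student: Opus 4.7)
The plan is to dispatch the five parts in order; the only nontrivial work is the Euler characteristic computation in (4)--(5). Part (1) is immediate from Table~\ref{tab:outerExceptionals}: $\Out(L) = \Phi$ is cyclic of odd order $2n+1$ (where $q = 3^{2n+1}$), so $L$ admits no non-trivial $2$-extensions. For (2), from $|L| = q^3(q-1)(q+1)(q^2-q+1)$ and $q \equiv 3 \pmod 8$ one reads off $|L|_2 = |q-1|_2 \cdot |q+1|_2 = 2\cdot 4 = 8$; combining with the centraliser formula $C_L(t) \cong \Cyclic_2 \times \PSL_2(q)$ (Table~4.5.1 of \cite{GLS98}) and the Klein-four Sylow $2$-subgroup of $\PSL_2(q)$ when $q \equiv 3 \pmod 8$, a Sylow $2$-subgroup of $L$ is $\langle t\rangle \times V_0 \cong 2^3$, so $m_2(L) = 3$.

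For (3), I will appeal to the classical description (due to Ward) of the Sylow $2$-normaliser in Ree groups: $N_L(P) \cong 2^3 : (\Cyclic_7 : \Cyclic_3)$, of order $168$ independently of $q$, with $\Cyclic_7 : \Cyclic_3$ acting on $P$ as $\FF_8^\times : \Gal(\FF_8/\FF_2) \leq \Aut(P) = \GL_3(2)$. This action is transitive on both the $7$ involutions and the $7$ four-subgroups of $P$, and, combined with Sylow's theorem, yields a single $L$-conjugacy class of subgroups of each order $2, 4$ and $8$.

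For (4), I apply Proposition~\ref{propEulerCharQuillenPoset} with one summand per orbit from (3). The normaliser orders needed are $|N_L(\langle t\rangle)| = q(q^2-1)$, $|N_L(P)| = 168$, and $|N_L(V)| = 6(q+1)$ for a four-group $V$. For the last, $C_L(V) = \langle t\rangle \times \Dihedral_{q+1}$ has order $2(q+1)$ (using $C_{\PSL_2(q)}(v) \cong \Dihedral_{q+1}$ for $q \equiv 3 \pmod 4$); the equalities $|N_L(V)|_2 = |C_L(V)|_2 = 8$ force $N_L(V)/C_L(V)$ to be an odd subgroup of $\Aut(V) = \Sym_3$, and a $\Cyclic_3$ is realised by the stabiliser of $V$ inside $N_L(P)/P \cong \Cyclic_7 : \Cyclic_3$, which acts faithfully on $V$ because $\Dihedral_{q+1}$ contains no element of order $3$ (as $3 \nmid q+1$). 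Substituting and simplifying yields
\begin{equation*}
\widetilde{\upchi}(\A_2(L)) \;=\; \frac{1}{21}(q^2-1)(q^5 - 8q^4 + 15q^3 + 21),
\end{equation*}
which is positive for $q \geq 27$. For $q > 3$ the group $L$ is simple and does not appear in the list of Theorem~\ref{thm:connectedA_2list}, so $\A_2(L)$ is connected; since its dimension is $m_2(L)-1 = 2$, we obtain $\dim H_2(\A_2(L)) \geq \widetilde{\upchi}(\A_2(L)) > 0$, giving (4). For (5), $[L:\PSL_2(8)] = 3$ is odd so $\A_2(L) = \A_2(\PSL_2(8))$ as posets, and in $\PSL_2(8)$ each involution centraliser coincides with its (unique) Sylow $2$-subgroup; distinct Sylows therefore intersect trivially, so $\A_2(\PSL_2(8))$ decomposes as the disjoint union of the $\A_2(P)$, each contractible via its maximum element $P$, producing the claimed discrete homotopy type and $H_2 = 0$.

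The main obstacle is pinning down the group-theoretic data in (3) and (4): the $q$-independence of $|N_L(P)| = 168$ and the identification of the extra $\Cyclic_3$ inside $N_L(V)$. These are classical facts in the Ree-group literature going back to Ward, and once invoked the remainder of the argument is a bookkeeping exercise with the Euler characteristic formula.
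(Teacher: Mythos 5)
Your proposal is correct and follows essentially the same route as the paper: items (1)--(3) come from the classical Ward/Kleidman facts, the case $q>3$ is handled by the Euler characteristic formula of Proposition \ref{propEulerCharQuillenPoset} with the same three orbit normaliser orders $q(q^2-1)$, $6(q+1)$ and $168$ together with connectivity from Theorem \ref{thm:connectedA_2list}, and the case $q=3$ reduces to $\A_2(\PSL_2(8))$ with its TI elementary abelian Sylow $2$-subgroups. The only cosmetic difference is that you rederive $|N_L(V)|=6(q+1)$ (and, for item (5), the poset equality $\A_2(L)=\A_2(\PSL_2(8))$ should be justified by normality of $L'$ with odd quotient rather than odd index alone), whereas the paper simply quotes Kleidman's Theorem C for this normaliser data.
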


\begin{proof}
Items (1-3) are well-known facts about the Ree groups and can be found in \cite{Ward}.

If $L = {}^2G_2(3)$, then $L' = \PSL_2(8)$ has index $3$ in $L$, and $\A_2(L) \cong \A_2(\PSL_2(8))$ is homotopy equivalent to a discrete space with $8$ points.
Since $m_2(L) = 3$, we conclude that $L$ fails $\QD_2$ for $q=3$.
This proves item (5) and the ``only if'' part of item (4).

Now suppose that $q\neq 3$ and $L = {}^2G_2(q)$.
Since $\A_2(L)$ has dimension $2$ by item (2), we show that its second homology group is non-zero.
To that end, it is enough to see that its Euler characteristic is positive since $\A_2(L)$ is connected for $q\neq 3$ by Theorem \ref{thm:connectedA_2list}.
Indeed, $\widetilde{\upchi}(\A_2(L)) = \dim H_2(\A_2(L)) - \dim H_1(\A_2(L)) \leq \dim H_2(\A_2(L))$.

We invoke Theorem C of \cite{KG2q} to describe the normalisers of $2$-subgroups:
the centraliser of an involution is $2\times \PSL_2(q)$, the normaliser of a four-subgroup is $(2^2\times \Dihedral_{\frac{q+1}{2}}):3$, and the normaliser of a Sylow $2$-subgroup is $2^3:7:3$.
From this information, items (2,3) and Proposition \ref{propEulerCharQuillenPoset}, we can compute the Euler characteristic of $\A_2(L)$:
\begin{align*}
    \widetilde{\upchi}(\A_2(L)) & = -1 + \frac{|L|}{2|\PSL_2(q)|} - 2 \frac{|L|}{6(q+1)} + 8 \frac{|L|}{168}\\
    & =  -1 + q^3(q^3+1)(q-1) \left( \frac{1}{q(q^2-1)} - \frac{1}{3(q+1)} + \frac{1}{21} \right) \\
    & = \frac{1}{21} (q^2 - 1) (q^5 - 8 q^4 + 15 q^3 + 21).
\end{align*}
Since the polynomial $q^5 - 8 q^4 + 15 q^3 + 21$ is positive for every prime power $q \neq 4$, we conclude that $H_2(\A_2(L))\neq 0$.
In consequence, $L$ satisfies $\QD_2$ if $q\neq 3$.
This completes the proof of item (4), and hence of this proposition.
\end{proof}

For the case $G_2(q)$, we refer the reader to the classification of maximal subgroups of $G_2(q)$ by P. Kleidman \cite{KG2q}.
We will follow the terminology of that article.

\begin{proposition}
\label{caseG2}
Let $L = G_2(q)$, with $q$ odd.
Then every $2$-extension of $L$ satisfies $\QD_2$, except possibly for the $2$-extensions of $G_2(3)$ and the $2$-extension of $G_2(9)$ by a field involution.
\end{proposition}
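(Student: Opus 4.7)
My plan is to establish Proposition \ref{caseG2} by exhibiting, for each $2$-extension $LB$ of $L = G_2(q)$, a subgroup $H \leq LB$ with $m_2(H) = m_2(LB)$ and $H$ satisfying $\QD_2$, so that the Inclusion Lemma \ref{lm:inclusion} applies. First I would enumerate the $2$-extensions using Table \ref{tab:outerExceptionals}: since $\Outdiag(L) = 1$ and $\Out(L)/\Outdiag(L) = \Phi\Gamma$ with $\Phi \cong \Cyclic_a$ (field automorphisms) and $\Gamma$ nontrivial only when $q = 3^a$, the nontrivial $2$-extensions arise from a field involution $\phi$ (when $2 \mid a$), an exceptional graph-field involution $\delta$ (for suitable $q = 3^a$), and from products of these. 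Note also that a maximal parabolic has even index $(q^6-1)/(q-1)$ in $L$ and $2$-rank only $2$, so Lemmas \ref{lm:QDfromParabolicWithMxlRank} and \ref{lm:QDfromParabolicWith2Sylow} do not apply directly; we must use non-parabolic maximal rank subgroups.

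For the trivial extension and for a $\langle\phi\rangle$-extension, my candidate is a subsystem subgroup $M_\epsilon \cong \SL_3^{\epsilon}(q).2$ (with $\epsilon \in \{+,-\}$ selecting the $A_2$ or twisted $A_2$ subsystem of $G_2$), where the outer $2$ is a graph automorphism of $\SL_3^{\epsilon}(q)$. By Table 4.5.1 of \cite{GLS98} one computes $m_2(M_\epsilon) = 3 = m_2(L)$, and since the centre of $\SL_3^{\epsilon}(q)$ has odd order $(3, q - \epsilon)$, Lemma \ref{lm:pprimequotient} identifies $\A_2(M_\epsilon)$ with $\A_2(\PSL_3^{\epsilon}(q).2)$. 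Propositions \ref{casePSU3} and \ref{casePSLOddDim} then yield $\QD_2$ for at least one of $M_+, M_-$ whenever $q \ne 3$ (in particular, $M_- \cong \PSU_3(9).2$ handles the case $q = 9$ since $\PSU_3(9)$ satisfies $(\text{E-}\QD)$), so $L$ itself satisfies $\QD_2$ for all $q \neq 3$ by Lemma \ref{lm:inclusion}. The same strategy treats $L\langle\phi\rangle$: a $\phi$-stable conjugate of $M_\epsilon$ exists by a root-system argument, Lemma \ref{lm:prankExtensions} gives $m_2(M_\epsilon\langle\phi\rangle) = 4 = m_2(L\langle\phi\rangle)$ via the computation $C_{M_\epsilon}(\phi) \cong \SL_3^{\epsilon}(q^{1/2}).2$, and the combined extension $\PSL_3^{\epsilon}(q).\langle\phi,\gamma\rangle$ satisfies $\QD_2$ by Propositions \ref{casePSU3} and \ref{casePSLOddDim} \emph{except} at $q = 9$, which accounts for the stated exception. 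For the graph-field involution $\delta$ in characteristic $3$, Proposition \ref{prop:centFieldAut} identifies $C_L(\delta)$ with a Ree group ${}^2G_2(q_0)$, and Proposition \ref{prop:caseRee} combined with Lemma \ref{lm:MVLES} yields $\QD_2$ provided $q_0 \ne 3$; the mixed extension $L\langle\phi,\delta\rangle$ is treated by iterating these arguments.

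The main obstacle will be to justify the $\phi$- (and $\delta$-)invariance of the chosen subsystem subgroup and to correctly identify the action of the field automorphism on $M_\epsilon$, so that $M_\epsilon\langle\phi\rangle$ realises precisely the combined field-plus-graph extension of $\PSL_3^{\epsilon}(q)$ treated in Propositions \ref{casePSU3} and \ref{casePSLOddDim}. A further delicate point is the choice of sign $\epsilon$ according to $q \pmod{3}$, so as to avoid the small-$q$ failures of those propositions. Only when $q = 3$ do both $M_+ \cong \SL_3(3).2$ and $M_- \cong \SU_3(3).2$ fail $\QD_2$, so the method gives no information there and $G_2(3)$ must be left as an unavoidable exception; likewise, at $q = 9$ both sign choices of $\epsilon$ fail for the $\langle\phi\rangle$-extension (since the relevant field-plus-graph extensions of $\PSL_3(9)$ and $\PSU_3(9)$ both fail $\QD_2$), giving the second exception stated in the proposition.
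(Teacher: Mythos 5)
The parts of your plan for $L = G_2(q)$ itself and for $L\gen{\phi}$ ($\phi$ a field involution) essentially coincide with the paper's argument: take Kleidman's maximal subgroup $\SL_3(q){:}2$ (graph automorphism on top), pass to the central quotient $\PSL_3(q)\gen{\gamma}$ via Lemma \ref{lm:pprimequotient}, check the $2$-rank is $3$ (resp.\ $4$ for the $\phi$-extension), and quote Proposition \ref{casePSLOddDim} together with Lemma \ref{lm:inclusion}; this correctly produces the exceptions $q=3$ and $q=9$. One caveat on your $\epsilon=-$ variant: for the $\phi$-extension the group you call $\PSU_3(q).\gen{\phi,\gamma}$ is not a $2$-extension of $\PSU_3(q)$ at all, since $\Out(\PSU_3(q))$ has cyclic Sylow $2$-subgroups; Proposition \ref{casePSU3} therefore does not cover it. This does not hurt you because $\epsilon=+$ already suffices whenever $q\neq 9$, but the ``choice of sign $\epsilon$'' mechanism you describe is not actually available there.

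The genuine gap is the graph-field case $q = 3^a$ with $a$ odd, $q\geq 27$. Writing ``Proposition \ref{prop:caseRee} combined with Lemma \ref{lm:MVLES} yields $\QD_2$'' is not a proof: the usable lower bound, item (4) of Lemma \ref{lm:MVLES}, reads $\dim H_3(\A_2(L\gen{\delta})) \geq \sum \dim H_2(\A_2(C_L(\delta^x))) - \dim H_2(\A_2(L))$ (or with $X$ in place of $\A_2(L)$), and you have no upper bound whatsoever for $\dim H_2(\A_2(G_2(q)))$ -- it is computed nowhere, and by Theorem B of \cite{KG2q} every maximal subgroup of $L\gen{\delta}$ containing $\delta$ is $2$-local or of $2$-rank at most $3$, so the maximal-subgroup shortcut is also unavailable (the paper points this out explicitly). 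The paper's actual argument is a contradiction via item (5): if $L\gen{\delta}$ failed $\QD_2$, one would get an embedding of $\bigoplus_{L/C_L(\delta)} H_2(\A_2({}^2G_2(q)))$ into $H_2(X)$, where $X$ is the union of the centraliser posets; then $\dim H_2(X)$ is bounded \emph{above} by counting the height-$2$ elements of $X$ -- the $L$-conjugates of the self-centralising elementary abelian Sylow $2$-subgroup $A$ of ${}^2G_2(q)$, with $N_L(A) = A.\PSL_3(2)$ -- times the $8$ spheres below each, and this is beaten by $|L:C_L(\delta)|\cdot\widetilde{\upchi}(\A_2({}^2G_2(q)))$ from Proposition \ref{prop:caseRee} once $q\geq 7$. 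Note also $C_L(\delta)\groupiso {}^2G_2(q)$ over the \emph{same} field, so the only problematic value is $q=3$, already excepted. Without some such counting step your treatment of the graph-field extension does not go through. Finally, the ``mixed extension $L\gen{\phi,\delta}$'' you propose to handle ``by iterating'' does not exist: $\Out(G_2(3^a))$ is cyclic of order $2a$, so $G_2(q)$ has exactly one nontrivial $2$-extension, by a field involution when $a$ is even and by a graph-field involution when $a$ is odd, never by a four-group.
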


\begin{proof}
Let $L = G_2(q)$.
We prove first that $G_2(q)$ and its extension by a field automorphism of order $2$ satisfy $\QD_2$, by exhibiting a maximal subgroup of the same rank that satisfies $\QD_2$.

By Theorem A in \cite{KG2q}, $G_2(q)$ contains a subgroup $K_+ = \SL_3(q):2$.
Let $L_+ = F^*(K_+) \groupiso \SL_3(q)$ and $Z = Z(L_+)$.
Then $L_0:= L_+/Z = \PSL_3(q)$ and $H_0:=K_+/Z = L_0 \gen{\gamma}$, where $\gamma$ induces a graph automorphism on $L_0$ (see Proposition 2.2 and its proof in \cite{KG2q}).
By Proposition \ref{casePSLOddDim}, $L_0$ satisfies $\QD_2$ if $q\neq 3$, so $H_0$ satisfies $\QD_2$.

On the other hand, $m_2(L) = 3$ by Table \ref{tab:outerExceptionals}, and also $m_2(L_0) = 3$ by the proof of Proposition \ref{casePSLOddDim}.
Recall from Lemma \ref{lm:pprimequotient} that
\[ \widetilde{H}_*(\A_2(H_0)) = \widetilde{H}_*(\A_2(K_+/Z)) \subseteq \widetilde{H}_*(\A_2(K_+)).\]
In particular, we get the following inclusions between the top-degree homology groups
\[ \widetilde{H}_2(\A_2(H_0)) \subseteq \widetilde{H}_2(\A_2(K_+))\subseteq \widetilde{H}_2(\A_2(L)),\]
which show that $L$ satisfies $\QD_2$ if $q\neq 3$.

Next, a nontrivial $2$-extension of $L=G_2(q)$ can only be given by field automorphisms of order $2$ if $q$ is not a power of $3$.
Moreover, by the construction of the subgroup $K_+$ given in \cite{KG2q}, field automorphisms of $G_2(q)$ induce field automorphisms on (a suitable conjugate of) $K_+$, and hence on the quotient $H_0$.
Thus, for $B\in \Outposet_2(L)$ inducing field automorphisms, we may take $K_+$ fixed by $B$, and then $K_+B\groupiso \SL_3(q):(2\times B)$ after a suitable choice of conjugates (recall that $\Out(\SL_3(q)) = (3,q-1):(\Aut(\GF{q})\times \Gamma)$, where $\Gamma = 2$ is a group of graph automorphisms).
Similar as before, we have a split extension $K_+B /Z = L_0B'$, where $B'=\gen{\gamma}\times B \in \Outposet(L_0)$.
By Proposition \ref{casePSLOddDim}, $L_0B'$ satisfies $\QD_2$ if $q\neq 9$.
Analogously to the previous case, $m_2(L_0B') = 4 = m_2(L) = m_2(K_+B)$, and we get an inclusion in the degree $3$ homology groups, showing that $K_+B$ and $LB$ satisfy $\QD_2$.
Therefore, an extension of $L$ by a field automorphism of order $2$ satisfies $\QD_2$ if $q\neq 9$.

It remains to analyse the case $q = 3^a$.
By Table 4.5.1 of \cite{GLS98} (see also Theorem 2.5.12 of \cite{GLS98}), only field or graph-field automorphisms can arise in $\Aut(L)$.
We have shown above that the extension of $L$ by a field automorphism of order $2$ satisfies $\QD_2$ if $q\neq 9$.
Thus we need to prove that if $t$ is a graph-field automorphism of $L$, then $L\gen{t}$ satisfies $\QD_2$.
In that case, $q = 3^{2a+1}$ and by Proposition \ref{prop:centFieldAut}, $C_{L}(t) = {}^2G_2(q)$, which has $2$-rank $3$.
Therefore $m_2(L\gen{t}) = 4$.
However, by Theorem B of \cite{KG2q}, every maximal subgroup of $L\gen{t}$ containing $t$ is either $2$-local or has $2$-rank at most $3$.
This shows that we cannot proceed as before via maximal subgroups.
In view of this, we will proceed by using the long exact sequence of Lemma \ref{lm:MVLES}.

We have subgroups $M_0:=C_L(t) = {}^2G_2(q)$, $M_1:= G_2(3)\gen{t} \leq L\gen{t}$ and $M_2 := {}^2G_2(3)$ such that $M_2\leq M_1\cap M_0$.
Fix $A$ a Sylow $2$-subgroup of $M_2$.
By Proposition \ref{prop:caseRee}(2) and \cite[Lm. 2.4]{KG2q}, $A$ is also a Sylow $2$-subgroup of $M_0$ and it is self-centralising in $L$, i.e. $C_L(A) = A$.
A direct computation also shows that $N_{M_2}(A) = A.\PSL_3(2)$, which immediately implies $N_L(A) = A.\PSL_3(2)$.

Now, suppose by the way of contradiction that $L\gen{t}$ fails $\QD_2$, that is, the homology group $H_3(\A_2(L\gen{t}))$ vanishes.
Recall that $C_L(t) = {}^2G_2(q)$ and there is a unique $L$-conjugacy class of involutions $t'\in L\gen{t} - L$ by Proposition \ref{prop:centFieldAut}(4).
Let $X = \bigcup_{C_L(t)x \in L/C_L(t)} \A_2(C_L(t^x))$.
By Lemma \ref{lm:MVLES}, we get inclusions
\begin{equation}
\label{eq:caseG2withgraphs}
    \bigoplus_{L/C_L(t)} H_2(\A_2(C_L(t))) \hookrightarrow  H_2 \left(X \right) \hookrightarrow H_2(\A_2(L)).
\end{equation}
Set
\begin{align*}
   d  & := \dim H_2(X), \\
   d' & := \dim \bigoplus_{L/C_L(t)} H_2(\A_2(C_L(t))) = |L:C_L| \dim H_2(\A_2({}^2G_2(q))) .
\end{align*}
Eq. (\ref{eq:caseG2withgraphs}) shows that $d' \leq d$.
However, we will prove that $d < d'$, arriving then at a contradiction.

On one hand, we have that $X$ is a union of $\A_2$-posets. Therefore, below each point, we have a wedge of spheres of maximal possible dimension.
This means that the homology of $X$ can be obtained from the chain complex that in degree $i$ is freely generated by the spheres below each point of $X$ of height $i$.
In particular, for $i = 2$, the points of height $2$ correspond to the conjugates of $A$, the fixed Sylow $2$-subgroup of $M_0 = C_L(t)$ and $M_2$.
Thus,
\begin{align*}
    d = \dim{} &{} H_2(X) \leq |L:N_L(A)| \cdot  \# \text{ spheres below $A$} = \frac{q^6(q^6-1)(q^2-1)}{168}.
\end{align*}

On the other hand, by Proposition \ref{prop:caseRee}(4),
\[ d' \geq |L:C_L(t)| \cdot \widetilde{\upchi}(\A_2({}^2G_2(q))) =
q^3(q^3-1)(q+1)\frac{1}{21}(q^2-1)(q^5-8 q^4+15q^3+21) .\]
Finally, from these bounds for $d$ and $d'$, it is not hard to conclude that $d'>d$ for all prime power $q\geq 7$, which is our case.
This gives a contradiction to Eq. (\ref{eq:caseG2withgraphs}), and thus shows that $H_3(\A_2(L\gen{t})) \neq 0$, that is, $L\gen{t}$ satisfies $\QD_2$.
This finishes the proof of the proposition.
\end{proof}

\begin{example}
\label{ex:exceptionsG2}
Let $L = G_2(3)$.
We show that $\A_2(L)$ is homotopy equivalent to a wedge of spheres of dimension $1$.
In particular, since $m_2(L) = 3$, $L$ fails $\QD_2$.
Moreover, by Lemma \ref{lm:MVLES}, also the unique nontrivial $2$-extension of $L$ (by a graph-field automorphism) fails $\QD_2$.

We construct a subposet of $\A_2(L)$ of dimension $1$ and homotopy equivalent to $\A_2(L)$.
First, take the subposet $\mathfrak{i}(\A_2(L)) = \{A\in \A_2(L) \tq A= \Omega_1(Z(\Omega_1(C_L(A))))\}$, which is homotopy equivalent to $\A_2(L)$ (see \cite[Rk. 4.5]{Pwebb}).
Next, there are two conjugacy classes of elementary abelian $2$-subgroups of order $8$, and both are contained in $\mathfrak{i}(\A_2(L))$.
For one of these classes, say represented by $A$, the normalizer $N_L(A)$ has order $192$.
Then it can be shown that $\mathfrak{i}(\A_2(L))_{<A}$ is contractible.
Therefore, if we remove the $L$-conjugates of $A$ from $\mathfrak{i}(\A_2(L))$ we get a subposet $\mathfrak{s}\mathfrak{i}(\A_2(L))$ homotopy equivalent to $\mathfrak{i}(\A_2(L))$.
Now, there is a unique conjugacy class of four-subgroups in this new subposet $\mathfrak{s}\mathfrak{i}(\A_2(L))$, and each such subgroup is contained in a unique element of order $8$ of $\mathfrak{s}\mathfrak{i}(\A_2(L))$.
Again, we can remove all the four-subgroups from $\mathfrak{s}\mathfrak{i}(\A_2(L))$ and obtain a new subposet $T$ homotopy equivalent to $\A_2(L)$.
Since $T$ consists only of elements of order $2$ and $8$, we conclude that $T$ has dimension $1$.
Finally, an extra computation shows that $\widetilde{\upchi}(\A_2(L)) = -11584$.
Therefore $\A_2(L)$ is homotopy equivalent to a wedge of $11584$ spheres of dimension $1$.
In particular, $L$ fails $\QD_2$.

This also shows that $L = G_2(9)$ extended by a field automorphism of order $2$ fails $\QD_2$: if $\phi$ is a field involution, then $C_L(\phi) = G_2(3)$, and thus $H_2(\A_2(C_L(\phi))) = 0$ by the previous computation. Then by Lemma \ref{lm:MVLES}, we conclude that $H_3(\A_2(L)) = 0$.
\end{example}

\subsection{Cases \texorpdfstring{${}^3D_4$}{3D4(q)} and \texorpdfstring{$F_4(q)$}{F4(q)}}

\begin{proposition}
\label{case3D4}
The group $L = {}^3D_4(q)$ satisfies (E-$\QD$) if $q\neq 9$ is odd.
Also ${}^3D_4(9)$ satisfies $\QD_2$.
\end{proposition}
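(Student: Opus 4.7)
The plan is to enumerate the $2$-extensions of $L = {}^3D_4(q)$, compute their $2$-ranks, and match each against a subgroup of equal $2$-rank already known to satisfy $\QD_2$, invoking Lemma \ref{lm:inclusion}.

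By Table \ref{tab:outerExceptionals}, $\Out(L) = 3\Phi$ has trivial outer-diagonal part and its Sylow $2$-subgroup is cyclic, contained in $\Phi$. So the only $2$-extensions to consider are $B = 1$ and, when $q$ is a perfect square, $B = \langle \phi \rangle$ for a field involution $\phi$. Proposition \ref{prop:centFieldAut} gives $C_L(\phi) \cong {}^3D_4(q_0)$ with $q = q_0^2$, so $m_2(L) = 3$ from Table \ref{tab:outerExceptionals} and $m_2(L\langle \phi \rangle) = 4$ by Lemma \ref{lm:prankExtensions}.

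The central tool is the classical embedding $G_2(q) \leq L$, coming from $G_2$ as the triality-fixed subgroup of $\Spin_8$, for which $m_2(G_2(q)) = 3 = m_2(L)$. A field involution of $L$ can be conjugated to stabilise this copy of $G_2(q)$ and to restrict to a field involution there, whose centraliser is $G_2(q_0)$ by Proposition \ref{prop:centFieldAut}; hence $m_2(G_2(q)\langle \phi \rangle) = 4 = m_2(L\langle\phi\rangle)$. By Proposition \ref{caseG2}, $G_2(q)$ satisfies $\QD_2$ for every odd $q \neq 3$ and $G_2(q)\langle\phi\rangle$ satisfies $\QD_2$ for every odd square $q \neq 9$, and Lemma \ref{lm:inclusion} transports these properties to $L$ and $L\langle\phi\rangle$ respectively. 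This settles every case of the proposition except $L = {}^3D_4(3)$ with $B = 1$; in particular, $L = {}^3D_4(9)$ with $B = 1$ is handled by $G_2(9) \leq L$.

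The remaining case is ${}^3D_4(3)$, and here lies the main obstacle, since $G_2(3)$ fails $\QD_2$ by Example \ref{ex:exceptionsG2}, and moreover the natural $2$-local candidates (parabolic subgroups, the maximal-rank subgroup $(\SL_2(27) \circ \SL_2(3)).2$, normalisers of elementary abelian $2$-subgroups) all have non-trivial $2$-core (typically containing the normal $Q_8 \leq \SL_2(3)$), so their $\A_2$-posets are contractible and cannot be used. I would therefore treat $q = 3$ by a direct analysis of $\A_2({}^3D_4(3))$, exploiting that the poset is connected of dimension $2$ (by Theorem \ref{thm:connectedA_2list} and $m_2(L) = 3$), so that any proof of $\widetilde{\upchi}(\A_2(L)) < 0$ forces $\widetilde{H}_2(\A_2(L)) \neq 0$ and hence $\QD_2$. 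Concretely, Proposition \ref{propEulerCharQuillenPoset} reduces this Euler characteristic to a sum over conjugacy classes of elementary abelian $2$-subgroups of ${}^3D_4(3)$ and their normalisers in $L$, data accessible from the description of involution centralisers in Table 4.5.1 of \cite{GLS98} together with Kleidman's classification of maximal subgroups of ${}^3D_4(q)$, and verifiable in GAP. An alternative route follows the pattern of the graph-field case in the proof of Proposition \ref{caseG2}: applying Lemma \ref{lm:MVLES} to a carefully chosen chain of subgroups sharing a common Sylow $2$-subgroup with $L$ (for instance, $G_2(3) \leq L$, which has $|G_2(3)|_2 = 64 = |L|_2$), using the known wedge-of-$1$-spheres description of $\A_2(G_2(3))$ to bound the relevant homologies from below.
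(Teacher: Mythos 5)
For $q \neq 3$ your argument coincides with the paper's: the subgroup $G_2(q) \leq {}^3D_4(q)$ centralised by the order-$3$ graph (triality) automorphism, the fact that a field involution can be arranged to restrict to a field involution of this $G_2(q)$, the rank equalities $m_2(G_2(q)) = 3 = m_2(L)$ and $m_2(G_2(q)\langle\phi\rangle) = 4 = m_2(L\langle\phi\rangle)$, and then Proposition \ref{caseG2} plus Lemma \ref{lm:inclusion}; your handling of ${}^3D_4(9)$ (only $B=1$ claimed) is also exactly what the statement requires.

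For $q = 3$ your primary route is again the paper's (a direct Euler-characteristic computation for $\A_2({}^3D_4(3))$, exploiting connectedness and $\dim \A_2(L) = 2$), but your stated criterion has the sign reversed, and this is a genuine error. With the convention of Proposition \ref{propEulerCharQuillenPoset}, connectedness gives $\widetilde{\upchi}(\A_2(L)) = \dim H_2(\A_2(L)) - \dim H_1(\A_2(L))$, so it is $\widetilde{\upchi}(\A_2(L)) > 0$ that forces $H_2 \neq 0$ (this is precisely how the paper argues both here and for ${}^2G_2(q)$ in Proposition \ref{prop:caseRee}); a negative value would only show $H_1 \neq 0$, which is homology in the wrong degree since $m_2(L) = 3$. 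The actual GAP value is $\widetilde{\upchi}(\A_2({}^3D_4(3))) = 882634225472 > 0$, so under your test the computation would appear inconclusive even though the method succeeds once the sign is corrected. Your proposed fallback via Lemma \ref{lm:MVLES} also does not work as stated: that lemma requires a \emph{normal} subgroup $L \normal G$ with $\Outposet_G(L)$ consisting of cyclic groups, and $G_2(3)$ is not normal in ${}^3D_4(3)$; moreover $\A_2(G_2(3))$ is a wedge of $1$-spheres (Example \ref{ex:exceptionsG2}), so it carries no degree-$2$ homology with which to bound $H_2(\A_2({}^3D_4(3)))$ from below. Fixing the inequality to $\widetilde{\upchi} > 0$ and dropping the fallback recovers the paper's proof.
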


\begin{proof}
Recall that $m_2(L) = 3$ by Table \ref{tab:outerExceptionals}.
Then a graph automorphism of order $3$ of ${}^3D_4(q)$ centralises a subgroup $K = G_2(q)$.
Also, if $\phi$ denotes a field automorphism of order $2$ of $L$, then, after choosing a suitable conjugate, we may assume that $\phi$ induces a field automorphism on $K$.
By Proposition \ref{caseG2} and its proof, $m_2(K) = 3 = m_2(L)$, $m_2(K\gen{\phi}) = 4 = m_2(L\gen{\phi})$, and both $K$ and $K\gen{\phi}$ satisfy $\QD_2$ for $q\neq 3,9$ respectively.
Also note that $G_2(9)$ satisfies $\QD_2$.
By Lemma \ref{lm:inclusion}, $L$ and $L\gen{\phi}$ satisfy $\QD_2$ if $q\neq 3,9$ respectively.
Since these are the only possible $2$-extensions of $L$ by Table \ref{tab:outerExceptionals}, this concluded with the proof of our proposition for $q\neq 3$.

If $q = 3$ then a computation of the Euler characteristic of $L$ in GAP shows that $\widetilde{\upchi}(\A_2(L)) = 882634225472$.
Since $\A_2(L)$ is connected by Theorem \ref{thm:connectedA_2list}, we see that $H_2(\A_2(L))\neq 0$, that is, $L$ satisfies $\QD_2$.
\end{proof}

\begin{proposition}
\label{caseF4}
If $L = F_4(q)$, with $q\neq 3,9$ odd, then $L$ satisfies (E-$\QD$).
Also $F_4(9)$ satisfies $\QD_2$.
\end{proposition}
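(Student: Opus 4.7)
The plan is to adapt the strategy of Propositions \ref{caseG2} and \ref{case3D4} to $L = F_4(q)$. By Table \ref{tab:outerExceptionals}, $m_2(L) = 5$, $\Outdiag(L) = 1$, and $\Out(L)$ equals the cyclic group of field automorphisms, so the only non-trivial $2$-extension is $L\langle \phi\rangle$ for $\phi$ an order-$2$ field automorphism, which exists iff $q = q_0^2$. By Proposition \ref{prop:centFieldAut}, $C_L(\phi) \cong F_4(q^{1/2})$, so by Lemma \ref{lm:prankExtensions}, $m_2(L\langle\phi\rangle) = 6$. As in the earlier exceptional cases, it suffices to exhibit a subgroup $M \le L$ (resp.\ $M \le L\langle\phi\rangle$) of matching $2$-rank that satisfies $\QD_2$ and then invoke Lemma \ref{lm:inclusion}.

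My first attempt would be to take $M$ to be a maximal parabolic $P$ of $L$, chosen up to conjugation to be $\phi$-invariant. The end-node maximal parabolics of $F_4$ have Levi complements of type $B_3 \cdot T_1 \cong \Spin_7(q)\circ(q-1)$ and $C_3\cdot T_1 \cong \Sp_6(q)\circ(q-1)$; using the $2$-rank formulas in Proposition 4.10.5 of \cite{GLS98} together with Lemma \ref{lm:prankExtensions}, one checks that at least one of these achieves $m_2(P) = 5$ with a $1$-torus contributing the extra involution. Since field automorphisms preserve the standard parabolic structure and restrict to field automorphisms on the Levi, the same computation yields $m_2(P\langle\phi\rangle) = 6$. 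Then Lemma \ref{lm:QDfromParabolicWithMxlRank} establishes $\QD_2$ for $P$ and $P\langle\phi\rangle$ directly, hence for $L$ and $L\langle\phi\rangle$. If the parabolic $2$-rank falls short, the fallback is a maximal-rank subsystem subgroup coming from an involution centraliser: by Table 4.5.1 of \cite{GLS98} these are $\Spin_9(q)$ or $\SL_2(q)\circ \Sp_6(q)$, both of $2$-rank $5$ (the central involution accounts for the fifth rank), and $\QD_2$ for these classical groups is handled by an internal parabolic-of-odd-index argument via Lemma \ref{lm:QDfromParabolicWith2Sylow}, combined with Propositions \ref{casePSL2} and \ref{prop:productJoinQDp}.

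The exclusion of $q = 9$ from the (E-$\QD$) conclusion reflects the failure of the inductive base in the field-extension argument: for $q = 9$ one has $C_L(\phi) \cong F_4(3)$, which is itself an excepted case of Theorem \ref{thm:main}, so the rank-matching $\QD_2$-inheritance to $L\langle\phi\rangle$ breaks down; the exclusion of $q = 3$ similarly reflects degeneracies in the small-field specialisation of the Levi subgroup. Importantly, for $L = F_4(9)$ with trivial extension $B = 1$, no induction on $q$ is required, and the same subgroup argument yields $\QD_2$ for $L$ itself, proving the second sentence of the proposition.

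The main obstacle is the verification that the chosen $M$ truly realises $m_2(L) = 5$: this depends delicately on the $2$-rank of the classical Levi (which in turn depends on $q \pmod 4$) together with the torus contribution, and may require separate case analysis. If the parabolic route succeeds uniformly, the proof is short; if not, the back-up via involution-centraliser subgroups requires careful bookkeeping of central products and field-automorphism actions on the factors, mirroring the treatment carried out in Propositions \ref{caseG2} and \ref{case3D4}.
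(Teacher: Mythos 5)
Your reduction of the problem is fine (for $F_4(q)$, $q$ odd, the only nontrivial $2$-extension is by a field involution $\phi$, $m_2(L)=5$ and $m_2(L\langle\phi\rangle)=6$), but the core of your argument fails at the rank-matching step. No parabolic subgroup of $F_4(q)$ realises the $2$-rank $5$: since the unipotent radical has odd order, $m_2(P)$ equals the $2$-rank of the Levi, and the end-node Levis are $\Spin_7(q)\circ_2(q-1)$ and $\Sp_6(q)\circ_2(q-1)$ (in the $B_3$ case the Levi sits inside $C_{F_4(q)}(s)\cong\Spin_9(q)$ as the centraliser of a $\Spin_2$-torus, so the centre of $\Spin_7$ is identified with the involution of the torus — it is \emph{not} a direct product with a $1$-torus supplying a fifth independent involution). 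One has $m_2(\Sp_6(q))=3$ and $m_2(\Spin_7(q))=4$, and moreover every rank-$4$ elementary abelian of $\Spin_7(q)$ contains the centre and corresponds to the length-$7$ simplex code, so no square root of the central involution commutes with it; hence both end-node parabolics (and the $A_2A_1$-type ones) have $2$-rank at most $4$. Equivalently, the rank-$5$ elementary abelian subgroups of $F_4(q)$ are non-toral and lie in no proper parabolic, so Lemma \ref{lm:QDfromParabolicWithMxlRank} cannot be invoked this way — which is exactly why the paper does not use parabolics here but instead takes the maximal-rank subgroup $H\cong\PGL_2(q)\times G_2(q)$ of \cite{LS2}, of $2$-rank $2+3=5$, getting $\QD_2$ from Propositions \ref{prop:productJoinQDp}, \ref{casePSL2} and \ref{caseG2}, and for $L\langle\phi\rangle$ the subgroup $\PGL_2(q)\langle\phi\rangle\times G_2(q^{1/2})$ of $2$-rank $6$.

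Your fallback does not repair this. The centraliser $\Spin_9(q)$ does have $2$-rank $5$, but all of its parabolic subgroups have even index in odd characteristic, so Lemma \ref{lm:QDfromParabolicWith2Sylow} is inapplicable, and (by the same code argument one node up) none of its parabolics has $2$-rank $5$ either; establishing $\QD_2$ for $\Spin_9(q)$ is a genuinely new task not covered by Propositions \ref{casePSL2} and \ref{prop:productJoinQDp}. For the other class, the central product $\SL_2(q)\circ\Sp_6(q)$ has $2$-rank $4$, not $5$ (only the outer extension can reach $5$), so the bookkeeping there is also off. Finally, the exclusions in the statement are not about degeneracies of Levi subgroups: $q=3$ and $q=9$ are excluded precisely because $G_2(3)$, and $G_2(9)$ extended by a field involution, are exceptions in Proposition \ref{caseG2} (and $\PGL_2(3)$ has nontrivial $2$-core), while $F_4(9)$ itself is fine because $\PGL_2(9)\times G_2(9)$ still satisfies $\QD_2$.
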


\begin{proof}
Suppose that $q\neq 3,9$ is an odd prime power.
Then $L$ contains a subgroup $H:=\PGL_2(q)\times G_2(q)$ (cf. the main result of \cite{LS2}).
Note that $H$ satisfies $\QD_2$ by Propositions \ref{prop:productJoinQDp}, \ref{casePSL2} and \ref{caseG2}.
Since both $L$ and $H$ have $2$-rank $5$ by Table \ref{tab:outerExceptionals}, we conclude that $L$ satisfies $\QD_2$.

Let $B \in \Outposet_2(L)$, so $B$ is generated by a field automorphism of order $2$.
Thus it acts by field automorphisms in a direct product subgroup isomorphic to $H$, which we may assume without loss of generality that it is our $H$.
Then $\widetilde{H} = \PGL_2(q)B \times G_2(q^{1/2})$, which is a subgroup of $HB$, satisfies $\QD_2$ by Propositions  \ref{prop:productJoinQDp}, \ref{casePSL2} and \ref{caseG2}.
Since $m_2(\widetilde{H}) = 6 = m_2(LB)$, we conclude that $LB$ also satisfies $\QD_2$.

We have shown that every possible $2$-extension of $L$ satisfies $\QD_2$, so $L$ satisfies (E-$\QD$).

If $q = 9$, then $\PGL_2(9)\times G_2(9)$ satisfies $\QD_2$ by Propositions \ref{prop:productJoinQDp}, \ref{casePSL2} and \ref{caseG2}.
Therefore, $F_4(9)$ satisfies $\QD_2$.
\end{proof}

\subsection{Cases \texorpdfstring{$E_6(q)$ and ${}^2E_6(q)$}{E6(q) and 2E6(q)}.}

\begin{proposition}
\label{caseE6}
Let $L = E^{\epsilon}_6(q)$ (any version), $\epsilon\in \{\pm 1\}$, and $q$ odd.
Then $L$ satisfies (E-$\QD$).
\end{proposition}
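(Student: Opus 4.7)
The plan is to exhibit, for each $B \in \HO(L)$, a subgroup of $LB$ of the same $2$-rank that satisfies $\QD_2$, and then invoke Lemma \ref{lm:inclusion}. Since the centre of $L$ has order dividing $(3, q - \epsilon)$, a $2'$-number, Lemma \ref{lm:pprimequotient} shows that $\A_2$ is unchanged across the isogeny versions, so I work with the adjoint form.

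The key subgroup is the maximal-rank subsystem subgroup $M_0 \leq L$ whose semisimple part has type $A_2^\epsilon(q)^3$, provided by \cite{LSS}; it has the shape $(\SL_3^\epsilon(q))^3/Z$ with $|Z|$ dividing $3$. Another application of Lemma \ref{lm:pprimequotient} to the $2'$-centre $Z$ gives $\widetilde{H}_*(\A_2(M_0)) = \widetilde{H}_*(\A_2(\PSL_3^\epsilon(q)^3))$, and each factor $\PSL_3^\epsilon(q)$ satisfies $\QD_2$ for every odd $q$ by Propositions \ref{casePSLOddDim}(1) and \ref{casePSU3}. Thus Proposition \ref{prop:productJoinQDp} yields $\QD_2$ for $M_0$, with $m_2(M_0) = 2+2+2 = 6 = m_2(L)$ by Table \ref{tab:outerExceptionals} and Proposition \ref{prop:2ranksSmallLinearGroups}. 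This settles the case $B = 1$ via Lemma \ref{lm:inclusion}.

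The remaining cases are handled by induction on the exponent $a$ in $q = r^a$. By Table \ref{tab:outerExceptionals} and the fact that $\Outdiag(L)$ is a $2'$-group, every nontrivial involution in $B$ is of graph, field, or graph-field type, and $|B| \leq 4$. If every involution in $B$ is graph-type (which requires $\epsilon = +$), then $C_L(t) \groupiso F_4(q)$ has $2$-rank $5$ for each such $t$, and Lemma \ref{lm:prankExtensions} gives $m_2(LB) = m_2(L) = 6 = m_2(M_0)$; applying Lemma \ref{lm:inclusion} to $M_0 \leq LB$ finishes this subcase. If $B$ contains a field or graph-field involution $\phi$, then $a$ is even, and by Proposition \ref{prop:centFieldAut} the group $O^{r'}(C_L(\phi))$ is isomorphic to $E_6^\epsilon(q^{1/2})$ in the field case or to ${}^2E_6(q^{1/2})$ in the graph-field case (when $\epsilon = +$), each of $2$-rank $6$. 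By the inductive hypothesis these satisfy $\QD_2$, so the subgroup $O^{r'}(C_L(\phi)) \times \langle \phi \rangle$ of $L\langle\phi\rangle \leq LB$ has $2$-rank $7 = m_2(LB)$ (by Lemma \ref{lm:prankExtensions}, noting that the cyclic group $\Phi$ contributes at most one independent field involution to any elementary abelian subgroup of $\Out(L)$) and satisfies $\QD_2$ by Proposition \ref{prop:productJoinQDp}, hence so does $LB$ by Lemma \ref{lm:inclusion}. Finally, in the mixed rank-$2$ case $B = \langle\phi,\gamma\rangle$ for $E_6(q)$, the subextension $L\langle\phi\rangle \leq LB$ already realises the full $2$-rank $7$, so $\QD_2$ transfers to $LB$ by Lemma \ref{lm:inclusion}.

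The main obstacle will be verifying that $M_0$ can be chosen $B$-invariant in the graph-only case, which reduces to the well-known $S_3$-normalizer structure of the $3A_2^\epsilon$ subsystem in $E_6^\epsilon$ together with compatibility of the graph automorphism of $L$, and ensuring that Proposition \ref{prop:centFieldAut} correctly tracks the twist change $\epsilon \leftrightarrow -\epsilon$ in the graph-field case. The induction is otherwise routine, as it terminates after finitely many halvings of $a$, once $|\Phi|$ has trivial $2$-part and no further field or graph-field involutions can arise.
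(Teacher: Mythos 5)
Your argument breaks down precisely at the graph-automorphism cases, which is where the real content of the proposition lies. It is not true that every graph involution $t$ of $E^\epsilon_6(q)$ has $C_L(t)\groupiso F_4(q)$: by Table 4.5.1 of \cite{GLS98} there are two classes of graph involutions, one with centraliser of type $F_4$ (of $2$-rank $5$) and one with centraliser of type $C_4$, and the latter has $2$-rank $6$. One can see this directly, as the paper does: the $A_5$-Levi $\GL_6^\epsilon(q)$ (modulo an odd centre) is normalised by a graph involution $g$ acting as $x\mapsto (x')^{-1}$, and $C_{\GL_6(q)}(g)=\GO_6(q)$ contains the rank-$6$ group of diagonal sign matrices, so together with $g$ one gets an elementary abelian group of rank $7$. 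Hence $m_2(L\gen{\gamma})=7$, not $6$, and similarly $m_2(L\gen{\phi,\gamma})=8$, not $7$ (these values are exactly what the paper later uses in the $E_8(q)$ proof). Consequently your appeal to Lemma \ref{lm:inclusion} fails both in the graph-only case (your $M_0$ has $2$-rank $6<7=m_2(LB)$) and in the mixed case (your $L\gen{\phi}$ has $2$-rank $7<8=m_2(LB)$); note also that even if you choose $\gamma$ in the $F_4$-class, the coset $L\gamma$ still contains $C_4$-type involutions, so the rank of the extension is $7$ regardless.

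Two further issues. First, under the \cite{GLS98} conventions adopted here, the outer involutions of ${}^2E_6(q)$ are graph automorphisms and exist for every odd $q$, not only for square $q$, again with $F_4$- and $C_4$-type centralisers; your case division into field/graph-field involutions with induction on halving $a$ therefore never treats these $2$-extensions for $\epsilon=-1$. Second, your base subgroup for $\epsilon=-1$ needs justification: the natural $F$-form of the $3A_2$ subsystem subgroup of ${}^2E_6(q)$ has shape $\SL_3(q^2)\times\SU_3(q)$ up to centres, of $2$-rank $4$, so an $\SU_3(q)^3$-shaped subgroup of $2$-rank $6$ cannot simply be quoted from \cite{LSS} without checking the relevant twisted form exists. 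Your $B=1$ argument for split $E_6(q)$ via the $3A_2$ subgroup is a valid alternative to the paper's route, but for the outer cases you need either a $B$-invariant subgroup realising the correct ranks $7$ and $8$ that satisfies $\QD_2$, or the paper's actual mechanism: the $B$-invariant $A_5$-parabolic $P=U\,\GL_6^\epsilon(q)/Z(L_u)$ with $m_2(PB)=m_2(LB)$ for every $B$, combined with the Borel--Tits argument of Lemma \ref{lm:QDfromParabolicWithMxlRank}.
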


\begin{proof}
Let $L = E^\epsilon_6(q)$ in adjoint version, where $\epsilon \in\{\pm 1\}$.
For a $2$-extension $LB$ of the adjoint version $L$, we see that $m_2(LB) = m_2(L_u\widetilde{B})$, where $L_u$ is the universal version of $E^{\epsilon}_6(q)$ and $\widetilde{B}$, isomorphic to $B$, is just a lift of the action of $B$ on $L_u$ (this is possible since $Z(L_u) = (3,q-\epsilon)$ is odd).
Thus $LB=L_u\widetilde{B}/Z(L_u)$, and by Lemma \ref{lm:pprimequotient}, $\widetilde{H}_*(\A_2(LB)) \subseteq \widetilde{H}_*(\A_2(L_uB))$.
Therefore, if $L$ satisfies (E-$\QD$), then so does the universal version of $E^{\epsilon}_6(q)$.

We will show that there exists a parabolic subgroup $P$ of $L$ such that for any $2$-extension $LB$, a suitable conjugate of $P$ is normalised by $B$ (so we can suppose it is $P$ itself), and $m_2(PB) = m_2(LB)$.

This parabolic subgroup $P$ arises from the $A_5$ subdiagram in $E_6$, so $P = U \GL_6^\epsilon(q)/Z(L_u)$, where $\GL_6^\epsilon(q)/Z(L_u)$ denotes the Levi complement.
Then $m_2(P) = 6$, which realises the $2$-rank of $L$.
Furthermore, a graph, graph-field or field automorphism of $L$ (the last two only for $\epsilon=1$) stabilises this subdiagram (and hence $P$), inducing a graph (resp. graph-field or field) automorphism on $\GL_6^\epsilon(q)/Z(L_u)$.
Denote by $t$ such automorphism.
Then $m_2(L\gen{t}) \leq m_2(L)+1=7$.
We claim that
\begin{equation}
    \label{eq:E6ranksParabolics}
     m_2(P\gen{t}) = m_2(\GL^\epsilon_6(q)\gen{t}) = 7 = m_2(L\gen{t}). 
\end{equation}
Note that $m_2(P\gen{t}) = m_2(\GL_6^\epsilon(q)\gen{t})$, for the lifted action of $t$ on $\GL_6^\epsilon(q)$.
Then it is clear that Eq. (\ref{eq:E6ranksParabolics}) holds if $t$ induces a field automorphism (so $\epsilon = 1$), since the stabiliser of $t$ in $\GL_6(q)$ is $\GL_6(q^{1/2})$.
Similarly, if $t$ is a graph-field automorphism then $\epsilon = 1$ and $C_{\GL_6(q)}(t) = \GU_6(q^{1/2})$, which has $2$-rank $6$.
Then, in these two situations, $m_2(P\gen{t}) = 7$.

Now assume that $t$ is a graph involution.
For $\epsilon = 1$, $t$ acts on $\GL_6(q)$, so $\GL_6(q)\gen{t}$ contains a graph automorphism $g$ inducing the map $x\mapsto (x')^{-1}$, where $x'$ denotes the transpose of $x$.
Therefore, $C_{\GL_6(q)}(g) = \GO_6(q)$, which has $2$-rank $6$.
This implies that $m_2(\GL_6(q)\gen{t}) = 6$.
If $\epsilon = -1$, $t$ is a graph involution acting on $\GU_6(q)$, so up to conjugation $t$ is indeed the map $x\mapsto x^q$.
Therefore, $C_{\GU_6(q)}(t) = \GO_6(q)$, and again we get $m_2(\GU_6(q)\gen{t}) = 6$.
In any case, we see that $m_2(P\gen{t}) = 7$.

Finally, suppose that we have $B = \gen{\phi,\gamma}$, where $\phi$ is a field automorphism of order $2$ and $\gamma$ a graph automorphism of order $2$ of $L = E_6(q)$.
We can suppose that $B$ stabilises $P$ (and thus its unipotent radical), and its Levi complement $\GL_6(q)$.
Thus, $\gamma$ acts as a graph automorphism on the stabiliser of $\phi$ in $\GL_6(q)$, which is isomorphic to $\GL_6(q^{1/2})$.
As we saw above, $m_2(\GL_6(q^{1/2})\gen{\gamma}) = 7$.
Therefore, $m_2(\GL_6(q)B) = 8$.
Since $m_2(B) = 2$ and $m_2(E_6(q)) = 6$, we conclude that $m_2(E_6(q)B) = 8$, so the $2$-rank is realised in $PB$.

To conclude, note that a $2$-extension of $L$ is one of:
\begin{enumerate}
    \item $L$, of $2$-rank $6 = m_2(P)$,
    \item $L\gen{\gamma}$ of $2$-rank $7$, with $\gamma$ a graph automorphism of order $2$, which also stabilises $P$ and $m_2(P\gen{\gamma}) = 7$,
    \item $L\gen{\phi}$ of $2$-rank $7$, with $\phi$ a field automorphism of order $2$ ($\epsilon=1$), which also stabilises $P$ and $m_2(P\gen{\phi}) = 7$,
    \item $L\gen{\gamma\phi}$ of $2$-rank $7$, with $\gamma\phi$ a graph-field automorphism of order $2$ ($\epsilon=1$), which also stabilises $P$ and $m_2(P\gen{\gamma\phi}) = 7$,
    \item $L\gen{\gamma,\phi}$ of $2$-rank $8$, with $\phi$ a field automorphism of order $2$ ($\epsilon=1$) commuting with $\gamma$ a graph automorphism of order $2$, and $\gen{\gamma,\phi}$ also stabilises $P$ with $m_2(P\gen{\gamma,\phi}) = 8$.
\end{enumerate}

From this, we conclude that any $2$-extension of the adjoint versions of $E_6^\epsilon(q)$ satisfies $\QD_2$.
By the remark at the beginning of the proof, we conclude that any version of $E^\epsilon_6(q)$ satisfies (E-$\QD$).
\end{proof}

\subsection{Case \texorpdfstring{$E_7(q)$}{E7(q)}.}

\begin{proposition}
Let $L = E_7(q)$ (adjoint version), with $q$ odd.
Then $L$ satisfies (E-$\QD$).
\end{proposition}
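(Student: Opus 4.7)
The plan parallels the $E_6(q)$ argument but exploits the additional rigidity supplied by Lemma~\ref{lm:fieldAndDiag}. By Table~\ref{tab:outerExceptionals}, $\Out(L) = \Outdiag(L) \times \Phi$ with $\Outdiag(L) \cong \Cyclic_2$ and $\Phi = \Aut(\GF{q})$ cyclic; since $E_7(q)$ falls under the hypothesis of Lemma~\ref{lm:fieldAndDiag}, a field involution of $\Aut(L)$ cannot commute with any diagonal involution of $\Inndiag(L)$. Hence every nontrivial $B \in \HO(L)$ is cyclic of order $2$, generated either by a diagonal involution $d$ or, when $q$ is a square, by a field involution $\phi$, and the possible $2$-extensions reduce to the three cases $L$, $L\gen{d}$, and $L\gen{\phi}$.

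To handle $L$ and $L\gen{d}$, I would apply Lemma~\ref{lm:QDfromParabolicWithMxlRank} with a maximal parabolic $P \leq L$ whose Levi complement realises $m_2(L) = 8$ and which is normalised (after conjugation) by $d$, so that $m_2(P\gen{d}) = m_2(L\gen{d})$. Following the template used for $E_6$ (where the author used the $A_5$-Levi parabolic), the natural candidates here are the parabolics arising from rank-$6$ subdiagrams of the $E_7$ Dynkin diagram: in particular the $D_5 + A_1$ parabolic, whose Levi derives from $\Spin_{10}^+(q) \times \SL_2(q) \cdot T_1(q)$, or the $D_6$ parabolic, whose Levi is essentially $\Spin_{12}^+(q) \cdot T_1(q)$ modulo the central isogeny. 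Once the appropriate parabolic is identified, Lemma~\ref{lm:QDfromParabolicWithMxlRank} produces a solvable subgroup $UA$ (unipotent radical extended by the realising elementary abelian $2$-subgroup) with trivial $2$-core, hence satisfying $\QD_2$ by Theorem~\ref{thm:solvable}.

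For $L\gen{\phi}$, I would proceed by induction on $q$. By Proposition~\ref{prop:centFieldAut}, $C_L(\phi) \cong E_7(q^{1/2})$ has $2$-rank $8$, so $C_L(\phi) \times \gen{\phi}$ is a subgroup of $L\gen{\phi}$ of $2$-rank $9 = m_2(L\gen{\phi})$. The inductive hypothesis gives $\QD_2$ for $E_7(q^{1/2})$ — the base case being a non-square $q$, handled by the parabolic argument above — and then Proposition~\ref{prop:productJoinQDp} together with Lemma~\ref{lm:inclusion} transfers $\QD_2$ to $L\gen{\phi}$. The main obstacle I anticipate is the exact verification that the chosen Levi has $2$-rank precisely $8$ in the adjoint quotient (central isogenies in $E_7$ can shift $2$-ranks by identifying central involutions of the simple factors) and that $d$ extends the Levi's $2$-rank by exactly one; working through the centraliser data for diagonal involutions of $E_7$ in Table~4.5.1 of~\cite{GLS98}, combined with Lemma~\ref{lm:prankExtensions}, is the decisive bookkeeping step.
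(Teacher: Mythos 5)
Your reduction of the problem to the three extensions $L$, $L\gen{d}$ and $L\gen{\phi}$ (via Lemma \ref{lm:fieldAndDiag}) matches the paper, and your plan for $L$ and $L\gen{d}$ via the $D_6$ parabolic and Lemma \ref{lm:QDfromParabolicWithMxlRank} is essentially the paper's route (the paper uses $P = U:(D_6(q).(q-1))$ with Levi containing $M = \HSpin_{12}^+(q).2$, and checks $m_2(L_P)=8$ by locating $M$ inside $C_L(t_1) = (\SL_2(q)\circ_2 \HSpin_{12}^+(q)).2$ for $t_1$ the central involution of a Sylow $2$-subgroup). One small correction to your bookkeeping caveat: $d$ does \emph{not} raise the $2$-rank; the paper shows $m_2(\Inndiag(E_7(q))) = 8 = m_2(E_7(q))$ by realising $\Inndiag(E_7(q))$ as $C_{E_7(q^2)}(\phi')$ for a field involution $\phi'$ (using Proposition \ref{prop:centFieldAut} and Lemma \ref{lm:fieldAndDiag}), so the case $L\gen{d}$ follows from $L$ and Lemma \ref{lm:inclusion} alone, with no parabolic needed there.

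The genuine gap is your treatment of $L\gen{\phi}$. The subgroup $C_L(\phi)\times\gen{\phi}$ has $\phi$ as a central involution, so $O_2(C_L(\phi)\times\gen{\phi})\neq 1$ and $\A_2(C_L(\phi)\times\gen{\phi})\simeq \A_2(C_L(\phi))*\A_2(\gen{\phi})$ is a cone, hence contractible; this subgroup therefore \emph{fails} $\QD_2$, and neither Proposition \ref{prop:productJoinQDp} (which requires both factors to satisfy $\QD_2$, but $\gen{\phi}\cong\Cyclic_2$ does not, since $\A_2(\Cyclic_2)$ is a point) nor Lemma \ref{lm:inclusion} (which requires the subgroup itself to satisfy $\QD_2$) can transfer anything to $L\gen{\phi}$, regardless of any induction on $q$. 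Realising the $2$-rank is not enough: one needs a realising subgroup with trivial $2$-core, which is exactly what the paper arranges by showing that $\phi$ normalises the $D_6$ parabolic $P$ and induces a field automorphism on $M$, so that $C_M(\phi)\supseteq \HSpin_{12}^+(q^{1/2}).2$ gives $m_2(P\gen{\phi}) = 9 = m_2(L\gen{\phi})$ and Lemma \ref{lm:QDfromParabolicWithMxlRank} (via the Borel--Tits faithful action on the unipotent radical) applies. Alternatively one could try the long exact sequence of Lemma \ref{lm:MVLES} with the conjugates of $C_L(\phi)$, as the paper does for $G_2$ and the small linear groups, but some replacement of this kind is needed: as written, your argument for the field-automorphism extension does not prove $\QD_2$.
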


\begin{proof}
Let $L = E_7(q)$.
By Table \ref{tab:outerExceptionals}, if $\phi$ denotes a field automorphism of order $2$ of $L$, the $2$-extensions of $L$ are:
\[L, \quad \Inndiag(L), \quad L\gen{\phi}.\]
Note that $\Inndiag(L)\gen{\phi}$ is not a $2$-extension since field and diagonal automorphisms of order $2$ do not commute in view of Lemma \ref{lm:fieldAndDiag}.

Next, we study the $2$-ranks of these extensions, so we need to understand the centralisers of the outer involutions.
From Table \ref{tab:outerExceptionals}, $m_2(L) = 8$.
We claim that $m_2(\Inndiag(L)) = 8 = m_2(L)$.
Indeed, consider $K = E_7(q^2)$ in adjoint version.
Then $m_2(K) = 8$.
Let $\phi'$ be a field automorphism of order $2$ for $K$.
Then, by Proposition \ref{prop:centFieldAut} and Lemma \ref{lm:fieldAndDiag},
\[ K \geq C_K(\phi') = C_{\Inndiag(K)}(\phi') = \Inndiag(E_7(q)) \groupiso \Inndiag(L).\]
From this we see that $m_2(\Inndiag(L)) = 8 = m_2(L)$.
In particular, $\Inndiag(L)$ satisfies $\QD_2$ if $L$ does.
Moreover, this also proves that if $\phi$ is a field automorphism of order $2$ for $L$ then
\[ m_2(\Inndiag(L)\gen{\phi}) = 9 = m_2(L\gen{\phi}).\]

From these observations, we conclude that, in order to establish (E-$\QD$) for $E_7(q)$, it is enough to show that $E_7(q)$ and $E_7(q)\gen{\phi}$ satisfy $\QD_2$.

To this end, we exhibit a maximal parabolic subgroup of $E_7(q)$ of $2$-rank $8$.
We see that $D_6$ is a subdiagram of $E_7$, so we have a maximal parabolic subgroup in $E_7(q)$ of the form
\[ P = U : (D_6(q).(q-1)).\]
Here $U$ denotes the unipotent radical of $P$, and the subgroup $H = D_6(q)$ is a quotient of $\Spin_{12}^+(q)$ by a central subgroup of order $2$.
Indeed, $H = \HSpin_{12}^+(q)$ and it lies in the centraliser of the involution that generates the centre of a Sylow $2$-subgroup $T$ of $L$ (see the $t_1$ involution of the $E_7(q)$ entry in Table 4.5.1 of \cite{GLS98}).
From this, we show that the Levi complement $L_P = D_6(q).(q-1)$ of $U$ has $2$-rank $8$.
Let $t$ be the involution in the centre of $L_P$.
Then $C_L(t) = (\SL_2(q) \circ_2 \HSpin_{12}^+(q)).2$ by Table 4.5.1 of \cite{GLS98}.
Since $t\in Z(T)$, $T\leq C_L(t)$.
Also, $\SL_2(q)$ has a unique involution, so the $2$-rank of $T$ is realised in a subgroup of the extension $M := \HSpin_{12}^+(q).2$.
The $2$ here at the end comes from diagonal automorphisms of the half-spin group, as in the Levi complement above.
Therefore, if we identify $M$ as a subgroup of $L_P$, we conclude that $m_2(L_P) = m_2(M) = m_2(E_7(q))$.

Moreover, after suitable choices of conjugates, a field automorphism $\phi$ of order $2$ must normalise $P$ and act as a field automorphism on our $M$.
Since $C_M(\phi)$ contains a subgroup isomorphic to $\HSpin_{12}^+(q^{1/2}).2$, we see that $P\gen{\phi}$ has $2$-rank $9$, which is the $2$-rank of the $2$-extension $E_7(q)\gen{\phi}$.

By Proposition \ref{lm:QDfromParabolicWithMxlRank}, $L$ and $L\gen{\phi}$ satisfy $\QD_2$.
Finally, by the previous discussion, we conclude that $L$ satisfies (E-$\QD$).
\end{proof}

\subsection{Case \texorpdfstring{$E_8(q)$}{E8(q)}.}

\begin{proposition}
\label{caseE8}
The simple group $E_8(q)$, $q\neq 3,9$ odd, satisfies (E-$\QD$).
Also $E_8(9)$ satisfies $\QD_2$.
\end{proposition}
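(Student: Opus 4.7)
Let $L = E_8(q)$. By Table \ref{tab:outerExceptionals}, $m_2(L) = 9$, $\Outdiag(L) = 1$, and $\Out(L) = \Phi$ is cyclic of field automorphisms, so the only possible $2$-extensions are $L$ itself and $L\gen{\phi}$ with $\phi$ a field involution (which requires $q$ to be a square). By Proposition \ref{prop:centFieldAut}, $C_L(\phi) \groupiso E_8(q^{1/2})$ has $2$-rank $9$, so $m_2(L\gen{\phi}) = 10$. Hence (E-$\QD$) for $q \neq 3,9$ reduces to proving $\QD_2$ for $L$ and for $L\gen{\phi}$, while for $q = 9$ only the assertion for $L$ itself is needed.

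The plan is to follow the $E_7(q)$ argument essentially verbatim and exhibit a maximal parabolic $P = U : L_P$ of $L$ whose Levi realises $m_2(L)$. The natural candidate is the $E_7$-parabolic, with $L_P \groupiso E_7(q)_u . (q-1)$, where the toral factor $(q-1)$ induces a diagonal automorphism on the universal group $E_7(q)_u$. In analogy with the $E_7(q)$ case---where the Levi $\HSpin_{12}^+(q) . (q-1)$ realises $m_2(E_7(q)) = 8$---the verification of $m_2(L_P) = 9$ proceeds by picking an involution $t \in Z(L_P)$, reading off $C_L(t)$ from the $E_8(q)$ entries of Table 4.5.1 in \cite{GLS98}, and identifying inside $L_P$ a subgroup of the form $E_7(q)_u . 2$ (with the $.2$ the diagonal automorphism extracted from the torus) whose $2$-rank is $9$.

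Once the Levi rank is verified, Lemma \ref{lm:QDfromParabolicWithMxlRank} closes both open cases. For $B = 1$ it gives $\QD_2$ for $L$ whenever $q \neq 3$, covering both $q \neq 3,9$ and the separate assertion for $q = 9$. For $B = \gen{\phi}$ with $\phi$ a field involution ($q$ a square, $q \neq 9$), a standard algebraic-group argument, as in the $E_6(q)$ case, lets us conjugate so that $\phi$ normalises $P$ and acts as a field automorphism on $L_P$; then $C_{L_P}(\phi) \supseteq E_7(q^{1/2})_u . (q^{1/2}-1)$ has $2$-rank $9$, so $m_2(P\gen{\phi}) = 10 = m_2(L\gen{\phi})$, and Lemma \ref{lm:QDfromParabolicWithMxlRank} applies once more. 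The argument breaks down precisely at $q = 3$ (for $L$) and at $q = 9$ (for $L\gen{\phi}$), because in those two situations the toral factor $(q-1)$, resp. $(q^{1/2}-1)$, becomes $\Cyclic_2$ and collapses into the central involution of the universal $E_7$-factor, no longer supplying a diagonal automorphism of rank-boosting type; this is consistent with the exceptional rows of Theorem \ref{thm:main}.

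The principal obstacle is the $2$-rank computation for $L_P$: the central identification in $L_P \groupiso E_7(q)_u . (q-1)$ threatens to lose a rank unit, and one must confirm that the diagonal automorphism induced by the torus recovers exactly one rank back, so that $m_2(L_P) = m_2(E_7(q)_u) + 1 = 9$. This amounts to extracting the correct involution class from Table 4.5.1 of \cite{GLS98} and tracking the interaction between $Z(E_7(q)_u) = \Cyclic_2$ and the unique $2$-element of $(q-1)$, completely parallel to the corresponding step carried out for $E_7(q)$.
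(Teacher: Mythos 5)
Your reduction to the two extensions $L$ and $L\gen{\phi}$ and the computation $m_2(L\gen{\phi})=10$ are fine, but the heart of your argument --- that the $E_7$-parabolic $P=U:L_P$ of $E_8(q)$ satisfies $m_2(L_P)=9=m_2(L)$, ``completely parallel'' to the $D_6$-parabolic step for $E_7(q)$ --- is exactly the point you leave unproved, and it does not hold. Since $U$ has odd order, every elementary abelian $2$-subgroup of $P$ is conjugate into the Levi $L_P\cong (E_7(q)_u\circ_2\Cyclic_{q-1}).2$, and $L_P/\Cyclic_{q-1}\cong\Inndiag(E_7(q))$ has $2$-rank $8$; hence $m_2(L_P)\le 9$, with equality only if some elementary abelian $2^8$ of $\Inndiag(E_7(q))$ has abelian full preimage in $L_P$, equivalently only if a rank-$9$ elementary abelian subgroup of $E_8(q)$ centralises the involution of the central torus $\Cyclic_{q-1}$ and so lies in the $E_7A_1$-type involution centraliser $(E_7(q)_u\circ_2\SL_2(q)).2$. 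This is where the analogy with $E_7(q)$ breaks: there the $D_6$-Levi works because $m_2(\HSpin_{12}^+(q).2)=8=m_2(E_7(q))$, whereas here the rank-$8$ subgroups of $\Inndiag(E_7(q))$ (arising inside $\HSpin_{12}^+(q).2$) do not lift abelianly, the rank $9$ of $E_8(q)$ is attained only at the $D_8$-type centraliser and not inside any proper Levi, and so no parabolic realises the $2$-rank. This is precisely why the paper abandons parabolics for $E_8$ (the introduction says parabolics are used ``when it is possible''): its proof takes the maximal-rank subgroup $(3,q-1).(\PSL_3(q)\times E_6(q)).(3,q-1).2$ from Table 5.1 of \cite{LSS}, realises the $2$-rank by $K_0\cong\PGL_2(q)\times\Inndiag(E_6(q))\gen{\gamma}$ (rank $2+7=9$) and, for the field extension, by $K_1$ of rank $10$, and concludes with Propositions \ref{prop:productJoinQDp}, \ref{casePSL2}, \ref{caseE6} and Lemmas \ref{lm:pprimequotient}, \ref{lm:inclusion}.

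Independently of the rank question, your own treatment of the exceptional cases shows the argument cannot be correct as stated. Lemma \ref{lm:QDfromParabolicWithMxlRank} has no hypothesis on $q$, so if your rank identities held for all $q$ they would also prove $\QD_2$ for $E_8(3)$ and for $E_8(9)\gen{\phi}$, the cases deliberately left open in Theorem \ref{thm:main}. The mechanism you offer for the breakdown at $q=3$ and $q=9$ --- the $2$-part of the toral factor $\Cyclic_{q-1}$ (resp. $\Cyclic_{q^{1/2}-1}$) collapsing onto $Z(E_7(q)_u)$ --- is not a hypothesis of that lemma, and it occurs for every $q\equiv 3\pmod 4$ (resp. whenever $q^{1/2}\equiv 3\pmod 4$); by your own reasoning the proof would then be incomplete for $q=7,11,19,\dots$ (resp. $q=49,121,\dots$), values the proposition must cover. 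So either the rank identity is uniform in $q$ (and your exceptions are unexplained) or it fails on a whole congruence class (and the statement is not proved); in fact it fails for all $q$, as above. In the paper's argument the true source of the exceptions is localised and visible: $K_0$ has a $\PGL_2(q)$ factor and $K_1$ a $\PGL_2(q^{1/2})$ factor, and Proposition \ref{casePSL2} fails exactly in the $\PGL_2(3)$-type configurations; this is also why $E_8(9)$ itself, via $\PGL_2(9)\times\Inndiag(E_6(9))\gen{\gamma}$, still satisfies $\QD_2$.
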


\begin{proof}
Let $L = E_8(q)$.
By Table 5.1 of \cite{LSS}, $L$ contains a maximal subgroup
\[ H\groupiso (3,q-1).(\PSL_3(q)\times E_6(q)).(3,q-1).2.\]
Note that $F^*(H) = (3,q-1).(\PSL_3(q)\times E_6(q))$, and
$H_+ := H/Z(F^*(H)) = (\PSL_3(q)\times E_6(q)).(3,q-1).2$, where $(3,q-1)$ induces diagonal automorphism on each component of $H_+$, and the $2$ induces a graph involution, also acting on both components.
In particular, by taking the centraliser of a graph involution on the $\PSL_3(q)$ component, we see that $H_0$ contains a subgroup $K_0$ isomorphic to
\[ \PGL_2(q) \times \Inndiag(E_6(q))\gen{\gamma},\]
where $\gamma$ is a graph involution of $E_6(q)$ centralising $\PGL_2(q)$.
Now, recall that $m_2(L) = 9$ and $m_2(\PGL_2(q)) = 2$.
Since $ m_2(E_6(q)\gen{\gamma})  = 7$ by item (2) of the proof of Proposition \ref{caseE6}, we see that
\[ m_2(K_0) = m_2(\PGL_2(q)) + m_2(E_6(q)\gen{\gamma}) = 2 + 7 = 9 = m_2(L).\]
Therefore $K_0$ realises the $2$-rank of $L$.

By Table \ref{tab:outerExceptionals}, $E_8(q)$ extended by a field automorphism of order $2$, say $\phi$, is the unique nontrivial $2$-extension.
From the construction of the maximal subgroup $H$ and $K_0$ (cf. \cite{LSS}), we can pick a suitable $L$-conjugate of $\phi$ (and we suppose it is the same $\phi$) such that it normalises $H$ and, after passing to the quotient, normalises $K_0$ and induces a field automorphism on both factors of $K_0$.
In particular, we have a subgroup $K_1$ of $K_0 \gen{\phi}$ of the form
\[ \PGL_2(q^{1/2}) \times \Inndiag(E_6(q)) \gen{\gamma',\phi},\]
where we have chosen $\gamma'\in \Inndiag(E_6(q))\gen{\gamma}$ to be a graph automorphism commuting with $\phi$, and $\PGL_2(q^{1/2}) = C_{\PGL_2(q)}(\phi)$.
Therefore, by item (5) in the proof of Proposition \ref{caseE6},
\[ m_2(K_1) = 2 + m_2( E_6(q) \gen{\gamma',\phi} ) = 2 + 8 = 10.\]
Since $m_2(L\gen{\phi}) \leq m_2(L)+1=10$, we conclude that $m_2(K_1) = m_2(L\gen{\phi})$.

Finally, note that $K_0$ and $K_1$ satisfy $\QD_2$ if $q\neq 3,9$ respectively, by Propositions \ref{casePSL2}, \ref{caseE6} and \ref{prop:productJoinQDp}.
Hence, by Lemmas \ref{lm:pprimequotient} and \ref{lm:inclusion}, $L$ and $L\gen{t}$ satisfy $\QD_2$ if $q\neq 3,9$ respectively.

Therefore every $2$-extension of $E_8(q)$ satisfies $\QD_2$, with the exceptions given in the statement.
This concludes the proof of the proposition.

\end{proof}

\bibliographystyle{abbrv}
\bibliography{references}

\end{document}